\newtheorem{thm}{Theorem}[section]
\newtheorem{prop}[thm]{Proposition}
\newtheorem{lem}[thm]{Lemma}
\newtheorem{cor}[thm]{Corollary}
\theoremstyle{definition}
\newtheorem{defn}[thm]{Definition}
\theoremstyle{remark}
\newtheorem{remk}[thm]{Remark}
\newtheorem{remks}[thm]{Remarks}
\newtheorem{exm}[thm]{Example}
\newtheorem{exms}[thm]{Examples}
\newtheorem{notat}[thm]{Notation}
\numberwithin{equation}{section}
\newcommand{\thmref}{Theorem~\ref}
\newcommand{\lemref}{Lemma~\ref}
\newcommand{\sC}{{\mathcal C}}
\newcommand{\sD}{{\mathcal D}}
\newcommand{\sK}{{\mathcal K}}
\newcommand{\sO}{{\mathcal O}}
\newcommand{\sT}{{\mathcal T}}
\newcommand{\sW}{{\mathcal W}}
\newcommand{\A}{{\mathbb A}}
\renewcommand{\H}{{\mathbb H}}
\renewcommand{\P}{{\mathbb P}}
\newcommand{\CH}{{\rm CH}}
\newcommand{\inj}{\hookrightarrow}
\newcommand{\red}{{\rm red}}
\newcommand{\codim}{{\rm codim}}
\newcommand{\Hom}{{\rm Hom}}
\newcommand{\Spec}{{\rm Spec \,}}
\newcommand{\Sch}{{\operatorname{\mathbf{Sch}}}}
\newcommand{\Sm}{{\mathbf{Sm}}}
\newcommand{\SmProj}{{\mathbf{SmProj}}}
\newcommand{\Ab}{{\mathbf{Ab}}}
\newcommand{\End}{{\operatorname{\text{End}}}}
\newcommand{\ds}{{/\kern-3pt/}}
\newcommand{\Supp}{{\operatorname{Supp}}}
\newcommand{\Cor}{{\operatorname{Cor}}}
\renewcommand{\TH}{{\operatorname{TCH}}}
\newcommand{\un}{\underline}
\newcommand{\ov}{\overline}
\newcommand{\dgn}{{\operatorname{degn}}}
\renewcommand{\dim}{\text{\rm dim}}
\newcommand{\tuborg}{\left\{\begin{array}{ll}}
\newcommand{\sluttuborg}{\end{array}\right.}
\newcommand{\mot}{{\rm Mot}}
\begin{document}
\title[Cycles with modulus]{A module structure and a vanishing theorem for cycles with modulus}
\author{Amalendu Krishna and Jinhyun Park}
\address{School of Mathematics, Tata Institute of Fundamental Research,  
1 Homi Bhabha Road, Colaba, Mumbai, India}
\email{amal@math.tifr.res.in}
\address{Department of Mathematical Sciences, KAIST, 291 Daehak-ro Yuseong-gu, 
Daejeon, 34141, Republic of Korea (South)}
\email{jinhyun@mathsci.kaist.ac.kr; jinhyun@kaist.edu}

\keywords{algebraic cycle, $K$-theory}

\subjclass[2010]{Primary 14C25; Secondary 19E15, 13F35}
\begin{abstract}
We show that the higher Chow groups with modulus of Binda-Kerz-Saito for a smooth quasi-projective scheme $X$ is a module over the Chow ring of $X$. From this, we deduce certain pull-backs, the projective bundle formula, and the blow-up formula for higher Chow groups with modulus.

We prove vanishing of $0$-cycles of higher Chow groups with modulus on various affine varieties of dimension at least two. This shows in particular that the multivariate analogue of Bloch-Esnault--R\"ulling computations of additive higher Chow groups of 0-cycles vanishes.
\end{abstract}

\maketitle


\section{Introduction}
Recently, algebraic cycles with certain constraints at infinity, called modulus conditions, are drawing attentions. These cycles with modulus originate from the work of S. Bloch and H. Esnault in \cite{BE1}, where the first such groups were defined. They computed the $0$-cycle groups to give a motivic interpretation of the absolute K\"ahler forms of a field. Notably the subject of additive higher Chow groups emerged from there and it was studied in \cite{KL}, \cite{KP}, \cite{KP3}, \cite{KP2}, \cite{P1}, \cite{P2} and \cite{R}. 

Continuing these, in 2010 the authors began to study a generalization, that we call \emph{multivariate additive higher Chow groups}, a glimpse of which appeared in \cite{KP2}. Meanwhile, F. Binda, M. Kerz and S. Saito in \cite{BS}, \cite{KS} defined more general objects, called \emph{higher Chow groups with modulus} $\CH^q (X|D, n)$ of a scheme $(X,D)$ with an effective Cartier divisor. 

This paper is a result of fitting the studies of multivariate additive higher Chow groups into this new environment. We prove the following results in this note.

\begin{thm}Let $X$ be a smooth quasi-projective scheme over a field $k$. Then, there is a cap product $\cap_X: \CH^r (X, n_1) \otimes \CH_s (X|D, n_2) \to \CH_{s-r}(X|D, n_1 + n_2)$. 
\end{thm}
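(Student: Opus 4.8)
The plan is to realize the cap product as the composite of an external product of cycles followed by a pullback along the diagonal $\delta \colon X \to X \times X$, exactly as in Bloch's construction of the intersection product on higher Chow groups, but carrying the modulus condition through every step. Concretely, write $z^r(X, \bullet)$ for Bloch's cubical complex computing $\CH^r(X, n)$ and $z_s(X|D, \bullet)$ for the Binda--Kerz--Saito complex computing $\CH_s(X|D, n)$, and work first at the level of these complexes, descending to homology only at the end. The external product sends an admissible cycle $\alpha$ on $X \times \sq^{n_1}$ and an admissible-with-modulus cycle $\beta$ on $X \times \sq^{n_2}$ to the product cycle $\alpha \times \beta$ on $(X \times X) \times \sq^{n_1 + n_2}$ (after reordering cube coordinates), giving a map
\[
z^r(X, n_1) \otimes z_s(X|D, n_2) \longrightarrow z_{s - r + \dim X}\big((X \times X) \,\big|\, (X \times D),\, n_1 + n_2\big),
\]
where the divisor on the target is $X \times D = \mathrm{pr}_2^{-1}(D)$, an effective Cartier divisor since $D$ is Cartier and $\mathrm{pr}_2$ is smooth. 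A quick dimension count shows the external product lands in the indicated degree, and admissibility of $\alpha \times \beta$ with respect to all faces follows because faces of the product cube are products of faces.

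The first substantive point is that $\alpha \times \beta$ again satisfies the modulus condition, now relative to $X \times D$. This is a containment statement: on the normalization of the closure $\ov{\alpha \times \beta}$ in $(X \times X) \times (\P^1)^{n_1 + n_2}$, the pullback of $X \times D$ factors through the second factor and hence through $\beta$, while $\alpha$ imposes no constraint at infinity on the first factor. I would prove this by comparing the normalizations of $\ov{\alpha \times \beta}$ and of $\ov\beta$ through the projection and checking that the inequality of divisors defining the modulus condition for $\beta$ propagates, using stability of the modulus condition under the relevant flat pullbacks.

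Next comes the pullback along $\delta$. Since $X$ is smooth, $\delta$ is a regular closed immersion, $\delta^{*}(X \times D) = D$, and intersection with $\delta(X) \times \sq^{n_1+n_2}$ lowers dimension by $\dim X$, so $\delta^{*}(\alpha \times \beta)$ has the correct degree $s - r$. The hard part will be making this intersection well defined and showing it preserves modulus. For the intersection to be a cycle compatible with faces, $\alpha \times \beta$ must meet $\delta(X) \times F$ properly for \emph{every} face $F$, which is not automatic; this is where the main obstacle lies. Because $\alpha$ carries no modulus constraint, I would move $\alpha$ within its class in $z^r(X, \bullet)$ by a Bloch--Levine type moving lemma on the smooth scheme $X$, so that for the fixed cycle $\beta$ the product is in good position with respect to the diagonal and all faces simultaneously. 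Two things require care: (i) that a single moving of $\alpha$ achieves proper intersection for all faces and the diagonal at once, so that one obtains a quasi-isomorphic subcomplex on which $\delta^{*}$ is defined; and (ii) that the resulting diagonal intersection still satisfies the modulus condition with respect to $D$. Point (ii) is plausible precisely because the moving only alters the first factor while the modulus is carried by $\beta$ on the second factor and $\delta^{*}(X \times D) = D$; I would prove preservation of modulus under the refined Gysin pullback along $\delta$ as a separate lemma, parallel to the product case.

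Finally I would check that the map so constructed is a chain map up to homotopy and is independent of the choices made in the moving lemma, so that it descends to a well-defined pairing
\[
\cap_X \colon \CH^r(X, n_1) \otimes \CH_s(X|D, n_2) \longrightarrow \CH_{s-r}(X|D, n_1 + n_2).
\]
Associativity and the axioms making $\CH_{*}(X|D, *)$ a module over the Chow ring $\CH^{*}(X, *)$ then follow from the same external-product-and-diagonal formalism, reducing in each case to the moving lemma established above; I expect that moving lemma, together with the verification that it does not disturb the modulus condition, to be the essential difficulty of the whole argument.
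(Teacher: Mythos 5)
Your proposal matches the paper's construction essentially step for step: the external product landing in $z_{\dim X+s-r}(X\times X\,|\,X\times D,\,n_1+n_2)$, Bloch's moving lemma applied to the $\CH^r(X,\ast)$ factor alone so that a single move of $\alpha$ achieves good position with respect to the diagonal and all faces simultaneously (the paper's refined collections and Lemma~\ref{lem:ML-Mod*}), and then pullback along the diagonal, exactly as in Theorem~\ref{thm:product}. The only difference is cosmetic: where you propose a separate Gysin-type modulus lemma ``parallel to the product case,'' the paper gets modulus preservation for $\Delta^*$ immediately from the containment lemma (Proposition~\ref{prop:CL*}) --- every closed subscheme of a cycle with modulus $X\times D$ again has modulus $X\times D$, and restricting along $\Delta$ converts this into modulus $D$ --- which is exactly the statement your sketch would need to establish.
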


From this result, we deduce certain pull-back maps (\S \ref{sec:mod pull-back}), the projective bundle formula and the blow-up formula (\S \ref{sec:mod proj bundle}) for the higher Chow groups with modulus.

\begin{thm} Assume $r \geq 2$. When $k = \ov{k}$, ${\rm char} (k) = 0$ and $D \subset \mathbb{A}^r$ is an effective Cartier divisor with $\deg (D_{\rm red}) \leq r$, then $\CH^{r+n} (\mathbb{A}^r |D, n) = 0$ for $n \ge 0$. 

When $k = \mathbb{F}_q$ or $\ov{\mathbb{F}}_q$ and $X$ is an affine variety of dimension $r$ with an effective Cartier divisor $D \subset X$, then $\CH^{r+n} (X|D, n) = 0$ for $n \geq 1$.
\end{thm}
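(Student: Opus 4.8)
The first reduction is purely formal and common to both cases. Since $\dim X = r$, an admissible cycle contributing to $\CH^{r+n}(X|D,n)$ lives on $X\times\square^n$, which has dimension $r+n$, and has codimension $r+n$, hence dimension $0$; moreover the next term $z^{r+n}(X|D,n-1)$ of the cubical cycle complex vanishes for dimension reasons. Thus $\CH^{r+n}(X|D,n)$ is the cokernel of $\partial\colon z^{r+n}(X|D,n+1)\to z^{r+n}(X|D,n)$, and it suffices to prove that every integral generator — a closed point $P=(a,t)\in X\times\square^n$ meeting the faces properly and satisfying the modulus condition — is a boundary. For a $0$-dimensional $P$ the modulus condition is automatic once its support avoids $|D|$, which it must; so the whole difficulty is concentrated in the construction of the bounding $1$-cycles, where the modulus condition becomes a genuine constraint.

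For $X=\mathbb{A}^r$ with $k=\ov k$ of characteristic $0$, the point $P$ is $k$-rational, say $P=(a,t_1,\dots,t_n)$ with $a\in\mathbb{A}^r(k)\setminus|D|$ and $t_i\in k^\times\setminus\{1\}$. The plan is to exhibit $P$ as a boundary by an explicit homotopy: I would choose a rational curve $u\mapsto (a(u),\tau(u))$ in $\mathbb{A}^r\times\square^n$, parametrised by a new cube coordinate $u\in\square$, that specialises to $P$ at $u=0$ and degenerates into a face at $u=\infty$, and form the associated $1$-cycle $W\subset\mathbb{A}^r\times\square^{n+1}$. The content is that $W$ can be chosen \emph{admissible}: it must meet every face of $\square^{n+1}$ properly and, crucially, its closure must satisfy $\nu^*D\le\nu^*F_\infty$ on the normalisation, where $F_\infty$ is the face divisor at infinity. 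Because this inequality only bites where the $\mathbb{A}^r$-projection of $W$ crosses $|D|$, the hypotheses $r\ge 2$ and $\deg(D_{\mathrm{red}})\le r$ enter exactly here: the extra directions furnished by $r\ge 2$ give room to route the curve $a(u)$, while the degree bound keeps the contact of this curve with $D_{\mathrm{red}}$ low enough to be dominated by the prescribed contact with the face at infinity. One then disposes of the residual boundary terms, coming from the zeros and poles of the moving coordinates, by an inductive moving argument that reduces them to the same shape with strictly simpler support.

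For $k=\F_q$ or $\ov\F_q$ and a general affine $X$ of dimension $r$, the characteristic-$0$ homotopies are unavailable, and I would argue instead through the arithmetic of the residue fields. Since $X$ is of finite type over a finite field (resp.\ over $\ov\F_q$), every closed point of $X\times\square^n$ has residue field a finite extension of $\F_q$ (resp.\ equal to $\ov\F_q$) — in all cases a \emph{perfect} field of characteristic $p$. In the range $n\ge 1$ the $0$-cycle group with modulus is controlled by the higher ($\ge 1$) de Rham--Witt forms, equivalently the absolute K\"ahler differentials, of these residue fields, precisely the invariants that govern the one-variable Bloch--Esnault--R\"ulling computation; but over a perfect field these forms vanish in positive degree. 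Concretely I would set up a Gersten-type surjection from such residue-field invariants onto $\CH^{r+n}(X|D,n)$ and invoke their vanishing, which is where the restriction to $n\ge 1$ (landing in differential degree $\ge 1$) and to finite or algebraically closed finite fields (perfectness) is used.

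The main obstacle is the admissibility of the homotopies in the characteristic-$0$ case: the bounding curve must simultaneously avoid improper intersections with the codimension-one faces and respect the modulus inequality, and these two demands pull against each other. Reconciling them is exactly where $\deg(D_{\mathrm{red}})\le r$ and $r\ge 2$ are indispensable, and it is also why the statement genuinely fails for $r=1$, where the one-variable additive groups of Bloch--Esnault--R\"ulling are nonzero. In the finite-field case the analogous subtlety is to produce the Gersten presentation compatibly with the modulus condition rather than by naive symbol manipulation, for which the cap-product module structure established above is the natural tool.
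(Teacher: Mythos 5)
Your opening reduction (the group is generated by closed points, and a $0$-cycle satisfies the modulus condition automatically once it avoids $|D|\times\square^n$, so everything rests on producing admissible bounding $1$-cycles) agrees with the paper. But your central mechanism in characteristic $0$ is where the proposal breaks: you want the bounding curve's $\A^r$-projection to be allowed to cross $D$, with the inequality $\nu^*D\le\nu^*F_\infty$ enforced because $\deg(D_{\rm red})\le r$ ``keeps the contact low.'' The degree of $D_{\rm red}$ does not bound the contact order of a curve with $D$ --- a curve can osculate even a line to arbitrarily high order, and $D$ may moreover be non-reduced --- so this cannot work as stated, and you never actually construct the homotopy or the ``inductive moving argument'' for the residual boundary terms. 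The degree hypothesis enters the paper's proof in a completely different way: writing $D=V(f)$ with $f(x)=1$, so $f=1-g$ with $g(x)=0$, some irreducible factor $g_{i_0}$ of $g$ vanishes at $x$ and has degree $\le r$, and the hypersurface $V(g_{i_0})$ contains $x$ while being entirely \emph{disjoint} from $D$ (since $f\equiv 1$ on it). Because an integral hypersurface of degree $\le r$ in $\P^r$ is uniruled --- in the smooth case it is Fano, hence rationally connected by Koll\'ar--Miyaoka--Mori, which is precisely where $\Char(k)=0$ is used, and the singular case is handled by degenerating in the family (Lemma \ref{lem:uniruled}) --- one obtains a rational affine curve $C\ni x$ with $C\cap D=\emptyset$ (Lemma \ref{lem:curve}). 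Then no modulus estimate is ever needed: Corollary \ref{cor:induced map} gives $\iota_*:\CH^{n+1}(C,n)\to\CH^{n+r}(\A^r|D,n)$ hitting $[z]$, and $\CH^{n+1}(C,n)=0$ because the normalization of $C$ is open in $\A^1$ (localization plus homotopy invariance, Lemma \ref{lem:localization}). The deep input you are missing is uniruledness of low-degree hypersurfaces; your proposal neither identifies it nor explains where characteristic $0$ is used.

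The finite-field half has a parallel gap. The ``Gersten-type surjection from de Rham--Witt forms of residue fields onto $\CH^{r+n}(X|D,n)$'' for an arbitrary affine variety with an arbitrary effective Cartier divisor is not available in the literature and you do not construct it; in this generality it is essentially the statement being proved, and the cap-product module structure plays no role in the paper's argument here. The paper instead again reduces to a curve avoiding $D$, now by elementary ideal theory: $x\notin D$ gives $\fm+I=A$, so one picks $f\in I$, $g\in\fm$ with $(f,g)=A$; any irreducible component $E$ of $V(g)$ through $x$ satisfies $E\cap D\subset V(g)\cap V(f)=\emptyset$, and since $r\ge 2$ a chain-of-primes argument produces an integral curve $C\subset E$ through $x$. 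What remains is $\CH^{n+1}(C,n)=0$ for possibly singular affine curves over $\F_q$ or $\ov{\F}_q$, which the paper proves via the isomorphism $\CH^{n+1}(C,n)\simeq H^1_{\rm Zar}(C,\sK^M_{n+1})$ for smooth curves (Lemma \ref{lem:SK-1}), Steinberg's theorem that $K^M_n$ of a finite field vanishes for $n\ge 2$, the Bass--Milnor--Serre vanishing $SK_1=0$ for affine curves over finite fields when $n=1$ --- this is exactly where the restriction $n\ge 1$ and the arithmetic of finite fields enter, not perfectness or positive-degree differential forms --- and a normalization/Mayer--Vietoris argument with norm surjectivity for singular curves (Proposition \ref{prop:SK-1-vanish}). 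Your perfectness heuristic points at the wrong invariant: it is Milnor $K$-theory of finite fields, not de Rham--Witt forms, that does the work.
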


The multivariate additive higher Chow groups are the higher Chow groups with modulus that are attached to $(X\times \mathbb{A}^r, D_{\un{m}})$, where $D_{\un{m}} = \{ t_1 ^{m_1} \cdots t_r ^{m_r} = 0 \}$ for $(t_1, \cdots, t_r) \in \mathbb{A}^r$ and $m_i \geq 1$. When $r=1$, the group of $0$-cycles $\CH^{1+n} (\mathbb{A}^1|D_{m+1}, n)$ is the group $\mathbb{W}_m \Omega_k^n$ of the big de Rham-Witt forms (see R\"ulling \cite{R}). When $r \geq 2$, we prove: 

\begin{thm}
For $r \ge 2$ and $n \ge 0$, we have $\CH^{r+n} (\mathbb{A}^r|D_{\un{m}}, n) = 0$. When $X$ is a $k$-scheme of dimension $r-2$ with an effective Cartier divisor $D$, then $\CH^{r+n} (X \times \mathbb{A}^2| D \times \mathbb{A}^2 + X \times D_{(m_1, m_2)}, n) = 0$. 
\end{thm}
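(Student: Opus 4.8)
My plan is to obtain the first assertion as a special case of the vanishing theorem proved above, and then to bootstrap from it to the second assertion by an induction on $r$ that peels off the divisor $D$ and, on the complement, reduces the base to an affine space.

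\emph{First assertion.} Write $D_{\un{m}}=\sum_{i=1}^{r}m_i\{t_i=0\}$ as an effective Cartier divisor on $\mathbb{A}^r$; its support is the union of the $r$ coordinate hyperplanes, so $(D_{\un m})_{\red}=\{t_1\cdots t_r=0\}$ and $\deg\big((D_{\un m})_{\red}\big)=r$. Since $r\ge 2$ and $\deg\big((D_{\un m})_{\red}\big)\le r$, the hypotheses of the preceding vanishing theorem are satisfied (over the fields treated there), and it gives $\CH^{r+n}(\mathbb{A}^r|D_{\un m},n)=0$ for all $n\ge 0$. This is the only point where the ground field genuinely enters; over a more general $k$ one first reduces to the cases handled above.

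\emph{Second assertion, set-up.} I would induct on $r\ge 2$. For $r=2$ one has $\dim X=0$, so $X$ is a finite disjoint union of spectra of (Artin local) rings and every effective Cartier divisor on it is empty; the group then splits as a finite direct sum of copies of $\CH^{2+n}(\mathbb{A}^2_{k_i}|D_{(m_1,m_2)},n)$, each of which vanishes by the first assertion. For the inductive step, put $Z=\supp(D)$, an $(r-3)$-dimensional closed subscheme with open complement $U=X\setminus Z$ on which $D|_U=\emptyset$. I would run the localization sequence for the relevant $0$-cycle groups with modulus along $Z\times\mathbb{A}^2\subset X\times\mathbb{A}^2$: the closed stratum contributes $\CH^{(r-1)+n}(Z\times\mathbb{A}^2|E_Z,n)$, which vanishes by the inductive hypothesis once the restricted modulus $E_Z$ has been identified with one of the admissible shape $D_Z\times\mathbb{A}^2+Z\times D_{(m_1,m_2)}$, while the open stratum reduces everything to the case $D=\emptyset$.

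\emph{The case $D=\emptyset$.} Here I must show $\CH^{r+n}(X\times\mathbb{A}^2|X\times D_{(m_1,m_2)},n)=0$ for $X$ integral affine of dimension $r-2$. Choosing a Noether normalization $\pi_0\colon X\to\mathbb{A}^{r-2}$ and setting $\pi=\pi_0\times\id\colon X\times\mathbb{A}^2\to\mathbb{A}^r$, the modulus $X\times D_{(m_1,m_2)}$ is the pull-back along $\pi$ of $\mathbb{A}^{r-2}\times D_{(m_1,m_2)}$, whose reduced support $\{t_1t_2=0\}$ has degree $2\le r$, so that $\CH^{r+n}(\mathbb{A}^r|\mathbb{A}^{r-2}\times D_{(m_1,m_2)},n)=0$ by the first assertion. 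Using the cap product of the module-structure theorem together with the pull-back and push-forward maps of \S\ref{sec:mod pull-back}, I would compare the two groups and transport the vanishing from $\mathbb{A}^r$ to $X\times\mathbb{A}^2$; the module structure is precisely what lets one split off the $\mathbb{A}^2$-factor and move along the finite correspondence $\pi$.

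The hard part will be twofold. First, there is the localization step: a localization sequence for $0$-cycles with modulus is delicate, and, worse, the cross term $D\times\mathbb{A}^2$ does not restrict to a Cartier divisor on its own support $Z$, so one must either blow up or pass to a modified modulus and check that admissibility is preserved under restriction to $Z\times\mathbb{A}^2$. Second, in the case $D=\emptyset$ the finite map $\pi$ is not an isomorphism on cycle groups, so the vanishing on $\mathbb{A}^r$ has to be upgraded to the source by a modulus-compatible moving lemma: every admissible $0$-cycle on $X\times\mathbb{A}^2$ must be shown congruent, modulo boundaries of admissible $1$-cycles respecting $X\times D_{(m_1,m_2)}$, to a combination pulled back along $\pi$. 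Establishing this moving lemma — equivalently, the K\"unneth-type surjectivity that splits off the $\mathbb{A}^2$-factor through the cap product — is where the real work lies, and the presence of \emph{two} modulus directions in $\mathbb{A}^2$, as opposed to the single direction of the additive ($r=1$) theory, is exactly what allows the requisite contracting homotopy to satisfy the modulus inequality.
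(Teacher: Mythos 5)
There are two genuine gaps here, and both stem from missing the one idea the paper actually uses. For the first assertion, your reduction to Theorem \ref{thm:0 vanishing} only covers $k=\ov{k}$ of characteristic $0$, and the parenthetical claim that ``over a more general $k$ one first reduces to the cases handled above'' is not an argument: there is no reduction from an arbitrary field to these cases. Over $k=\mathbb{F}_p(t)$ neither Theorem \ref{thm:0 vanishing} nor Theorem \ref{thm:0 vanishing-1} applies; over $k=\mathbb{F}_q$ the latter excludes $n=0$; and base change to $\ov{k}$ followed by a transfer only controls torsion along finite subextensions, it does not give vanishing over $k$. The statement is asserted for \emph{any} field, and the point of the paper's proof is that no uniruledness or characteristic hypothesis is needed: since a $0$-cycle with modulus $D_{\un{m}}$ is supported away from $D_{\un{m}}\times\square^n$, one first reduces to $k$-rational points by base changing to $\ell=k(z)$ and pushing forward via Proposition \ref{prop:PFF}; then a rational point $x=(c_1,\cdots,c_r)$ off $D_{\un{m}}$ has all $c_i\in k^{\times}$, so the \emph{explicit} smooth rational affine curve $C=\{t_1t_2=c_1c_2\}\times(c_3,\cdots,c_r)$ passes through $x$ and is entirely disjoint from $D_{\un{m}}$. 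Corollary \ref{cor:induced map} then factors $[z]$ through $\CH^{n+1}(C,n)$, which vanishes by Lemma \ref{lem:localization}. This works uniformly over every field; your route through low-degree hypersurfaces being uniruled (Koll\'ar--Miyaoka--Mori) is both unnecessary and unavailable outside characteristic $0$.

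For the second assertion your plan would fail outright. The inductive step invokes ``the localization sequence for the relevant $0$-cycle groups with modulus,'' but no such sequence exists in this paper or was known at the time (localization for $\CH^*(X|D,n)$ is a serious open problem, not a delicacy one can blow up around), and you yourself note that $D\times\mathbb{A}^2$ does not restrict to a Cartier divisor on its support, so the inductive shape is not even preserved. The $D=\emptyset$ endgame is also unsupported: the cap product of Theorem \ref{thm:product} and the pull-backs of \S\ref{sec:mod pull-back} require smoothness (and projectivity over the base), which a general $X$ of dimension $r-2$ need not have, and the ``modulus-compatible moving lemma'' splitting off the $\mathbb{A}^2$-factor along a Noether normalization is precisely the kind of statement the paper never proves. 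None of this machinery is needed: the same hyperbola trick applies verbatim. A rational point away from $D\times\mathbb{A}^2+X\times D_{(m_1,m_2)}$ has the form $z=(z',c_1,c_2)$ with $z'\in (X\setminus D)(k)$ and $c_1c_2\neq 0$, and the curve $C=\{z'\}\times\{t_1t_2=c_1c_2\}$ misses the \emph{entire} modulus divisor (the first factor avoids $D\times\mathbb{A}^2$, the second avoids $X\times D_{(m_1,m_2)}$), so Corollary \ref{cor:induced map} and Lemma \ref{lem:localization} conclude, with no induction on $r$, no localization, and no smoothness hypothesis on $X$. Your closing intuition that the two modulus directions in $\mathbb{A}^2$ are what make the argument work is correct, but the mechanism is the existence of the horizontal curve $t_1t_2=c$ avoiding $\{t_1t_2=0\}$, not a contracting homotopy satisfying a modulus inequality.
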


One may regard the above as the cycle-theoretic counterpart for the vanishing of the $K$-group $T(\mathbb{G}_a, \mathbb{G}_a, \mathcal{F}_1, \cdots, \mathcal{F}_n)$ of reciprocity functors in Ivorra-R\"ulling \cite[Theorem 5.5.1]{IR}. For codimension $1$ cycles, we have the following partial results (see Theorems \ref{thm:codim 1 n=0}, \ref{thm:codim 1 n=1}):

\begin{thm}For $r \geq 2$, $\CH^1 (\mathbb{A}^r|D_{(1, \cdots, 1)}, 0) = 0$ and $\CH^1 (\mathbb{A}^r|D_{(1, \cdots, 1)}, 1) \not = 0$.
\end{thm}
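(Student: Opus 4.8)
The plan is to treat the two assertions by quite different means: the case $n=0$ reduces to a relative Picard computation, while the case $n=1$ calls for an explicit cycle detected by a residue regulator. Throughout I use $\square=\P^1\setminus\{1\}$ with faces $\{y=0\},\{y=\infty\}$ and the modulus measured along $\{y=1\}$, and I write $T=t_1\cdots t_r$, $D=D_{(1,\cdots,1)}=\{T=0\}$.

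For $\CH^1(\A^r|D,0)=0$: the group is generated by integral hypersurfaces $V=V(f)$ disjoint from $D$, disjointness being the cube-degree-$0$ modulus condition. Because $r\ge2$ the divisor $D$ is connected, so a function invertible on $D$ is constant; hence $f|_D=a\in k^\times$ and, after rescaling, $f=a(1+T\,h)$ with $h\in k[t_1,\dots,t_r]$. I would then show $V$ bounds by taking the graph $W=\{y=f/a\}\subset\A^r\times\square$: its only nonempty face is $W|_{y=0}=V$ (the leading $y$-coefficient being $1$), so $\partial W=V$, and the modulus holds since on the normalization $W\cong\A^r$ one has $\nu^*D=\Div(T)\le\Div(T)+\Div(h)=\Div(f/a-1)=\nu^*\{y=1\}$, the inequality being the regularity of $h$. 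Summing over components gives the vanishing; equivalently $\CH^1(\A^r|D,0)\cong\Pic(\A^r|D)=0$, as $\mathcal O(\A^r)^\times\twoheadrightarrow\mathcal O(D)^\times=k^\times$.

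For $\CH^1(\A^r|D,1)\ne0$ I would first exhibit a nonzero candidate. Fix $\mu\in k$ with $\mu^2\ne4$ and set $Z_0=V\big((y-1)^2-\mu T(y-1)+T^2\big)$. The role of $\mu^2\ne4$ is the modulus condition: over the generic point of each component $\{t_i=0\}$ of $D$ the closure $\overline{Z_0}$ meets $\{y=1\}$ in two distinct, unramified branches $y-1=\lambda_iT(1+\cdots)$ with $\lambda_i$ the roots of $\lambda^2-\mu\lambda+1$; distinctness is exactly what excludes the forbidden ramified contact $y-1\sim\sqrt T$, and on the normalization one then gets $\nu^*D\le\nu^*\{y=1\}$ (with equality to first order). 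Since the leading $y$-coefficient of $Z_0$ is $1$, its only boundary face is $V(f_0)$ with $f_0=1+\mu T+T^2$, which is disjoint from $D$; by the $n=0$ computation $V(f_0)=\partial W_0$ for the admissible graph $W_0=\{y=f_0\}$. Hence $Z:=Z_0-W_0$ is admissible with modulus and $\partial Z=0$, so it defines a class in $\CH^1(\A^r|D,1)$.

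To prove $[Z]\ne0$ I would build a regulator in the spirit of Bloch–Esnault–R\"ulling and Ivorra–R\"ulling: a homomorphism out of $\CH^1(\A^r|D,1)$ into a group of Kähler differentials, defined on an admissible cycle by taking iterated residues along the components of $D$ of the logarithmic form $dy/(y-1)$ pulled back to the branches passing through $\{y=1\}$. On the substitution $y-1=\lambda_iT(1+\cdots)$ this form becomes $\sum_j dt_j/t_j+(\text{regular})$, so each branch of $Z_0$ contributes the same residue; the two branches of $Z_0$ against the single branch of $W_0$ leave a nonzero residue for $Z$, whereas for $r=1$ the corresponding residue sits in the wrong degree and vanishes, matching the known triviality there. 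The main obstacle is the regulator itself: one must prove it is well defined, i.e. that it kills the images of cube-degree-$2$ cycles and the degenerate pieces, which is a reciprocity law for residues of these logarithmic forms. Establishing that reciprocity and then evaluating the regulator to get a nonzero value is the heart of the matter; the modulus bookkeeping for $\overline{Z_0}$ at the deeper strata of $D$ (where several $t_i$ vanish) is a secondary check.
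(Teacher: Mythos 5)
Your $n=0$ argument is correct and is essentially the paper's own proof: your normalization $f=a(1+Th)$ is Lemma~\ref{lem:divisor present} (obtained there from the elementary remark that the modulus forces $(f,t_i)=(1)$, rather than from connectedness of $D$), and your bounding graph $\{y=f/a\}$ is, up to the coordinate change $\psi$, exactly the paper's cycle $W=V(1-t_1\cdots t_r\,g\,y_1)$ in Theorem~\ref{thm:codim 1 n=0}, with the same divisor inequality on the smooth closure verifying the modulus.

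The $n=1$ part, however, has a genuine gap, and it is worse than the one you flag. You concede that well-definedness of your regulator is unproven; in fact, for the functional you actually describe it is \emph{false}. Pulling $dy/(y-1)$ back to a branch $y-1=\lambda T(1+\cdots)$ gives $\sum_j dt_j/t_j+(\mathrm{regular})$ independently of $\lambda$ --- as you say, ``each branch contributes the same residue'' --- so your functional merely counts branches through the modulus divisor, and that count does not annihilate boundaries. Concretely, in the paper's coordinates (faces at $y_i\in\{0,1\}$, modulus at $\infty$) the surface $V\bigl(1-t_1\cdots t_r\,y_1(y_2+1)\bigr)$ is admissible (this is the multilinear case $a=b=1$, $c=d=0$ treated in Lemma~\ref{lem:rho recip}), and its boundary is $V(1-T(y+1))+V(1-Ty)-V(1-2Ty)$ with $T=t_1\cdots t_r$; each term has a single branch along the modulus, so the branch count gives $1+1-1=1\neq 0$. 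The invariant that does satisfy reciprocity is the \emph{slope-weighted} version: the paper's $\rho$ of Lemma~\ref{lem:rho recip} extracts the first-order coefficient $\lambda$ of $y-1$ along $T$, so $\rho(V(1-aTy_1))=a$, and $\rho\circ\partial=0$ is checked by hand using the degree bound $\deg_{y_i}h\le 1$ forced on admissible surfaces by the modulus (Lemma~\ref{lem:y degree bound}); on my example it gives $1+1-2=0$.

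This defect is fatal for your specific candidate: the branches of $Z_0$ have slopes $\lambda_1,\lambda_2$ with $\lambda_1+\lambda_2=\mu$, while $W_0=\{y=1+T(\mu+T)\}$ has slope $\mu$, so $\rho(Z)=\pm(\lambda_1+\lambda_2-\mu)=0$. Thus $Z=Z_0-W_0$ is invisible to the only regulator known to descend, and your claimed nonzero value ``two branches minus one branch'' is precisely the branch-counting functional that fails reciprocity; there is no reason to believe $[Z]\neq 0$, and quite possibly $[Z]=0$. (Your instinct to correct the face $V(f_0)$ so as to obtain an honest cycle is reasonable --- the paper's own witness $Z_a=V(1-aTy_1)$ in Theorem~\ref{thm:codim 1 n=1} is glossed at this point --- but the correction must be chosen with a \emph{different} $\rho$-value, which your $W_0$ is not.) The repair is the paper's route: prove the degree bound, define $\rho$ as the slope extraction, verify $\rho\circ\partial=0$ on multilinear surfaces, and evaluate $\rho(Z_a)=a$.
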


Based on Hesselholt \cite{Hesselholt Nagoya} on $K$-groups of $(\mathbb{A}^2, \{t_1 t_2 = 0 \})$, we guess $\CH^q (\mathbb{A}^2|D_{(1,1)}, n) = 0$ for $n<2q-1$, which is an analogue of Beilinson-Soul\'e vanishing conjecture. We have verified it when $q = n+2$ and when $q=1$ and $n=0$. When $q=1$ and $n=1$, it does not satisfy $n<2q-1$, and the group is nontrivial.

\vskip .2cm
 
\noindent \textbf{Conventions : } A $k$-scheme means a separated scheme of finite type over a field $k$, and a $k$-variety is an integral $k$-scheme. Let $\Sch_k$ be the category of $k$-schemes. For any $S \in \Sch_k$, let $\Sm_S$ be the category of schemes smooth over $S$ and $\SmProj_S$ be the subcategory of schemes smooth and projective over $S$. 
The product $X \times Y$ usually means $X \times_k Y$, unless we specify otherwise.

\section{Cycle complex with modulus}\label{section:RMC}

For $\P^1  = {\rm Proj}_k(k[s_0, s_1])$, we use $y = {s_1}/{s_0}$ as its coordinate. Let $\square:= \P^1 \setminus \{1\}$.  For $n \geq 1$, let $(y_1, \cdots, y_n) \in \square^n$ be the coordinates. A face $F\subset \square^n$ means a closed subscheme defined by the set of equations of the form $\{y_{i_1} = \epsilon_{1}, \cdots , y_{i_m} = \epsilon_{m}\}$ for some $1 \leq i_1< \cdots < i_m \leq n$ and $\epsilon_j \in \{ 0, \infty \}$. Let $\ov{\square}:= \mathbb{P}^1$. A face of $\ov{\square}^n$ is the closure of a face in $\square^n$. For $1 \leq i \leq n$, let $F_{n,i} ^1 \subset \ov{\square}^n$ be the closed subscheme given by $\{ y_i = 1 \}$. Let $F_n ^1:= \sum_{i=1} ^n F_{n,i} ^1$, which is the cycle associated to the closed subscheme $\ov{\square}^n \setminus \square^n$. Let $\square^0 = \ov{\square}^0 := \Spec (k)$. Let $\iota_{n,i, \epsilon}: \square^{n-1} \inj \square^{n}$ be the obvious inclusion for $i \in \{ 1, \cdots, n \}$ and $\epsilon \in \{ 0, \infty \}$.

\subsection{Basic lemmas}

We discuss some background lemmas first. 

\begin{lem}[{\cite[Lemma~2.1]{KP}}]\label{lem:Pull-D}
Let $X$ be a normal variety and let $D_1$ and $D_2$ be effective Cartier divisors on $X$ such that $D_1 \ge D_2$ as Weil divisors. Let $Y \subset X$ be a closed subset which intersects $D_1$ and $D_2$ properly. Let $f: Y^N \to X$ be the composite of the inclusion and the normalization of $Y_{\rm red}$. Then $f^*(D_1) \ge f^*(D_2)$.
\end{lem}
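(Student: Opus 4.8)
The plan is to reduce the asserted inequality to the single statement that the pullback of the effective Cartier divisor $E := D_1 - D_2$ is effective, and then to verify this by a local computation on $Y^N$, with the proper intersection hypothesis guaranteeing that no local defining equation pulls back to zero.

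First I would record that $E = D_1 - D_2$ is a Cartier divisor (a difference of Cartier divisors) which by hypothesis is effective as a Weil divisor. Since $X$ is normal, the local ring at every codimension-one point is a DVR, so a Cartier divisor is effective as a Cartier divisor exactly when all of its Weil-divisor coefficients are nonnegative; hence $E$ is an \emph{effective} Cartier divisor. Locally on an affine open $U \subseteq X$ I may then write $D_1 = (g_1)$, $D_2 = (g_2)$ and $E = (h)$ with $g_1, g_2, h \in \mathcal{O}(U)$ regular and, crucially, $g_1 = g_2 h$.

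Next I would use the geometry. Since $D_1 = D_2 + E$ with both summands effective, one has $\Supp(E) \subseteq \Supp(D_1)$. The hypothesis that $Y$ meets $D_1$ properly says that no irreducible component of $Y$ is contained in $\Supp(D_1)$, hence none is contained in $\Supp(E)$ either. Writing $Y^N$ as the disjoint union of the normalizations of the components of $Y_{\red}$, it follows that $f^*(h)$ is a nonzero regular function on each component; likewise $f^*(g_1)$ and $f^*(g_2)$ are nonzero on each component by the proper intersection of $Y$ with $D_1$ and $D_2$. As $Y^N$ is reduced, a regular function that is nonzero on every component is a nonzerodivisor, so $f^*(D_1) = (f^*(g_1))$, $f^*(D_2) = (f^*(g_2))$ and $f^*(E) = (f^*(h))$ are genuine effective Cartier divisors on $Y^N$. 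The factorization then pulls back to $f^*(g_1) = f^*(g_2)\,f^*(h)$, yielding the identity of divisors $f^*(D_1) = f^*(D_2) + f^*(E)$; since $f^*(E)$ is effective, this is precisely $f^*(D_1) \ge f^*(D_2)$.

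The one point requiring care is the well-definedness of the pullbacks: a priori $f^*(D_i)$ need not be a Cartier divisor at all if $Y$ were contained in $\Supp(D_i)$, and this is exactly what the proper intersection hypothesis excludes. Beyond that, the argument is just the additivity of the local-equation description of pullback, together with the normality of $X$ (to pass from effective-as-Weil to effective-as-Cartier for $E$) and of $Y^N$ (so that effectivity can be read off from the order functions at codimension-one points).
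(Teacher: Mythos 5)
Your proof is correct, and it is essentially the argument behind the cited result: the paper itself omits the proof, quoting \cite[Lemma~2.1]{KP}, where one likewise uses normality of $X$ to promote $E = D_1 - D_2$ from Weil-effective to an effective Cartier divisor (locally $g_1 = g_2 h$ with $h$ regular) and the proper-intersection hypothesis to ensure the local equations pull back to nonzerodivisors on $Y^N$, so that $f^*(D_1) = f^*(D_2) + f^*(E) \ge f^*(D_2)$. Your explicit attention to the well-definedness of the pullbacks and to $\Supp(E) \subseteq \Supp(D_1)$ matches the intended reasoning, so there is nothing to add.
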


\begin{lem}\label{lem:cancel}
Let $f: Y \to X$ be a dominant map of normal integral $k$-schemes. Let $D$ be a Cartier divisor on $X$ such that the generic points of $\Supp(D)$ are contained in $f(Y)$. Suppose that $f^*(D) \ge 0$ on $Y$. Then $D \ge 0$ on $X$.
\end{lem}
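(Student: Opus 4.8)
The plan is to reduce the effectivity of $D$ to a pointwise condition at codimension-one points of $X$, and then to run a local valuation argument at the points of $\Supp(D)$ that are met by $f$. First I would invoke normality of $X$: for a normal integral scheme a Cartier divisor $D$ is effective if and only if its associated Weil divisor is, i.e. $\ord_x(D) \ge 0$ for every codimension-one point $x \in X$ (this rests on the equality $\sO_{X,y} = \bigcap_{\codim(x)=1} \sO_{X,x}$, the algebraic Hartogs phenomenon for normal domains). At any codimension-one $x$ that is not a generic point of $\Supp(D)$ one has $\ord_x(D) = 0$, so it suffices to check $\ord_x(D) \ge 0$ at the generic points $x$ of $\Supp(D)$. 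For such an $x$ the local ring $\sO_{X,x}$ is a DVR with valuation $v_x = \ord_x$, and if $t \in K(X)^\times$ is a local equation for $D$ near $x$, then $\ord_x(D) = v_x(t)$.

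Next I would exploit the hypothesis that $x \in f(Y)$. Choosing $\eta \in Y$ with $f(\eta) = x$, the morphism $f$ induces a local homomorphism $\sO_{X,x} \to \sO_{Y,\eta}$; since $f$ is dominant between integral schemes it carries the generic point of $Y$ to that of $X$, so this homomorphism is compatible with the field extension $K(X) \inj K(Y)$. Consequently the local equation of $f^*(D)$ at $\eta$ is just $t$, regarded inside $K(Y)$, and the assumption $f^*(D) \ge 0$ yields $t \in \sO_{Y,\eta}$. Now if $v_x(t) < 0$, then $t^{-1} \in \fm_x$, so its image lies in $\fm_\eta$ by locality and $t^{-1}$ is a non-unit of $\sO_{Y,\eta}$; but $t \in \sO_{Y,\eta}$ together with $t\cdot t^{-1} = 1$ would force $t^{-1}$ to be a unit, a contradiction. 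Hence $\ord_x(D) = v_x(t) \ge 0$, and since this holds at every generic point of $\Supp(D)$, we conclude $D \ge 0$.

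I expect the main obstacle to be conceptual rather than computational. The argument hinges on two inputs: using normality to pass between effectivity as a Cartier divisor and effectivity as a Weil divisor (without normality the two notions diverge and the statement can fail), and the observation that the hypothesis on the generic points of $\Supp(D)$ is exactly what supplies a point $\eta$ lying over each prime divisor in the support — dominance alone only guarantees a point over the generic point of $X$, and a missed prime divisor would leave its coefficient uncontrolled. Once these are isolated, the valuation bookkeeping at each $\eta$ is routine, amounting to the identity $\sO_{X,x} = K(X) \cap \sO_{Y,\eta}$ inside $K(Y)$.
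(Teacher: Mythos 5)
Your proof is correct and follows essentially the same route as the paper's: reduce via normality (Hartogs) to checking $\ord_x(D) \ge 0$ at the generic points $x$ of $\Supp(D)$, where $\sO_{X,x}$ is a dvr, then use the hypothesis to pick $\eta \in Y$ over $x$ and exploit the locality of $\sO_{X,x} \to \sO_{Y,\eta}$ together with $f^*(D) \ge 0$ to rule out negative valuation. The paper phrases the final step by writing the local equation as $u\pi^n$ and arguing that $f^*(u)f^*(\pi)^n \in \sO_{Y,\eta}$ forces $n \ge 0$, which is the same unit-versus-maximal-ideal contradiction you obtain with $t^{-1} \in \fm_x$; your stalk-level formulation is in fact slightly cleaner than the paper's detour through an affine neighborhood and a closed point of $Y$.
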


\begin{proof} It refines \cite[Lemma 3.2]{KL} and \cite[Lemma 2.2]{KP}. Localizing at the generic points of $\Supp(D)$, we may assume $X = \Spec(A)$, for a dvr $A$ essentially of finite type over $k$. The divisor $D$ is given by a rational function $a= u \pi^n$ in ${\rm Frac}(A)$, where $ u \in A^{\times}$, $n \in \mathbb{Z}$, and $\pi$ is a uniformizing parameter of $A$. By our assumption, for some $y \in Y$, $f(y)$ is the closed point of $X$. Let $U \subset Y$ be an affine open neighborhood of $y$. Here, $f^* (D)|_U \geq 0$ and replacing $Y$ by $U$, we may assume $Y$ is affine. Then, for some closed point $y$, its image $f(y)$ is the closed point of $X$, but $f$ is dominant, so $f: \Spec (\mathcal{O}_{Y, y}) \to X$ is surjective. By abuse of notations, let $f^* : A \to \mathcal{O}_{Y, y}$ be the corresponding $k$-algebra homomorphism. In particular, the image  $f^*(\pi) \in \mathcal{O}_{Y,y}$ of $\pi$ is nonzero in the maximal ideal $\mathcal{M}_{Y, y} \subset \mathcal{O}_{Y,y}$. That $f^* (D) \geq 0$ implies $f^* (a) \in \mathcal{O}_{Y,y}$. Since the image $f^* (u) \in \mathcal{O}_{Y,y}$ of the unit $u$ is also a unit and $\pi \in \mathcal{M}_{Y,y}$, the element $f^* (a) = f^* (u) f^* (\pi)^n $ can lie in $ \mathcal{O}_{Y,y}$ only when $ n \geq 0$. Thus, $D$ is effective.
 \end{proof}

\subsection{Cycles with modulus}\label{sect:CWM}

Let $X \in \Sch_k$. For effective Cartier divisors $D_1$ and $D_2$ on $X$, we say $D_1 \leq D_2$ if $D_1 + D= D_2$ for some effective Cartier divisor $D$ on $X$. A \emph{scheme with an effective divisor} (sed) is a pair $(X,D)$, where $X \in \Sch_k$ and $D$ an effective Cartier divisor. A morphism $f: (Y, E) \to (X, D)$ of seds is a morphism $f: Y \to X$ in $\Sch_k$ such that $f^*(D)$ is defined as a Cartier divisor on $Y$ and $f^* (D) \leq E$. In particular, $f^{-1} (D) \subset E$. 
If $f: Y \to X$ is a morphism of $k$-schemes, and $(X, D)$ is a sed such that $f^{-1} (D ) = \emptyset$, then $f: (Y , \emptyset) \to (X, D)$ is a morphism of seds. 

\begin{defn}[{\cite{BS}, \cite{KS}}]\label{defn:modulus}
Let $(X,D)$ and $(\ov{Y},E)$ be schemes with effective divisors. Let $Y = \ov{Y} \setminus E$. Let $V \subset X \times Y$ be an integral closed subscheme with closure $\ov{V} \subset X \times \ov{Y}$. We say $V$ is a \emph{has modulus $D$ (relative to $E$)} if $\nu^*_V(D \times \ov{Y}) \le \nu^*_V(X \times E)$ on $\ov{V}^N$, where $\nu_V: \ov{V}^N \to \ov{V} \inj X \times \ov{Y}$ is the normalization followed by the closed immersion.
\end{defn}  

In case $Y= \ov{Y} = \Spec(k)$, that $V$ has modulus $D$ on $X \times Y$ is equivalent to $V \cap D = \emptyset$. We now state the following version of the containment lemma \cite[Proposition 2.4]{KP}, whose proof is almost identical so we omit it.

\begin{prop}[Containment lemma]\label{prop:CL*}
Let $(X,D)$ and $(\ov{Y}, E)$ be schemes with effective divisors and $Y= \ov{Y} \setminus E$. If $V \subset X \times Y$ is a closed subscheme with modulus $D$ relative to $E$, then any closed subscheme $W \subset V$ has modulus $D$ relative to $E$, too.
\end{prop}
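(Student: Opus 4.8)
The plan is to reduce to the case where $W$ is integral and then transport the modulus inequality from $\ov{V}^N$ to $\ov{W}^N$ through an auxiliary variety, invoking the two lemmas above. Since the modulus condition on a cycle amounts to the same condition on each of its integral components, and every component of a closed subscheme $W \subset V$ is an integral closed subscheme of $V$, I may assume that both $V$ and $W$ are integral. Write $\ov{W} \subset \ov{V} \subset X \times \ov{Y}$ for the closures and $\nu_W: \ov{W}^N \to X \times \ov{Y}$, $\nu_V: \ov{V}^N \to X \times \ov{Y}$ for the normalizations followed by the closed immersions. The hypothesis is that $\Delta_V := \nu_V^*(X \times E) - \nu_V^*(D \times \ov{Y}) \ge 0$ on $\ov{V}^N$, and the goal is to prove the same inequality $\Delta_W := \nu_W^*(X \times E) - \nu_W^*(D \times \ov{Y}) \ge 0$ on $\ov{W}^N$.

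The first real step is to build a morphism relating $\ov{W}^N$ and $\ov{V}^N$. Since $\ov{W}$ need not be contained in the normal locus of $\ov{V}$, there is in general no morphism $\ov{W}^N \to \ov{V}^N$ lifting $\ov{W}^N \to \ov{V}$; I therefore pass through the preimage of $\ov{W}$. Let $p: \ov{V}^N \to \ov{V}$ be the normalization and set $Z := p^{-1}(\ov{W})$. As $p$ is finite and surjective, so is $Z \to \ov{W}$, and I can choose an irreducible component $Z_0$ of $Z_{\red}$ that dominates $\ov{W}$. Then $Z_0 \to \ov{W}$ is a finite dominant morphism of integral schemes, so it induces a finite surjective morphism on normalizations $\pi: Z_0^N \to \ov{W}^N$; composing $Z_0^N \to Z_0 \inj \ov{V}^N$ gives $g: Z_0^N \to \ov{V}^N$. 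By construction $\nu_V \circ g = \nu_W \circ \pi$, since both equal the composite $Z_0^N \to \ov{W} \inj X \times \ov{Y}$.

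With this diagram in hand, the conclusion follows by pulling back and then descending. The generic point of $Z_0$ maps to the generic point of $\ov{W}$, which lies in $X \times Y$ and, by the admissibility built into the cycle groups (which is what makes $\nu_V^*(D \times \ov{Y})$ a genuine Cartier divisor), avoids both $X \times E$ and $D \times \ov{Y}$; hence $Z_0$ meets $\nu_V^*(X \times E)$ and $\nu_V^*(D \times \ov{Y})$ properly. Applying \lemref{lem:Pull-D} to the normal variety $\ov{V}^N$, the divisors $\nu_V^*(X \times E) \ge \nu_V^*(D \times \ov{Y})$, and the subvariety $Z_0$, I obtain $g^*\nu_V^*(X \times E) \ge g^*\nu_V^*(D \times \ov{Y})$ on $Z_0^N$. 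Rewriting via $\nu_V \circ g = \nu_W \circ \pi$, this reads $\pi^*\Delta_W \ge 0$. Finally $\pi: Z_0^N \to \ov{W}^N$ is a dominant morphism of normal integral schemes with image all of $\ov{W}^N$, so in particular its image contains the generic points of $\Supp(\Delta_W)$; \lemref{lem:cancel} then yields $\Delta_W \ge 0$, which is precisely the modulus condition for $W$.

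The main obstacle, and the reason the argument is not purely formal, is the well-definedness and proper-intersection bookkeeping: one must ensure that $\nu_W^*(D \times \ov{Y})$ is a bona fide Cartier divisor (equivalently $\ov{W} \not\subset D \times \ov{Y}$) and that $Z_0$ meets the two divisors properly, so that \lemref{lem:Pull-D} applies and $\pi^*\Delta_W$ is defined. These facts rest on the admissibility hypotheses on the cycles; once they are secured, the existence of a dominating component $Z_0$ (via finiteness of the normalization) together with the two lemmas does the remaining work. This is exactly why the proof is ``almost identical'' to that of \cite[Proposition~2.4]{KP}.
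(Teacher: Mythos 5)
Correct, and essentially the paper's own route: the paper defers to \cite[Proposition~2.4]{KP}, whose proof is exactly your construction --- pick an irreducible component of $p^{-1}(\ov{W})$ in $\ov{V}^N$ dominating $\ov{W}$, restrict the divisor inequality via \lemref{lem:Pull-D}, and descend along the finite surjective map $Z_0^N \to \ov{W}^N$ via \lemref{lem:cancel}. The one justification to tighten: the generic point of $\ov{W}$ avoids $D \times \ov{Y}$ not because of any admissibility assumed on $W$ (none is), but because the modulus inequality for $V$ forces $\nu_V^{-1}(D \times \ov{Y}) \subseteq \nu_V^{-1}(X \times E)$, whence $V \cap (D \times Y) = \emptyset$ and a fortiori $W \cap (D \times Y) = \emptyset$; this also secures that $\nu_W^*(D \times \ov{Y})$ is a well-defined effective Cartier divisor, and it makes the proper intersection of $Z_0$ with $\nu_V^*(D \times \ov{Y})$ automatic once $Z_0 \not\subset \nu_V^{-1}(X \times E)$.
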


\begin{defn}[{\cite{BS}, \cite{KS}}]\label{defn:partial complex}Let $(X,D)$ be a scheme with an effective divisor. For $r \in \mathbb{Z}$ and $n \geq 0$, let $\un{z}_r (X|D, n)$ be the free abelian group on integral closed subschemes $V \subset X \times \square^n$ of dimension $r+n$ satisfying the following conditions:

\begin{enumerate}
\item (Face condition) for each face $F \subset \square^n$, $V$ intersects $X \times F$ properly. 
\item (Modulus condition) $V$ has modulus $D$ relative to $F_n ^1$ on $X \times \square^n$. 
\end{enumerate}
\end{defn}

We usually drop the phrase ``relative to $F_n^1$'' for simplicity. A cycle in $\un{z}_r (X|D, n)$ is called an \emph{admissible cycle with modulus $D$}. Using Proposition~\ref{prop:CL*}, one checks that if $V$ has modulus $D$ on $X \times \square^n$, then $({\rm Id}_X \times \iota_F)^* (V)$ has modulus $D$ on $X \times F \simeq X \times \square^d$, where $\iota_F: F \hookrightarrow \square^n$ and $d = \dim F$ (see \cite[Lemma 2.4]{BS}). We deduce that $(n \mapsto \un{z}_r(X|D,n))$ is a cubical abelian group. In particular, the groups $\un{z}_r (X|D, n)$ form a complex with the boundary map $\partial= \sum_{i=1} ^n (-1)^i (\partial_i ^{\infty} - \partial_i ^0)$, where $\partial_i ^{\epsilon} = \iota_{n, i, \epsilon} ^*$.

\begin{defn}[{\cite{BS}, \cite{KS}}]
The complex $(z_r(X|D, \bullet), \partial)$ is the nondegenerate complex associated to $(n\mapsto \un{z}_r(X|D,n))$, i.e., $z_r(X|D, n): = {\un{z}_r(X|D, n)}/{\un{z}_r(X|D, n)_{\dgn}}.$ The homology $\CH_r(X|D, n): = H_n (z_r(X|D, \bullet))$ for $n \geq 0$ is called \emph{higher Chow group} of $X$ with modulus $D$. If $X$ is equidimensional of dimension $d$, for $q \geq 0$, we write $\CH^q(X|D, n) = \CH_{d-q}(X|D, n)$.
\end{defn}

\begin{remk}\label{remk:RMC-MC}When $D= \emptyset$, this is the cubical higher Chow group of \cite{Bl1}, while if $X= Y \times \mathbb{A}^1$ with $D= \{ t^{m+1}= 0 \}$, where $t \in \mathbb{A}^1$, this is the additive higher Chow group of $Y$ with modulus $m$ of \cite{BE1}, \cite{P2}, \cite{R}. If $D_2 \ge D_1$ are two effective Cartier divisors on $X$, there is a canonical inclusion $z_r(X| D_2, \bullet) \inj {z}_r(X| D_1, \bullet)$, thus a canonical map $\CH_r(X|D_2, n) \to \CH_r(X|D_1, n)$. In particular, since $\emptyset \le D$, we have $\CH_r(X|D, n) \to \CH_r(X, n).$
\end{remk}

\subsection{Functorial properties}\label{sect:PPFP} We say that a morphism of schemes with effective divisors is \emph{proper} or \emph{flat}, if the underlying morphism of $k$-schemes is proper or flat.

\begin{lem}\label{lem:pre pro image} Let $f: (Y,E) \to (X,D)$ be a proper morphism of schemes with effective divisors. Let $Z \subset Y \times \square^n$ be a closed irreducible subscheme with modulus $E$, and let $W:= f(Z) \subset X \times \square^n$. Then, $W$ has modulus $D$.
\end{lem}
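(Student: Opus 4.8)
The plan is to deduce the statement from the two cancellation lemmas above by spreading the modulus inequality for $Z$ across a commutative square of normalizations lying over the proper morphism $g := f \times \id_{\ov\square^n} \colon Y \times \ov\square^n \to X \times \ov\square^n$.

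First I would set up the geometry. Let $\ov Z \subset Y \times \ov\square^n$ and $\ov W \subset X \times \ov\square^n$ be the Zariski closures of $Z$ and $W$. Since $f$, hence $g$, is proper, $g(\ov Z)$ is closed and irreducible and equals $\ov W$; thus $g$ restricts to a proper, dominant, surjective morphism $\ov Z \to \ov W$ of integral schemes. By the universal property of normalization (a dominant morphism from the normal integral $\ov Z^N$ into the integral $\ov W$ factors uniquely through $\ov W^N$, because integral elements of $k(\ov W) \subseteq k(\ov Z^N)$ land in the integrally closed local rings of $\ov Z^N$) I obtain a lift $h \colon \ov Z^N \to \ov W^N$ with $\nu_W \circ h = g \circ \nu_Z$. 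Cancelling the separated $\nu_W$ shows $h$ is proper, and since $h$ is dominant with irreducible target it is surjective. Here I must check that $\ov Z^N$ and $\ov W^N$ are normal integral $k$-schemes (essentially) of finite type, so that Lemma~\ref{lem:cancel} applies to $h$.

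Next I would record that the relevant Cartier pullbacks are well defined. The divisor $\nu_W^*(X \times F_n^1)$ makes sense because $W$ meets $X \times \square^n$, so $\ov W \not\subset X \times F_n^1$; and $\nu_W^*(D \times \ov\square^n)$ makes sense because if $\ov W \subset D \times \ov\square^n$ then $Z \subset f^{-1}(\Supp D) \times \square^n \subset \Supp(E) \times \square^n$ (using $f^*(D) \le E$, so $\Supp(f^*D) \subseteq \Supp(E)$), which would force $\ov Z \subset \Supp(E \times \ov\square^n)$ and contradict the very definition of ``$Z$ has modulus $E$''. With this in hand, set $\mathcal D := \nu_W^*(X \times F_n^1) - \nu_W^*(D \times \ov\square^n)$ on $\ov W^N$; the goal is precisely $\mathcal D \ge 0$. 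Pulling back along $h$ and using $\nu_W \circ h = g \circ \nu_Z$ together with $g^*(X \times F_n^1) = Y \times F_n^1$ and $g^*(D \times \ov\square^n) = f^*(D) \times \ov\square^n$, I find $h^* \mathcal D = \nu_Z^*(Y \times F_n^1) - \nu_Z^*(f^*(D) \times \ov\square^n)$. Since $E - f^*(D) \ge 0$, Lemma~\ref{lem:Pull-D} gives $\nu_Z^*(f^*(D) \times \ov\square^n) \le \nu_Z^*(E \times \ov\square^n)$, and the modulus condition for $Z$ gives $\nu_Z^*(E \times \ov\square^n) \le \nu_Z^*(Y \times F_n^1)$; chaining these yields $h^* \mathcal D \ge 0$.

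Finally I would descend along $h$. Applying Lemma~\ref{lem:cancel} to the dominant morphism $h$ of normal integral $k$-schemes and the divisor $\mathcal D$ on $\ov W^N$: surjectivity of $h$ guarantees that the generic points of $\Supp(\mathcal D)$ lie in $h(\ov Z^N)$, so $h^* \mathcal D \ge 0$ forces $\mathcal D \ge 0$, which is exactly the assertion that $W$ has modulus $D$. The conceptual heart is this cancellation step, which converts effectivity upstairs into effectivity downstairs; I expect the main obstacle to be the careful construction of $h$ — its existence via the universal property and its properness and surjectivity — rather than the divisor bookkeeping, which reduces cleanly to Lemmas~\ref{lem:Pull-D} and~\ref{lem:cancel}.
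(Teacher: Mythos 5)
Your proof is correct and takes essentially the same route as the paper's: lift $f \times \id_{\ov{\square}^n}$ to a morphism $h \colon \ov{Z}^N \to \ov{W}^N$ via the universal property of normalization, chain the pullback inequalities coming from $f^*(D) \le E$ and the modulus condition on $Z$, and then descend effectivity along $h$ using Lemma~\ref{lem:cancel}. The extra verifications you include—well-definedness of the Cartier pullbacks on $\ov{W}^N$ and the properness and surjectivity of $h$—are correct refinements of points the paper leaves implicit; in fact the paper calls $h$ ``generically finite,'' which need not hold when $\dim W < \dim Z$, whereas your argument correctly relies only on dominance and surjectivity, which is all Lemma~\ref{lem:cancel} requires.
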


\begin{proof} Let $\ov{Z} \subset Y \times \ov{\square}^n$ and $\ov{W} \subset X \times \ov{\square}^n$ be the Zariski closures, and let $\nu_Z: \ov{Z}^N \to Y \times \ov{\square}^n$ and $\nu_W: \ov{W}^N \to X \times \ov{\square}^n$ be the normalizations of $\ov{Z}$ and $\ov{W}$, composed with the closed immersions, respectively. Since $\ov{Z} \to \ov{W}$ is dominant, the universal property of normalization gives a morphism $h: \ov{Z}^N \to \ov{W}^N$ such that $(f \times {\rm Id}_{\ov{\square} ^n}) \circ \nu_Z = \nu_W \circ h$. This gives the identities $h^* \nu_W^* (D \times \ov{\square} ^n) = \nu_Z ^* (f^* (D) \times \ov{\square}^n)$ and $h^* \nu_W^* (X \times F_n ^1) = \nu_Z^* (Y \times F_n ^1)$. Hence, we get 
\[
h^* \nu_W ^* (D \times \ov{\square}^n) = \nu_Z ^* (f^* (D) \times \ov{\square}^n) \leq ^{\dagger} \nu_Z^* (E \times \ov{\square}^n) \leq ^{\ddagger} \nu_Z ^* (Y \times F_n^1) = h^* \nu_W ^* (X \times F_n ^1),
\]
where $\dagger$ is the definition of morphisms of schemes with effective divisors in \S \ref{sect:CWM} and $\ddagger$ holds since $Z$ has modulus $E$. Since $h$ is a generically finite proper morphism of normal integral schemes (in particular surjective), we deduce $\nu_W ^* (D \times \ov{\square}^n) \leq \nu_W^* (X \times F_n ^1)$  by Lemma \ref{lem:cancel}.\end{proof}

\begin{lem}\label{lem:projective image}Let $f: Y \to X$ be a proper morphism of quasi-projective $k$-varieties. Let $D \subset X$ be an effective Cartier divisor such that $f(Y) \not \subset D$. Let $Z \in z^q (Y|f^* (D), n)$ be an irreducible cycle. Let $W = f(Z)$ on $X \times \square^n$. Then, $W \in z^s (X|D, n)$, where $s= \codim_{X \times \square^n} (W)$. 
\end{lem}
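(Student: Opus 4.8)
The plan is to verify, for the irreducible closed subscheme $W = g(Z) \subset X \times \square^n$ (where $g := f \times \id_{\square^n}$, which is proper, so $W$ is closed and irreducible), the two defining conditions of Definition~\ref{defn:partial complex}: the modulus condition and the face condition. Since $f(Y) \not\subset D$, the pullback $f^*(D)$ is a well-defined effective Cartier divisor on $Y$, and $f : (Y, f^*(D)) \to (X,D)$ is a proper morphism of schemes with effective divisors, the required inequality $f^*(D) \le f^*(D)$ being trivial.

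First I would dispatch the modulus condition, which costs nothing: as $Z \in z^q(Y|f^*(D), n)$ has modulus $f^*(D)$ and $f : (Y, f^*(D)) \to (X,D)$ is proper, Lemma~\ref{lem:pre pro image} shows immediately that $W = g(Z)$ has modulus $D$.

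The substance is the face condition. Fix a face $F \subset \square^n$, set $c := \codim_{\square^n}(F)$, and write $Z_F := Z \cap (Y \times F)$ and $W_F := W \cap (X \times F)$; the goal is $\dim\big(W \cap (X\times F)\big) \le \dim W - c$. Because $g^{-1}(X\times F) = Y \times F$ and $g|_Z : Z \to W$ is surjective, one gets $W_F = g(Z_F)$, while the face condition on $Z$ yields $\dim Z_F \le \dim Z - c$. Let $\delta := \dim Z - \dim W \ge 0$ be the dimension of the generic fibre of $g|_Z$. The crux is to prove $\dim Z_F - \dim W_F \ge \delta$, for then $\dim W_F \le \dim Z_F - \delta \le \dim Z - c - \delta = \dim W - c$, as required. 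To establish this I would take a general closed point $w = (x,t)$ of a top-dimensional component of $W_F$, so $t \in F$. Since $g^{-1}(w) = f^{-1}(x) \times \{t\} \subset Y \times \{t\} \subset Y \times F$, the fibre $g^{-1}(w) \cap Z_F$ coincides with the full fibre $g^{-1}(w) \cap Z$ of $g|_Z$ over $w$; by the theorem on dimension of fibres this has dimension $\ge \delta$ at every point of the image $W$. On the other hand, computing $g^{-1}(w) \cap Z_F$ through the components of $Z_F$ dominating that top-dimensional component of $W_F$ gives $\dim\big(g^{-1}(w)\cap Z_F\big) \le \dim Z_F - \dim W_F$. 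Combining the two inequalities gives $\delta \le \dim Z_F - \dim W_F$, completing the face condition. Finally, since $\dim W = \dim X + n - s$ by the definition $s = \codim_{X\times\square^n}(W)$, the admissible cycle $W$ lies in $z_{\dim X - s}(X|D,n) = z^s(X|D,n)$.

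The main obstacle is exactly the dimension bookkeeping in the non-generically-finite case $\delta > 0$: the naive estimate $\dim W_F \le \dim Z_F \le \dim Z - c$ is too weak, and one must feed in the fibre-dimension comparison above. Its key point is the observation that the fibres of $g|_Z$ sit entirely over a single point of $\square^n$, and are therefore unchanged upon intersecting with $Y \times F$, which is what transfers the fibre-dimension lower bound from $Z \to W$ to $Z_F \to W_F$.
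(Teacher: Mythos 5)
Your proof is correct, but your treatment of the face condition takes a genuinely different route from the paper's. The modulus condition you dispatch exactly as the paper does, via Lemma~\ref{lem:pre pro image} applied to the proper morphism $(Y, f^*(D)) \to (X,D)$ of schemes with effective divisors. For the face condition, the paper argues by non-containment plus induction: for a codimension-one face $F$, proper intersection of the irreducible $W$ with $X \times F$ is equivalent to $W \not\subset X \times F$, and containment would force $Z \subset f_n^{-1}(W) \subset f_n^{-1}(X \times F) = Y \times F$ (where $f_n = f \times {\rm Id}_{\square^n}$), contradicting the face condition on $Z$; faces of higher codimension are then handled by induction on their codimension, implicitly using the cubical restrictions of $Z$ to faces. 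You instead prove the dimension bound $\dim\bigl(W \cap (X\times F)\bigr) \le \dim W - c$ directly and uniformly for every face of codimension $c$, by means of the fiber-dimension theorem: since each fiber of $f_n$ over a point of $X \times F$ sits over a single point of $\square^n$ and hence lies entirely in $Y \times F$, the lower bound $\delta = \dim Z - \dim W$ on fiber dimensions transfers from $Z \to W$ to $Z_F \to W_F$, yielding $\dim W_F \le \dim Z_F - \delta \le \dim Z - c - \delta = \dim W - c$. What each approach buys: yours is a single self-contained estimate that makes the dimension bookkeeping in the non-generically-finite case $\delta > 0$ fully explicit, precisely the point the paper's terse inductive step leaves to the reader; the paper's route is more elementary, needing only the identity $f_n^{-1}(X\times F) = Y\times F$ and irreducibility of $W$ rather than the fiber-dimension theorem, at the cost of an induction over faces. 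One minor remark: your general-point argument tacitly assumes $W \cap (X \times F) \neq \emptyset$; the empty case is of course trivially proper, so nothing is lost.
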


\begin{proof}
It generalizes \cite[Proposition 5.2]{KP}. $W$ has modulus $D$ by Lemma \ref{lem:pre pro image}. We prove that $W$ intersects all faces properly. For codimension $1$ faces $F\subset \square^n$, note that $W$ intersects $X \times F$ properly if and only if $W\not \subset X \times F $. Suppose $W \subset X \times F$. 

Let $f_n= f \times {\rm Id}_{\square^n}$. Then, $Z \subset f_n ^{-1} (f_n (Z)) = f_n ^{-1} (W) \subset f_n ^{-1} (X \times F) = Y \times F$.  But $Z$ intersects $Y \times F$ properly, so $Z \not \subset Y \times F$, which is a contradiction. Hence, $W$ intersects $X \times F$ properly when $\codim _{\square^n} F = 1$. For higher codimensional faces, we apply induction on codimension of the given face, together with the above codimension $1$ case. Since $Z$ intersects all faces of any codimension properly, we deduce the same for $W$.
\end{proof}

\begin{prop}[Proper push-forward]\label{prop:PFF}
Let $f: (Y,E) \to (X,D)$ be a proper morphism of schemes with effective divisors. Then, it induces $f_*: {z}_r(Y|E, \bullet) \to {z}_r(X|D, \bullet)$ and $f_*: \CH_r(Y|E, n) \to \CH_r(X|D, n)$ such that $(f\circ g)_* = f_* \circ g_*$.
\end{prop}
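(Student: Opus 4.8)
The plan is to imitate the construction of proper push-forward on Bloch's cycle complex, using Lemmas~\ref{lem:pre pro image} and~\ref{lem:projective image} to control the modulus and face conditions. Write $f_n := f \times \id_{\square^n} : Y \times \square^n \to X \times \square^n$; this is proper because $f$ is. For an irreducible generator $Z \in \un{z}_r(Y|E, n)$, which has dimension $r+n$, I would set
\[
f_*(Z) = \begin{cases} [k(Z):k(W)]\, W & \text{if } \dim W = r+n,\\ 0 & \text{if } \dim W < r+n, \end{cases}
\]
where $W := f_n(Z)_{\red}$, and extend $\Z$-linearly. This is just the classical proper push-forward of algebraic cycles along $f_n$.

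Next I would verify that $f_*$ preserves admissibility, i.e. lands in $\un{z}_r(X|D,n)$. The vanishing case is trivial, so assume $\dim W = r+n$. The modulus condition for $W$ is precisely the conclusion of Lemma~\ref{lem:pre pro image} (applied to the proper morphism of seds $f:(Y,E) \to (X,D)$ and the cycle $Z$ of modulus $E$). The proper intersection of $W$ with every face $X \times F$ is the face-condition argument of Lemma~\ref{lem:projective image}: if $W \subset X \times F$ then $Z \subset f_n^{-1}(W) \subset f_n^{-1}(X \times F) = Y \times F$, contradicting admissibility of $Z$; this argument is purely set-theoretic and uses no quasi-projectivity, and the higher-codimension faces follow by the same induction. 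Hence $W$, and thus $f_*(Z)$, is admissible.

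I would then show $f_*$ is a morphism of cubical abelian groups, hence a chain map. The essential point is that proper push-forward commutes with the face operators $\partial_i^\epsilon = \iota_{n,i,\epsilon}^*$ and with the degeneracy maps. For each face immersion one has a cartesian square whose vertical arrows are $f \times \id$ and whose horizontal arrows are the face inclusions; since $Z$ and $W$ meet faces properly, all restrictions are defined, and the identity $\iota^* f_* = f_* \iota^*$, with matching multiplicities, is the standard proper base-change/projection formula for cycles, untouched by the modulus. The same base-change identity forces degenerate cycles to map to degenerate cycles, so $f_*$ descends to $f_* : z_r(Y|E,\bullet) \to z_r(X|D,\bullet)$ and induces $f_* : \CH_r(Y|E,n) \to \CH_r(X|D,n)$ on homology. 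Functoriality $(f \circ g)_* = f_* \circ g_*$ then follows from the multiplicativity of residue-field degrees together with the corresponding functoriality of proper push-forward of cycles.

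The main obstacle is the chain-map verification: one must show that proper push-forward commutes with the boundary maps while every intermediate cycle remains admissible, so that all face restrictions are legitimately defined. The delicate part is the multiplicity bookkeeping in the base-change identity $\iota^* f_* = f_* \iota^*$; once admissibility is secured by the two preceding lemmas, this reduces to the classical compatibility of proper push-forward with refined intersection along the faces.
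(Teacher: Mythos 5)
Your proposal is correct and follows essentially the same route as the paper: the same definition of $f_*$ via $[k(Z):k(W)]\cdot[W]$ (or $0$ in the dimension-dropping case), with the modulus condition secured by Lemma~\ref{lem:pre pro image} and the face condition by the argument of Lemma~\ref{lem:projective image}. The chain-map and degeneracy verifications you spell out are exactly what the paper compresses into ``one checks immediately,'' resting on the same classical compatibility of proper push-forward with restriction to faces.
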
 

\begin{proof}For an irreducible $Z \subset Y \times \square^n$ with $W:= f(Z)$, we define $f_* ([Z]) := 0$ if $\dim (W) < \dim (Z)$, and $[k(Z):k(W)]\cdot [W]$ if $\dim (W) = \dim (Z)$ (see \cite{Fulton}). One checks immediately that $f_*$ respects the face condition by using the argument as in Lemma \ref{lem:projective image}, while the modulus condition for $W$ holds by Lemma \ref{lem:pre pro image}. That $(f\circ g)_* = f_* \circ g_*$ is immediate.
\end{proof}

The following case of proper push-forward will play an important role in \S \ref{section:0-cycles}.

\begin{cor}\label{cor:induced map}
Let $(X, D)$ be a scheme with an effective divisor and let $f: Y \to X$ be a proper map such that $f^{-1}(D) = \emptyset$. Then the push-forward map $f_*: \CH_r(Y, n) \to \CH_r(X, n)$ factors into $\CH_r(Y, n) \xrightarrow{\iota_*} \CH_r(X|D, n) \to \CH_r(X, n)$. 
\end{cor}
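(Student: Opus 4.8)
The plan is to realize the asserted factorization by applying the proper push-forward of Proposition~\ref{prop:PFF} to a suitable morphism of schemes with effective divisors and then composing with the forget-the-modulus map of Remark~\ref{remk:RMC-MC}. First I would observe that the hypothesis $f^{-1}(D) = \emptyset$ is precisely what allows one to view $f$ as a morphism of seds $f\colon (Y, \emptyset) \to (X, D)$, as recorded in \S\ref{sect:CWM}: here $f^*(D)$ is the zero divisor, so the condition $f^*(D) \le \emptyset$ holds trivially. Since $\CH_r(Y, n) = \CH_r(Y|\emptyset, n)$ by definition (Remark~\ref{remk:RMC-MC}), Proposition~\ref{prop:PFF} applied to this morphism produces a push-forward $\iota_*\colon \CH_r(Y, n) = \CH_r(Y|\emptyset, n) \to \CH_r(X|D, n)$ on complexes and on homology. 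The underlying claim that $f(Z)$ genuinely has modulus $D$ for each admissible $Z$ on $Y$ is exactly the case $E = \emptyset$ of Lemma~\ref{lem:pre pro image}, noting that every cycle vacuously has modulus $\emptyset$ since the left-hand side of the defining inequality is then the zero divisor.

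Next I would invoke Remark~\ref{remk:RMC-MC} to supply the canonical map $\CH_r(X|D, n) \to \CH_r(X, n)$ induced by the inclusion of complexes $z_r(X|D, \bullet) \inj z_r(X, \bullet)$; this is the second arrow in the claimed factorization. It then remains to identify the composite $\CH_r(Y, n) \xrightarrow{\iota_*} \CH_r(X|D, n) \to \CH_r(X, n)$ with the ordinary proper push-forward $f_*$ of Bloch's higher Chow groups. I would carry this out at the level of cycles: both the modulus push-forward of Proposition~\ref{prop:PFF} and the classical push-forward send an irreducible $Z$ to $0$ when $\dim f(Z) < \dim Z$ and to $[k(Z):k(f(Z))]\cdot [f(Z)]$ otherwise, while the forget-the-modulus map is the identity on cycles. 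Hence the two composites coincide as maps of complexes $z_r(Y, \bullet) \to z_r(X, \bullet)$ and therefore agree on homology.

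The argument is essentially formal, so I do not expect a serious obstacle; the one point requiring care is the verification that the cycle-level rule defining the modulus push-forward in Proposition~\ref{prop:PFF} is literally the same degree-times-image formula as that of the classical proper push-forward, which is immediate because the inclusion $z_r(X|D, \bullet) \inj z_r(X, \bullet)$ respects that formula.
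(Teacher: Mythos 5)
Your proposal is correct and follows essentially the same route as the paper: viewing $f$ as a proper morphism of seds $(Y,\emptyset)\to(X,D)$ via the convention in \S\ref{sect:CWM}, applying Proposition~\ref{prop:PFF} together with the identification $\CH_r(Y|\emptyset,n)=\CH_r(Y,n)$, and composing with the canonical map of Remark~\ref{remk:RMC-MC}. Your extra cycle-level verification that the composite agrees with the classical push-forward is left implicit in the paper but is immediate, exactly as you say.
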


\begin{proof}
The map $f:(Y, \emptyset) \to (X,D)$ is a proper morphism of seds. So, the corollary follows from Proposition \ref{prop:PFF} because $\CH_r (Y|\emptyset, n) = \CH_r (Y, n)$. 
\end{proof}

\begin{prop}[Flat pull-back]\label{prop:FPB}
Let $f: Y \to X$ be a flat morphism of relative dimension $d$ and $D$ an effective Cartier divisor on $X$.
Then, it induces $f^*: {z}_r(X|D, \bullet) \to {z}_{d+r}(Y|f^*(D), \bullet)$ such that $(f \circ g)^* = g^* \circ f^*$.
\end{prop}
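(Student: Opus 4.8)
The plan is to define $f^*$ as cycle-theoretic inverse image along $f_n := f \times {\rm Id}_{\square^n} : Y \times \square^n \to X \times \square^n$, which is again flat of relative dimension $d$, and then to check that it preserves admissibility, is a chain map, and is functorial. For an integral $V \subset X \times \square^n$ of dimension $r+n$ I would set $f^*([V]) := f_n^*([V])$ in the sense of \cite{Fulton}. Since $f_n$ is flat with fibres of constant dimension $d$, the inverse image $f_n^{-1}(V)$ is equidimensional of dimension $r+n+d$, so every integral component $V'$ of its support already lies, for dimension reasons, in $z_{d+r}(Y|f^*(D), n)$; what remains is to verify the face and modulus conditions for each $V'$. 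Functoriality $(f\circ g)^* = g^* \circ f^*$ I would deduce from the classical compatibility of flat pull-back with composition, since $(f\circ g)_n = f_n \circ g_n$ and the intersection multiplicities multiply, cf. \cite{Fulton}.

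The face condition is routine: for a face $F \subset \square^n$ one has $f_n^{-1}(X \times F) = Y \times F$, and since flat pull-back preserves codimension, the proper intersection of $V$ with $X \times F$ is inherited by $f_n^*([V])$, analogously to \lemref{lem:projective image} but now in the flat direction.

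The main step is the modulus condition. Let $V'$ be an integral component of $f_n^{-1}(V)$; by the equidimensionality above $\dim V' = r+n+d$, while every fibre of $f_n$ has dimension $d$, so $\dim f_n(V') \ge r+n$ and hence $V'$ dominates $V$. Setting $\ov f_n := f \times {\rm Id}_{\ov{\square}^n}$ and passing to closures $\ov{V'} \subset Y \times \ov{\square}^n$, $\ov V \subset X \times \ov{\square}^n$, the induced map $\ov{V'} \to \ov V$ is dominant, so as in \lemref{lem:pre pro image} the universal property of normalization yields $h : \ov{V'}^N \to \ov V^N$ with $\ov f_n \circ \nu_{V'} = \nu_V \circ h$. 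Because $\ov f_n = f \times {\rm Id}$ one has $\ov f_n^*(D \times \ov{\square}^n) = f^*(D) \times \ov{\square}^n$ and $\ov f_n^*(X \times F_n^1) = Y \times F_n^1$, giving
\[
\nu_{V'}^*(f^*(D) \times \ov{\square}^n) = h^* \nu_V^*(D \times \ov{\square}^n), \qquad \nu_{V'}^*(Y \times F_n^1) = h^* \nu_V^*(X \times F_n^1).
\]
Since $V$ has modulus $D$, the divisor $\nu_V^*(X \times F_n^1) - \nu_V^*(D \times \ov{\square}^n)$ is effective on $\ov V^N$, and as $h$ is a dominant morphism of normal integral schemes its pull-back along $h$ stays effective; this is precisely $\nu_{V'}^*(Y \times F_n^1) - \nu_{V'}^*(f^*(D) \times \ov{\square}^n) \ge 0$, i.e. $V'$ has modulus $f^*(D)$. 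I want to stress that this direction is \emph{easier} than \lemref{lem:pre pro image}: there one pushes an inequality down along $h$ and needs \lemref{lem:cancel}, whereas here one only pulls an effective divisor back, which preserves effectivity automatically.

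Finally, to see $f^*$ is a chain map I would invoke flat base change: for each $i,\epsilon$ the square comparing $\iota_{n,i,\epsilon}$ on $Y \times \square^{\bullet}$ and on $X \times \square^{\bullet}$ is Cartesian with flat vertical maps $f \times {\rm Id}$, so $f^* \circ \partial_i^\epsilon = \partial_i^\epsilon \circ f^*$; the same base change shows $f^*$ carries degenerate cycles to degenerate cycles, so it descends to the nondegenerate complexes and to homology. The only genuinely new ingredient beyond the classical higher Chow setting is the modulus verification, and its one delicate point is the domination of $V$ by each component $V'$, which is exactly what lets the normalization lift $h$ exist.
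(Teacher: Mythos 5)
Your proposal is correct and follows essentially the same route as the paper's proof: define $f^*$ via Fulton's flat pull-back along $f \times {\rm Id}_{\square^n}$, check the face condition as in \cite{Bl1}, and for the modulus condition lift the dominant map from each component $V'$ of $f_n^{-1}(V)$ onto $V$ to a map $h$ of normalizations, then pull back the modulus inequality using $\ov{f}_n^*(D \times \ov{\square}^n) = f^*(D) \times \ov{\square}^n$ and $\ov{f}_n^*(X \times F_n^1) = Y \times F_n^1$. The only differences are elaborations, not a new argument: you justify the domination $V' \to V$ by a dimension count where the paper simply asserts it, and you spell out the chain-map and degeneracy checks that the paper delegates to the classical case.
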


\begin{proof}For an integral admissible closed subscheme $Z \subset X \times \square^n$, we let $f^* ([Z])$ be the cycle associated to the scheme $f^{-1} (Z)$ in the sense of \cite{Fulton}. As in \cite{Bl1}, one checks that $f^*$ so defined respects the face condition. So, it remains to verify the modulus condition. For this, let $W$ be an irreducible component of $f^* (Z)$. Let $\ov{W} \subset Y \times \ov{\square}^n$ and $\ov{Z} \subset X \times \ov{\square}^n$ be the Zariski closures of $W$ and $Z$, and let $\nu_W : \ov{W}^N \to Y \times \ov{\square}^n$ and $\nu_Z : \ov{Z}^N \to X \times \ov{\square}^n$ be the normalizations of the Zariski closures composed with the closed immersions. The dominant map $W \to Z$ induces the map $h: \ov{W}^N \to \ov{Z}^N$ by the universal property of normalization, satisfying $(f \times {\rm Id}_{\ov{\square}^n} )\circ \nu_W = \nu_Z \circ h$. That $Z$ has modulus $D$ means $\nu_Z^* (D \times \ov{\square}^n) \leq \nu_Z ^* (X \times F_n ^1)$. Applying $h^*$ and using the above equality, we get $\nu_W ^* (f^* (D) \times \ov{\square}^n) \leq \nu_W ^* (Y \times F_{n} ^1)$ as desired, because $f^* (X \times F_n ^1) = Y \times F_n ^1$. That $(f \circ g)^* = g^* \circ f^*$ is obvious.
\end{proof}

Combined with \cite[Proposition 1.7]{Fulton}, we obtain: 

\begin{prop}\label{prop:pp}
Let $g: (X', D') \to (X, D)$ be a proper morphism of schemes with effective Cartier divisors, $f: Y \to X$ a flat morphism of schemes. Let $Y':= X' \times_X Y$ with the projections $f': Y' \to X'$ and $g': Y' \to Y$. Here $g'$ induces a proper morphism $(Y', {f'}^* (D')) \to (Y, f^* (D))$ of schemes with effective Cartier divisors.
 Then, $f^* g_* = g'_* f'^*$ as chain maps $z_r(X'|D, \bullet)\to  z_{r+d}(Y|f^*(D), \bullet)$.
\end{prop}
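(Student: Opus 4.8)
The plan is to reduce the assertion to the corresponding compatibility of flat pull-back and proper push-forward for ordinary algebraic cycles, namely \cite[Proposition 1.7]{Fulton}, and then to observe that passing to cycles with modulus imposes nothing new. First I would record the geometry: since $f$ is flat of relative dimension $d$, its base change $f': Y' \to X'$ is again flat of relative dimension $d$, so $f'^*$ raises cycle dimension by $d$, matching $f^*$; the square is Cartesian by the definition $Y' = X'\times_X Y$, and $g'$ is proper because $g$ is. The key structural point, already visible in the proofs of Propositions~\ref{prop:PFF} and \ref{prop:FPB}, is that all four operations $g_*, f^*, g'_*, f'^*$ are by construction \emph{restrictions} to the modulus subcomplexes of Fulton's proper push-forward and flat pull-back on the ambient cycle groups of $X'\times\square^n$, $X\times\square^n$, $Y'\times\square^n$ and $Y\times\square^n$.

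Next I would check that the two composites have the claimed common source and target. By Proposition~\ref{prop:PFF}, $g_*$ maps $z_r(X'|D',\bullet)$ into $z_r(X|D,\bullet)$, and by Proposition~\ref{prop:FPB}, $f^*$ then maps this into $z_{r+d}(Y|f^*(D),\bullet)$; hence $f^* g_*$ is a chain map into $z_{r+d}(Y|f^*(D),\bullet)$. Symmetrically, $f'^*$ maps $z_r(X'|D',\bullet)$ into $z_{r+d}(Y'|f'^*(D'),\bullet)$ by Proposition~\ref{prop:FPB}, and $g'_*$, being a proper morphism of seds $(Y', f'^*(D')) \to (Y, f^*(D))$ by hypothesis, maps this into $z_{r+d}(Y|f^*(D),\bullet)$ by Proposition~\ref{prop:PFF}. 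Thus both $f^* g_*$ and $g'_* f'^*$ are chain maps landing in the \emph{same} subgroup $z_{r+d}(Y|f^*(D),\bullet)$, which is a subgroup of the full group of cycles on $Y\times\square^n$ satisfying the face condition, with injective inclusion. It therefore suffices to prove the identity in these ambient cycle groups. There it is exactly \cite[Proposition 1.7]{Fulton}, applied in each cubical degree $n$ to the Cartesian square obtained by crossing the given one with $\square^n$ (note that $f\times\id$ remains flat of relative dimension $d$ and $g\times\id$ remains proper). Evaluating degreewise and using that both sides are chain maps yields $f^* g_* = g'_* f'^*$ as chain maps.

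I do not anticipate a genuine obstacle; the only point demanding care is confirming that the right-hand composite is well-defined with the correct modulus, i.e. that $g'$ really is a morphism of seds $(Y', f'^*(D')) \to (Y, f^*(D))$. This follows from the commutativity $g'^* f^*(D) = f'^* g^*(D)$ of divisor pull-backs along the Cartesian square, combined with $g^*(D) \le D'$ (the seds condition on $g$) and the monotonicity of flat pull-back of effective Cartier divisors; in any case it is built into the statement. Once both composites are seen to land in $z_{r+d}(Y|f^*(D),\bullet)$, the desired equality is immediate from the ordinary-cycle identity, since the equality of the underlying cycles in $z_{r+d}(Y,\bullet)$ forces equality in the modulus subgroup.
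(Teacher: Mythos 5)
Your proposal is correct and follows essentially the same route as the paper, which offers no separate argument beyond the remark that the proposition is obtained by combining the flat pull-back and proper push-forward constructions (Propositions~\ref{prop:PFF} and \ref{prop:FPB}) with \cite[Proposition~1.7]{Fulton} applied degreewise to the Cartesian square crossed with $\square^n$. Your verification that both composites land in the subcomplex $z_{r+d}(Y|f^*(D),\bullet)$ and that $g'$ is indeed a morphism of seds via $g'^*f^*(D) = f'^*g^*(D) \le f'^*(D')$ is a sound elaboration of what the paper leaves implicit in the statement.
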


\section{Module structure over the Chow ring}\label{sec:module}
In this section, we prove that the higher Chow groups with modulus on smooth quasi-projective schemes are graded modules over the Chow ring. This result also improves \cite[Theorem~4.10]{KL}, where it was shown that the additive higher Chow groups are modules over the Chow ring of smooth projective varieties. The projectivity assumption in \emph{loc.cit.} was required because one needed a stronger moving result than Theorem \ref{thm:ML2} to take care of intersection of the closure of cycles with faces in $\ov{\square}^n$. We do not know if this stronger moving result works in the smooth quasi-projective case. We get around this issue in this note by combining Proposition \ref{prop:CL*} with some strategies in \cite[\S 4]{KL}. 

\subsection{Bloch's moving lemma}\label{section:BML}

Recall the following widely used notion:

\begin{defn}\label{defn:complex for moving}
Let $\mathcal{W}$ be a finite set of locally closed subsets of $X$ and let $e: \mathcal{W} \to \mathbb{Z}_{\geq 0}$ be a set function. Let $\un{z}^q (X, \bullet)$ denote the cycle complex of Bloch. Let $\un{z} ^q _{\mathcal{W}, e} (X, n) $ be the subgroup generated by integral cycles $Z \in \un{z}^q (X, n)$ such that for each $W \in \mathcal{W}$ and each face $F \subset \square^n$, we have $\codim _{W \times F} Z \cap (W \times F) \geq q - e(W)$. They form a subcomplex $\un{z}^q _{\mathcal{W},e}(X, \bullet)$ of $\un{z}^q (X, \bullet)$. Modding out degenerate cycles, we obtain the subcomplex $z^q _{\mathcal{W},e} (X, \bullet)\subset z^q (X, \bullet)$. We write $z^q _{\mathcal{W}} (X, \bullet) := z^q_{\mathcal{W}, 0} (X, \bullet)$.
\end{defn}

We use the following moving lemma of Bloch stated in \cite[Lemma 4.2]{Bl1}, where the localization in \emph{loc.cit.} is corrected in the main theorem of \cite{Bl2}, to construct module structure on higher Chow groups with modulus.

\begin{thm}[Bloch]\label{thm:ML2} 
Let $X$ be a smooth quasi-projective $k$-scheme. Let $\mathcal{W}$ be a finite set of locally closed subsets of $X$ and $e: \mathcal{W} \to \mathbb{Z}_{\geq 0}$ be a set-function. Then, the inclusion $z^q _{\mathcal{W}, e}(X, \bullet) \hookrightarrow z^q (X, \bullet)$ is a quasi-isomorphism.
\end{thm}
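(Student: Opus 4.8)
The aim is to show the inclusion induces an isomorphism on homology, equivalently that the quotient complex $z^q(X,\bullet)/z^q_{\mathcal{W},e}(X,\bullet)$ is acyclic. The plan is to follow Bloch and produce a chain-level moving operator: a homotopy $s$ together with a map $\psi: z^q(X,\bullet) \to z^q_{\mathcal{W},e}(X,\bullet)$ satisfying $\partial s + s \partial = \mathrm{id} - \iota \psi$, where $\iota$ is the inclusion. Such an $s$ gives surjectivity on homology (every class is represented in the subcomplex) and injectivity (a subcomplex cycle that bounds in $z^q$ already bounds in $z^q_{\mathcal{W},e}$) at once, so the construction of $s$ is the entire content.

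The operator $s$ would be built from a generic deformation of cycles. The clean model is the transitive-group case: when $X = \mathbb{A}^m$, the translation group $\mathbb{G}_a^m$ acts transitively, and for a fixed integral $Z \subset X \times \square^n$ a general translate $Z + t$ meets each of the finitely many loci $W \times F$ (for $W \in \mathcal{W}$ and $F \subset \square^n$ a face) in codimension $\geq q - e(W)$. First I would establish this transversality at the generic point of the parameter by a dimension count for the incidence variety $\{(z,t) : z \in Z,\ z+t \in W \times F\}$, then spread it out to a dense open locus of parameters; because translations are isomorphisms this works in every characteristic, avoiding the separability hypothesis of Kleiman's theorem. For a general smooth quasi-projective $X$ I would reduce to such a homogeneous situation by a localization/Mayer--Vietoris argument together with a closed embedding and the classical Chow projection technique (replacing $Z$ by a generic projection of its join with a linear centre), so that proper intersection with the $W \times F$ is again forced on a dense open set of deformation parameters.

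To turn the deformation into an element of the cubical complex, I would reparametrize the deformation direction by $\square^1 = \mathbb{P}^1 \setminus \{1\}$, arranging the faces $y = 0$ and $y = \infty$ to recover $Z$ and a generic representative $\psi(Z)$. The closure of the resulting family over $\square^1$ defines $s(Z) \in z^q(X, n+1)$, and a computation of its cubical boundary yields $\partial s(Z) + s(\partial Z) = Z - \psi(Z)$ modulo degenerate cycles. Extending $s$ and $\psi$ linearly over all $Z$ and all $n$ produces the desired homotopy, provided $s(Z)$ is itself admissible, i.e. the family meets every $X \times F \times \square^1$ (and the codimension-one faces of the new $\square^1$) properly; this is secured by imposing the transversality of the previous paragraph simultaneously for the interior and for all face restrictions, which is possible since only finitely many closed conditions on the parameter are involved.

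The step I expect to be the main obstacle is twofold. First is the admissibility of $s(Z)$: one must choose a single deformation that puts $Z$ into good position against every $W \times F$ and \emph{keeps} all of the face restrictions of the homotopy admissible, so the separate generic conditions have to be intersected into one dense open parameter condition and then realized by an actual deformation. Second is the reduction for non-homogeneous $X$ and the descent to a small ground field: when $k$ is finite a dense open set of parameters need not carry the required rational point, so one works over the function field of the parameter space, spreads out, and invokes the corrected localization of \cite{Bl2} to produce the homotopy over $k$. Granting these transversality and localization inputs, the formal homotopy identity $\partial s + s \partial = \mathrm{id} - \iota\psi$ delivers the quasi-isomorphism.
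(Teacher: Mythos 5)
The paper does not actually prove Theorem \ref{thm:ML2}: it is imported as Bloch's moving lemma, quoted from \cite[Lemma 4.2]{Bl1} with the faulty localization input of \emph{loc.cit.} corrected by the main theorem of \cite{Bl2}, so there is no internal argument to compare yours against. Measured instead against Bloch's published proof, your sketch reproduces its architecture faithfully: the translation action of $\mathbb{G}_a^m$ for $X = \mathbb{A}^m$ (valid in all characteristics precisely because translates are isomorphisms), the join-with-a-linear-centre projection technique to handle projective $X$, localization to descend to quasi-projective $X$, and the function-field/spreading-out device when $k$ is too small to supply rational points in the dense open locus of good parameters. Invoking \cite{Bl2} as a black box for localization is legitimate here, since that is exactly how the paper itself packages the result.

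One caveat deserves emphasis. As literally stated, your identity $\partial s + s\partial = \mathrm{id} - \iota\psi$ cannot hold for a single operator $s$ defined on all of $z^q(X,\bullet)$: the genericity required of the deformation parameter depends on the cycle being moved (on its degree and its position relative to the finitely many loci $W \times F$), and no uniform parameter works for every $Z$ at once; moreover ``extending $s$ linearly'' does not yield a chain homotopy unless the parameters chosen for $Z$ and for the components of $\partial Z$ agree. The standard repair, and what Bloch in effect does, is to construct the homotopy only on finitely generated subcomplexes --- for the finitely many cycles occurring in a given homology class or bounding relation, which suffices since a quasi-isomorphism can be verified one class at a time --- or equivalently to work with the universal translate over $k(\mathbb{G}_a^m)$ and specialize. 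You gesture at both devices (``only finitely many closed conditions on the parameter'' and the function-field passage), so this is a presentational gap rather than a fatal one, but the clean formulation is a compatible system of homotopies on finite subcomplexes, not one global $s$.
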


We use the following refined version that allows more flexibility for $\mathcal{W}$ (see \cite[p.112]{Hanamura}, \cite[Definition 2.1]{KL}, \cite[Definition 5.3]{KP3}). 

\begin{defn}\label{defn:refined moving}Let $X$ be a quasi-projective $k$-scheme, and let $T_1, \cdots, T_N$ be $k$-schemes. Let $\mathcal{W}$ be a finite collection of irreducible locally closed subsets $W_i \subset X \times T_i$. For each face $F \subset \square^n$, let $p_{F, i}: X \times F \times T_i \to X \times T_i$ be the projection. Let $\un{z}_{\mathcal{W}}^q (X, n)$ be the subgroup generated by integral cycles $Z \in \un{z} ^q (X, n)$, such that for each face $F \subset \square^n$, two sets $p_{F,i} ^{-1} (W_i)$ and $ (Z \cap (X \times F)) \times T_i$ intersect properly on $X \times F \times T_i$ for all $1 \leq i \leq N$. Modding out the degenerate cycles, we obtain $z_{\mathcal{W}}^q (X, \bullet)\subset z^q (X, \bullet)$. 
\end{defn}

\begin{lem}\label{lem:refined moving}Let $X$ be a quasi-projective $k$-scheme, and let $\mathcal{W}$ be as in Definition \ref{defn:refined moving}. Then, there exists a finite collection $\mathcal{C}$ of irreducible locally closed subsets of $X$ and a set function $e: \mathcal{C} \to \mathbb{Z}_{\geq 0}$ such that $z_{\mathcal{C}, e} ^q (X, \bullet) = z_{\mathcal{W}} ^q (X, \bullet)$, where the left one is as in Definition \ref{defn:complex for moving}. Furthermore, the inclusion map $z_{\mathcal{W}} ^q (X, \bullet) \hookrightarrow z ^q (X, \bullet)$ is a quasi-isomorphism if $X$ is smooth .
\end{lem}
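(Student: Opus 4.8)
The plan is to convert the \emph{refined} proper-intersection conditions of Definition~\ref{defn:refined moving}, which are imposed on the auxiliary products $X \times F \times T_i$, into ordinary codimension conditions on $X$ of the type appearing in Definition~\ref{defn:complex for moving}, and then to read off the quasi-isomorphism from Bloch's moving lemma (Theorem~\ref{thm:ML2}). The bridge between the two formulations is the dimension of the fibres of the projections $\pi_i \colon W_i \to X$.

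\emph{Construction of $\mathcal{C}$ and $e$.} For each $i$, the image $\pi_i(W_i) \subset X$ is constructible by Chevalley's theorem, and by upper semicontinuity of fibre dimension the function $x \mapsto \dim (W_i)_x$ takes finitely many values with constructible level sets. Decomposing these into irreducible locally closed pieces, I obtain a finite family of irreducible locally closed subsets $S \subset X$ on each of which $(W_i)_x$ has constant dimension $e_i(S)$; I let $\mathcal{C}$ be the union of these families over all $i$. I then set $e(S) := \dim W_i - \dim S - e_i(S)$, taking the minimum of these values if the same $S$ occurs for several $i$. Since the part of $W_i$ lying over $S$ has dimension $\dim S + e_i(S)$ and is contained in $W_i$, one gets $e(S) \ge 0$, as required by Definition~\ref{defn:complex for moving}.

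\emph{The dimension count.} Fix an integral $Z \in \un{z}^q(X, n)$ and a face $F \subset \square^n$, and write $Z_F := Z \cap (X \times F)$, which (where non-empty) is of pure codimension $q$ in $X \times F$ by Bloch-admissibility. Put $A := p_{F,i}^{-1}(W_i)$, which is irreducible of dimension $\dim W_i + \dim F$, and $B := Z_F \times T_i$, of dimension $\dim Z_F + \dim T_i$. Projecting $A \cap B \to X \times F$, the fibre over a point $(x,f) \in Z_F$ is $(W_i)_x$, so the part of $A \cap B$ lying over the stratum $S$ has dimension $\dim\!\big(Z_F \cap (S \times F)\big) + e_i(S)$. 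Because $A$ is irreducible and $B$ is pure-dimensional, $A$ and $B$ meet properly in $X \times F \times T_i$ if and only if, for every such $S$,
\[
\dim\!\big(Z_F \cap (S \times F)\big) + e_i(S) \ \le\ \dim W_i + \dim F - q,
\]
which, after substituting the definition of $e(S)$, is exactly the inequality $\codim_{S \times F}\big(Z \cap (S \times F)\big) \ge q - e(S)$. Running this equivalence over all $i$ and all faces $F$ identifies the two families of generators, giving $z_{\mathcal{W}}^q(X,\bullet) = z_{\mathcal{C},e}^q(X,\bullet)$.

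\emph{Conclusion.} When $X$ is smooth, Theorem~\ref{thm:ML2} applied to the pair $(\mathcal{C}, e)$ shows that $z_{\mathcal{C},e}^q(X,\bullet) \hookrightarrow z^q(X,\bullet)$ is a quasi-isomorphism, whence the same holds for $z_{\mathcal{W}}^q(X,\bullet)$. I expect the main obstacle to be the bookkeeping in the third step: matching the single expected-dimension bound for the product intersection with the stratum-by-stratum codimension bounds, passing through the fibre-dimension stratification to handle the non-flatness of $W_i \to X$, and checking $e \ge 0$ and the repeated-subset convention. Smoothness is used only at the very end, through Bloch's moving lemma.
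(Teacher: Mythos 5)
Your proof is correct and takes essentially the same route as the paper: the paper's proof stratifies $X$ by the fibre dimension of $W_i$ over $X$ (via $C_{i,d} = \{x \in X \mid (x\times T_i)\cap W_i \text{ contains a component of dimension} \ge d\}$ and $C_{i,d}\setminus C_{i,d+1} = \bigcup_j C_{i,d}^j$), sets $e(C_{i,d}^j) = \dim W_i - d - \dim C_{i,d}^j$ --- exactly your stratification and formula --- and then invokes Theorem~\ref{thm:ML2}. Your dimension count, your verification that $e \ge 0$, and your minimum convention for repeated strata merely fill in the details the paper compresses into ``one checks $z^q_{\mathcal{C},e}(X,\bullet) = z^q_{\mathcal{W}}(X,\bullet)$.''
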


\begin{proof}
For the first part, as in \cite[Proposition 2.2]{KL}, we define $C_{i,d}:= \{ x \in X | (x \times T_i) \cap W_i \mbox{ contains a component of dimension} \geq d \}$ for each $1 \leq i \leq N$. We write $C_{i, d} \setminus C_{i, d+1} = \bigcup_{j} C_{i,d} ^j$, a finite union of irreducible locally closed subsets. Let $\mathcal{C}= \{ C_{i, d} ^j \}$ and define $e: \mathcal{C} \to \mathbb{Z}_{\geq 0}$ by $e (C_{i,d} ^j) = \dim W_i - d - \dim C_{i,d} ^j $. One checks $z_{\mathcal{C}, e} ^q (X, \bullet) = z_{\mathcal{W}} ^q (X, \bullet)$. When $X$ is smooth, this complex is quasi-isomorphic to $z^q (X, \bullet)$ by Theorem \ref{thm:ML2}, so the second part holds.
\end{proof}

\begin{lem}\label{lem:distinguished complex}
Let $f: X \to Y$ be a morphism of quasi-projective $k$-schemes. Let $\mathcal{W}$ be a finite collection over $X$ as in Definition \ref{defn:refined moving}. Then, there exists a finite collection $\mathcal{W}'$ over $Y$ as in Definition \ref{defn:refined moving}, such that $f^*: z^q _{\mathcal{W}'} (Y, \bullet) \to z^q _{\mathcal{W}} (X, \bullet)$, given by taking the associated cycle of $f^{-1} (Z)$ for each cycle $Z$, is a well-defined chain map.
\end{lem}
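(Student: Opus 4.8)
The plan is to construct $\sW'$ explicitly from $\sW$ by pushing the parameter data forward along the graph of $f$, while retaining $X$ itself as an extra parameter scheme so that the fibres of $f$ are not forgotten. Write $\gamma_f\colon X \inj Y\times X$, $x\mapsto (f(x),x)$, for the graph of $f$; since $Y$ is separated this is a closed immersion, with image $\Gamma_f$. I would take $\sW'$ to consist of: (i) the parameter scheme $X$ together with the (components of the) locally closed set $V_0:=\Gamma_f\subset Y\times X$; and (ii) for each $W_i\subset X\times T_i$ in $\sW$, the parameter scheme $X\times T_i$ together with the (components of the) locally closed set $V_i:=(\gamma_f\times{\rm Id}_{T_i})(W_i)\subset Y\times X\times T_i$. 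Each $V_i$ is locally closed of the same dimension as $W_i$ because $\gamma_f\times{\rm Id}_{T_i}$ is a closed immersion; decomposing into irreducible components keeps $\sW'$ a finite collection of irreducible locally closed subsets, as Definition~\ref{defn:refined moving} requires.

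The first and most important step is to show that the single datum $V_0$ already forces the pull-back to be well defined. For an integral $Z\subset Y\times\square^n$ and a face $F$, the projection $q_{F,0}\colon Y\times F\times X\to Y\times X$ satisfies $q_{F,0}^{-1}(V_0)=\{(f(x),u,x)\}$, and a direct set-theoretic identification shows that its intersection with $(Z\cap(Y\times F))\times X$ maps isomorphically, under $(x,u)\mapsto(f(x),u,x)$, onto $f^{-1}(Z)\cap(X\times F)$, where $f^{-1}(Z):=(f\times{\rm Id}_{\square^n})^{-1}(Z)$. Tracking codimensions, the $V_0$-condition of Definition~\ref{defn:refined moving} translates precisely into $\codim_{X\times F}\big(f^{-1}(Z)\cap(X\times F)\big)=q$ for every face $F$. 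Taking $F=\square^n$ gives that $f^{-1}(Z)$ is pure of codimension $q$, so its associated cycle lies in $\un{z}^q(X,n)$, and the remaining faces yield the face (proper intersection) condition. Thus membership of $Z$ in the $V_0$-part of $z^q_{\sW'}(Y,\bullet)$ guarantees that $f^*([Z])$ is a well-defined element of $\un{z}^q(X,n)$.

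Granting this codimension preservation, the second step is to transfer the moving conditions for the $W_i$. For each $i$ and each face $F$ I would use the locally closed immersion
\[
\Phi\colon X\times F\times T_i \inj Y\times F\times X\times T_i,\qquad (x,u,t)\mapsto (f(x),u,x,t),
\]
whose image is cut out by $y=f(x)$. A direct check gives $q_{F,i}^{-1}(V_i)\subset\Phi(X\times F\times T_i)$ and identifies, via $\Phi$, the pair $\big(q_{F,i}^{-1}(V_i),\,(Z\cap(Y\times F))\times(X\times T_i)\big)$ with the pair $\big(p_{F,i}^{-1}(W_i),\,(f^{-1}(Z)\cap(X\times F))\times T_i\big)$. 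Since $q_{F,i}^{-1}(V_i)$ already lies in the image of $\Phi$, the two intersection loci correspond exactly under the isomorphism $\Phi$ onto its image, their codimensions in the two ambient spaces differing by the constant $\dim Y$. Combining this with the codimension preservation from Step~1, proper intersection on the $Y$-side (the $V_i$-condition) becomes proper intersection on the $X$-side, i.e. $f^*([Z])\in z^q_{\sW}(X,n)$.

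It then remains to check that $f^*$ is a chain map, which is routine: $f\times{\rm Id}_{\square^n}$ commutes with the cubical face inclusions $\iota_{n,i,\epsilon}$ and with the degeneracy projections, so $f^*$ commutes with $\partial$ and preserves degenerate cycles, hence descends to a chain map $z^q_{\sW'}(Y,\bullet)\to z^q_{\sW}(X,\bullet)$. The main obstacle I anticipate is the bookkeeping of the two steps in the correct logical order: the proper-intersection statement of Step~2 only makes sense once $f^{-1}(Z)$ is known to be a codimension-$q$ cycle, so the role of $V_0$ in Step~1 is essential and must come first. A secondary technical point is that $X$ and $Y$ need not be equidimensional, so all codimension counts should be carried out component by component (equivalently, after restricting to the irreducible components of the relevant schemes), which is exactly why $\sW'$ is permitted to be a finite collection of irreducible pieces.
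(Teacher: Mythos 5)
Your proposal is correct, and it is close in spirit to the paper's proof but not the same; in fact the difference is the whole point. The paper takes $\sW'$ to consist of ${}^t\Gamma_f\subset Y\times X$ together with the \emph{cylinders} $Y\times W_i\subset Y\times(X\times T_i)$, whereas you take ${}^t\Gamma_f$ together with the graph-images $(\gamma_f\times{\rm Id}_{T_i})(W_i)$, which equal $({}^t\Gamma_f\times T_i)\cap(Y\times W_i)$ --- i.e.\ you merge the paper's two kinds of data into one. This matters: for the cylinder $Y\times W_i$, the two sets occurring in Definition~\ref{defn:refined moving}, namely the preimage $Y\times F\times W_i$ and $(Z\cap(Y\times F))\times X\times T_i$, constrain disjoint groups of coordinates, so their intersection is a product whose codimension is exactly the sum of the two codimensions; the cylinder condition is therefore satisfied by \emph{every} admissible $Z$ and carries no information, while the ${}^t\Gamma_f$-condition alone only yields admissibility of $f^*(Z)$, not proper intersection with the $W_i$ (test with $f={\rm Id}_X$, $T_1=\Spec(k)$ and $W_1$ a point: the paper's $z^q_{\sW'}$ is then all of $z^q(X,\bullet)$, yet not every admissible cycle meets $W_1\times F$ properly). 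Your twisted sets $V_i$ repair exactly this: under your immersion $\Phi$ the intersection locus on the $Y$-side is identified with $p_{F,i}^{-1}(W_i)\cap\big((f^{-1}(Z)\cap(X\times F))\times T_i\big)$ with a uniform codimension shift of $\dim Y$, so the $V_i$-condition translates verbatim into the $\sW$-condition for $f^*(Z)$, and your insistence on doing Step~1 first is also the right logical order. So your argument is a correct (and more careful) implementation of what the paper's one-line verification gestures at. One small caveat: the $V_0$-condition gives only $\codim_{X\times F}(f^{-1}(Z)\cap(X\times F))\ge q$, not equality; purity of $f^{-1}(Z)$ in codimension $q$ needs the complementary Krull-type upper bound, which is available when $Y$ is smooth (as in every application of this lemma in the paper) but can fail for singular $Y$, in which case one should read $f^*[Z]$ as the cycle of the codimension-$q$ components --- and note that for each such component $Z'$ the face-sections $Z'\cap(X\times F)$ have codimension exactly $q$ by Krull's height theorem applied on $Z'$ itself, which is what lets you pass from your set-level bound to the component-level condition required for membership in $z^q_{\sW}(X,\bullet)$.
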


\begin{proof}Let $\mathcal{W}$ consist of $W_i \subset X \times T_i$ for $1 \leq i \leq N$. Define $\mathcal{W}'$ to be the collection of the sets $Y \times W_i \subset Y \times T_i'$, with $T_i ' = X \times T_i$ for $1 \leq i \leq N$, and the transpose of the graph cycle, ${}^t\Gamma_f \subset Y \times T'_{N+1}$ with $T_{N+1}' = X$. 
Since $\mathcal{W}'$ contains ${}^t \Gamma_f$, one checks that $f^*$ is well-defined, and one easily sees that $f^* (z_{\mathcal{W}'}^q (Y, \bullet))\subset z_{\mathcal{W}} ^q (X, \bullet)$.
\end{proof}

\subsection{External action of Chow cycles}
Let $X, Y \in \Sch_k$ and let $(Y,D)$ be a scheme with an effective divisor. Set $D_X:= X \times D$.

\begin{lem}\label{lem:Ext-Mod}
Let $Z \in \un{z}_r(X,n_1)$ and $W \in \un{z}_s(Y|D, n_2)$ be cycles. Then $\tau(Z \times W)$ on $ X \times Y \times \square^{n_1+n_2}$ is an element of $\un{z}_{r+s}(X \times Y|D_X, n_1 + n_2)$, where $\tau: X \times \square^{n_1} \times Y \times \square^{n_2} \xrightarrow{\simeq} X \times Y \times \square^{n_1 + n_2}$ is the obvious exchange of factors.
\end{lem}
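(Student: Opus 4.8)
The plan is to reduce to the case of irreducible $Z$ and $W$ by bilinearity of the exterior product, and then to check the face and modulus conditions of Definition~\ref{defn:partial complex} separately. The dimension count is immediate: $\dim(Z\times W)=(r+n_1)+(s+n_2)$, so $\tau(Z\times W)$ has dimension $(r+s)+(n_1+n_2)$, as required for membership in $\un{z}_{r+s}(X\times Y|D_X,n_1+n_2)$. Throughout I identify $X\times\ov{\square}^{n_1}\times Y\times\ov{\square}^{n_2}$ with $X\times Y\times\ov{\square}^{n_1+n_2}$ via (the extension of) $\tau$, and I write $\ov{Z}\subset X\times\ov{\square}^{n_1}$ and $\ov{W}\subset Y\times\ov{\square}^{n_2}$ for the Zariski closures; since the closure of a product is the product of the closures, the closure of $V:=\tau(Z\times W)$ is $\ov{V}=\tau(\ov{Z}\times\ov{W})$.

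For the face condition, every face $F\subset\square^{n_1+n_2}$ splits as $F=F_1\times F_2$ with $F_1\subset\square^{n_1}$ and $F_2\subset\square^{n_2}$ faces, according to whether each coordinate set to $0$ or $\infty$ lies among the first $n_1$ or the last $n_2$ slots. Consequently $V\cap(X\times Y\times F)=\tau\big((Z\cap(X\times F_1))\times(W\cap(Y\times F_2))\big)$. Admissibility of $Z$ and $W$ makes each factor meet its face properly, so the two factors have pure dimensions $r+\dim F_1$ and $s+\dim F_2$; their product then has pure dimension $(r+s)+\dim F$, which is exactly the dimension of a proper intersection. Hence $V$ meets all faces properly.

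The real content is the modulus condition. Consider $\phi:=\tau\circ(\nu_Z\times\nu_W)\colon \ov{Z}^N\times\ov{W}^N\to X\times Y\times\ov{\square}^{n_1+n_2}$, whose image is $\ov{V}$, and let $p,q$ be the projections of $\ov{Z}^N\times\ov{W}^N$ onto $\ov{Z}^N$ and $\ov{W}^N$. Under $\tau$ the divisor $D_X\times\ov{\square}^{n_1+n_2}=(X\times D)\times\ov{\square}^{n_1+n_2}$ is pulled back solely from the $\ov{W}$-factor, while $(X\times Y)\times F^1_{n_1+n_2}$ splits as the $F^1_{n_1}$-divisor pulled from the $\ov{Z}$-factor plus the $F^1_{n_2}$-divisor pulled from the $\ov{W}$-factor. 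Hence
\[
\phi^*\big((X\times D)\times\ov{\square}^{n_1+n_2}\big)=q^*\nu_W^*(D\times\ov{\square}^{n_2}),\qquad
\phi^*\big((X\times Y)\times F^1_{n_1+n_2}\big)=p^*\nu_Z^*(X\times F^1_{n_1})+q^*\nu_W^*(Y\times F^1_{n_2}).
\]
Since $W$ has modulus $D$, we have $\nu_W^*(D\times\ov{\square}^{n_2})\le\nu_W^*(Y\times F^1_{n_2})$ on $\ov{W}^N$; applying the flat pullback $q^*$ and then adding the effective divisor $p^*\nu_Z^*(X\times F^1_{n_1})\ge 0$ yields $\phi^*\big((X\times D)\times\ov{\square}^{n_1+n_2}\big)\le\phi^*\big((X\times Y)\times F^1_{n_1+n_2}\big)$ on $\ov{Z}^N\times\ov{W}^N$. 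The key asymmetry is that $Z$ carries no modulus constraint, yet its face-at-$1$ divisor only enlarges the right-hand side and so may simply be discarded.

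It remains to descend this inequality to $\ov{V}^N$. Over a non-closed field $\ov{Z}^N\times\ov{W}^N$ need be neither normal nor irreducible, so for each irreducible component $V_\alpha$ of $V$ I would choose an irreducible component $C$ of $\ov{Z}^N\times\ov{W}^N$ dominating $\ov{V_\alpha}$ (one exists since $\nu_Z\times\nu_W$ is finite surjective and $\tau$ is an isomorphism), pass to its normalization $C^N$, and use the universal property of normalization to factor $C^N\to\ov{V_\alpha}$ through a dominant, hence surjective, morphism $\rho\colon C^N\to\ov{V_\alpha}^N$ of normal integral schemes. The displayed inequality restricts to $C$ and pulls back to $C^N$, showing that $\rho^*$ of the difference $\nu_{V_\alpha}^*\big((X\times Y)\times F^1_{n_1+n_2}\big)-\nu_{V_\alpha}^*\big((X\times D)\times\ov{\square}^{n_1+n_2}\big)$ is effective, while the generic points of its support lie in $\rho(C^N)=\ov{V_\alpha}^N$. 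Lemma~\ref{lem:cancel} then forces this difference to be effective on $\ov{V_\alpha}^N$, which is the modulus condition for $V_\alpha$. I expect this last descent---reconciling the possibly non-normal, non-integral product of normalizations with the single normalization $\ov{V}^N$, and invoking Lemma~\ref{lem:cancel} to transfer effectivity downward---to be the only genuine obstacle; the rest is the bookkeeping of how $\tau$ distributes the face and modulus divisors between the two factors.
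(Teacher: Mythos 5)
Your proof is correct, but your modulus argument takes a genuinely different route from the paper's, and the paper's is shorter. The paper never forms the product of normalizations: since every irreducible component $V$ of $\tau(Z\times W)$ dominates $W$ (the projection from a product of integral schemes is flat), the restriction to $\ov{V}$ of the projection $q_3\colon X\times Y\times\ov{\square}^{n_1+n_2}\to Y\times\ov{\square}^{n_2}$ maps onto $\ov{W}$, and the universal property of normalization lifts it to $q_1\colon \ov{V}^N\to\ov{W}^N$. Pulling the modulus inequality of $W$ back along $q_1$, and using $q_3^*(D\times\ov{\square}^{n_2})=D_X\times\ov{\square}^{n_1+n_2}$ together with $q_3^*(Y\times F^1_{n_2})\le X\times Y\times F^1_{n_1+n_2}$, plants the desired inequality directly on $\ov{V}^N$: no descent is needed, because the pullback goes in the ``easy'' direction (a difference of Cartier divisors on the normal $\ov{W}^N$ that is effective as a Weil divisor is locally given by regular functions, hence effective Cartier, and effectivity is preserved under pullback along any morphism not landing in its support). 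Your detour through $\ov{Z}^N\times\ov{W}^N$ forces you to confront the possible reducibility and non-normality of that product and then to descend to $\ov{V_\alpha}^N$ via Lemma~\ref{lem:cancel}; this is exactly the shape of the paper's proof of the push-forward statement, Lemma~\ref{lem:pre pro image}, where the descent direction is genuinely unavoidable, but here it can be bypassed. One step of yours is loose as literally written: an inequality of divisor \emph{cycles} on the reducible, possibly non-normal product does not straightforwardly ``restrict'' to a chosen component $C$; the clean fix, entirely within your framework, is to skip the restriction and pull back directly along the composite $C^N\to\ov{W}^N$ (and $C^N\to\ov{Z}^N$), using the effective-Cartier observation above --- after which your application of Lemma~\ref{lem:cancel} to the surjection $\rho\colon C^N\to\ov{V_\alpha}^N$ goes through. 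Both arguments share the key insight, which you state explicitly and the paper uses silently, that $Z$ carries no modulus constraint and its $F^1_{n_1}$-divisor only enlarges the right-hand side; your face-condition argument coincides with the paper's (which simply calls it immediate).
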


\begin{proof}We may assume $Z$ and $W$ are irreducible as the general case immediately follows by extending the result $\mathbb{Z}$-bilinearly. Let $V$ be an irreducible component of $ \tau(Z \times W)$. The face condition for $V$ is immediate. For the modulus condition, let $\ov{V}$ and $\ov{W}$ be the Zariski closures of $V$ and $W$ in $X \times Y \times \ov{\square}^{n_1 + n_2 }$ and $Y \times \ov{\square}^n_2$, respectively. Consider the commutative diagram, 
\begin{equation}\label{eqn:Ext-P-0}
\xymatrix{
\ov{V} ^N \ar[r]^{\nu_V} \ar[d]_{q_1} & \ov{V} 
\ar[r]^{\iota_{V} \ \ \ \ \ \ \ } \ar[d]_{q_2} & X \times Y \times \ov{\square}^{n_1 + n_2} 
\ar[d]^{q_3} \\
\ov{W}^N \ar[r]^{\nu_W} & \ov{W} \ar[r]^ {\iota_W} & Y \times \ov{\square}^{n_2},}
\end{equation}
where $\nu_V$ and $ \nu_W$ are normalizations, $\iota_V$ and $\iota_W$ are closed immersions, $q_3$ is the projection, $q_2$ its restriction, and $q_1$ is induced by the universal property of normalization. Note that $q_3 ^* (D \times \ov{\square}^{n_2}) = D_X \times \ov{\square}^{n_1 + n_2}$ and $q_3 ^* (Y \times F_{n_2} ^1) \leq X \times Y \times F_{n_1 + n_2} ^1$. That $W$ has modulus $D$ means $(\iota_W \circ \nu_W)^*(D \times \ov{\square}^{n_2}) \leq (\iota_W \circ \nu_W)^*(Y \times F_{n_2} ^1)$. Applying $q_1 ^*$ and using the commutativity, we get 
\[
(\iota_V \circ \nu_V)^*(D_X \times \ov{\square}^{n_1 + n_2}) \leq (\iota_V \circ \nu_V)^* \circ q_3 ^* (Y \times F_{n_2} ^1) 
\leq (\iota_V \circ \nu_V)^* (X \times Y \times F_{n_1 + n_2} ^1),
\] 
which shows that $V$ has modulus $D_X$. 
\end{proof}

Thus, we have $\boxtimes_{n_1, n_2}:\un{z}_r(X,n_1)\otimes\un{z}_s(Y| D,n_2)\to \un{z}_{r+s}(X\times Y |D_X, n_1 + n_2),$ given by $\boxtimes_{n_1, n_2} (Z \otimes W) = Z \boxtimes W= \tau_* (Z \times W).$ A straightforward computation of the boundaries of $Z\boxtimes W$ yields:

\begin{prop}\label{prop:Ext-Mod-0}
There is an external product $\boxtimes:\CH_r(X,n_1)\otimes \CH_s(Y| D,n_2)\to \CH_{r+s}(X\times Y | D_X, n_1 + n_2),$ compatible with flat pull-back and proper push-forward.
\end{prop}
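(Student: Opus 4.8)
The plan is to promote the chain-level pairing $\boxtimes_{n_1,n_2}$ constructed just above (which lands in the admissible cycles by Lemma~\ref{lem:Ext-Mod}) to a well-defined pairing on homology. There are three things to check: that $\boxtimes$ is a chain map, that it descends to the nondegenerate quotient complexes, and that it is compatible with the functorial operations. The genuinely new content of the statement is only the first, and even that reduces to a sign computation.

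First I would establish the Leibniz rule for the boundary. Under the identification $\square^{n_1}\times\square^{n_2}\cong\square^{n_1+n_2}$, each codimension-one face of the product is the product of a codimension-one face in one factor with the whole of the other factor; hence the face operators $\partial_j^\epsilon$ on $X\times Y\times\square^{n_1+n_2}$ act as face operators on the $X$-cube for $1\le j\le n_1$ and, after the evident index shift, on the $Y$-cube for $n_1<j\le n_1+n_2$. Substituting this into $\partial=\sum_i(-1)^i(\partial_i^\infty-\partial_i^0)$ and absorbing the reindexing of the $Y$-faces into a global sign, I obtain
\[
\partial(Z\boxtimes W)=(\partial Z)\boxtimes W+(-1)^{n_1}Z\boxtimes(\partial W),
\]
which is exactly the differential of the total complex of the tensor product $z_r(X,\bullet)\otimes z_s(Y|D,\bullet)$. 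This is the ``straightforward computation of the boundaries'' alluded to above, and the only place where care is needed is the sign bookkeeping.

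Second, I would verify the map descends to nondegenerate cycles: if, say, $Z$ is degenerate, i.e.\ pulled back along a coordinate projection $\square^{n_1}\to\square^{n_1-1}$, then $Z\boxtimes W$ is pulled back along the corresponding coordinate projection of $\square^{n_1+n_2}$ and is therefore degenerate; symmetrically in $W$. Thus $\boxtimes$ factors through $z_r(X,\bullet)\otimes z_s(Y|D,\bullet)$ with image in $z_{r+s}(X\times Y|D_X,\bullet)$. Granting the Leibniz rule, the induced pairing on homology is then purely formal: for cycles $z,w$ the product $z\boxtimes w$ is again a cycle, and if $z=\partial a$ then $z\boxtimes w=\partial(a\boxtimes w)$ because $\partial w=0$ (and symmetrically in $w$), so $[z]\boxtimes[w]:=[z\boxtimes w]$ is well defined and $\Z$-bilinear. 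This produces the asserted map $\boxtimes\colon\CH_r(X,n_1)\otimes\CH_s(Y|D,n_2)\to\CH_{r+s}(X\times Y|D_X,n_1+n_2)$.

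Finally, the stated compatibilities I would reduce to the chain level, where they hold essentially on the nose: for a flat $g\colon X'\to X$ one has $(g\times\id)^*(Z\boxtimes W)=(g^*Z)\boxtimes W$ and symmetrically in the $Y$-variable, and for a proper $f$ one has $(f\times\id)_*(Z\boxtimes W)=(f_*Z)\boxtimes W$ by the projection formula; the point is only that these operations preserve the modulus and face conditions, which is precisely the content of Propositions~\ref{prop:FPB} and~\ref{prop:PFF}. Passing to homology then yields the compatibility with flat pull-back and proper push-forward. I do not expect a serious obstacle here: Lemma~\ref{lem:Ext-Mod} has already carried out the one substantive verification---that $\tau(Z\times W)$ inherits the modulus condition $D_X$---so the residual difficulty in the present proposition is confined to the sign accounting in the Leibniz rule.
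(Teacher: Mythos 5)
Your proposal is correct and matches the paper's approach: the paper defines $\boxtimes_{n_1,n_2}$ at the chain level via Lemma~\ref{lem:Ext-Mod} and then asserts exactly the ``straightforward computation of the boundaries'' whose Leibniz rule $\partial(Z\boxtimes W)=(\partial Z)\boxtimes W+(-1)^{n_1}Z\boxtimes(\partial W)$, descent to nondegenerate quotients, and chain-level compatibilities with Propositions~\ref{prop:FPB} and~\ref{prop:PFF} you spell out. Your sign bookkeeping and the degeneracy argument are accurate, so you have simply filled in the details the paper leaves to the reader.
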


\subsection{Cap product}

Our next goal is to construct a cap product $\cap_X: \CH^r(X, n_1) \otimes \CH_s(X| D, n_2)\to \CH_{s-r}(X|D, n_1 + n_2 )$, where $X$ is smooth quasi-projective with an effective Cartier divisor $D$. Consider $\Delta_{X,n}= \Delta_X \times {\rm Id}_{\square^n} : X \times \square^n \to X \times X \times \square^n$, where $\Delta_X$ is the diagonal embedding. As before, let $D_X=X \times D$. 

\begin{defn}\label{defn:good-int}
Let $\un{z}^r(X\times X|D_X, n)_\Delta$ be the group generated by integral cycles $Z \in \un{z}^r(X\times X|D_X, n)$ such that (i) $\codim_{X\times F}(\Delta^{-1}_{X,n}(Z)\cap (X\times F)  )\ge r$ for all faces $F$ of $\square^n$, and (ii) $\Delta^*_{X,n}(Z) \in z^r(X|D, n)$.
\end{defn}

The subgroups $\un{z}^r(X\times X| D_X,n)_\Delta$ form a cubical subgroup $\un{z}^r(X\times X|D_X, -)_\Delta\subset \un{z}^r(X\times X|D_X, -)$ and one checks the maps $\Delta^*_{X,n}$ give a well-defined chain map $\Delta_X^*:z^r(X\times X|D_X, \bullet)_\Delta\to z^r(X|D, \bullet).$

\begin{defn}Suppose $\alpha \in z^r (X, n_1)$ and $\beta \in z_s (X|D,n_2)$ are cycles such that $\alpha \boxtimes \beta$ lies in $z_{\dim X + s -r} (X \times X|D_X, n_1 + n_2)_{\Delta}$. Then, we define the cap product $\alpha \cap_X \beta:= \Delta_{X} ^* (\alpha \boxtimes \beta)$. In  case $D=\emptyset$, we denote it by $\alpha \cup_X \beta$, call it the cup product.
\end{defn}

Lemma \ref{lem:ML-Mod*} below improves \cite[Lemma 4.7]{KL}.

\begin{lem}\label{lem:ML-Mod*}
Fix integers $r, s \ge 0$. Let $\sW$ be a finite set of closed subsets $W_n\subset X\times \square^n$, for $0 \leq n \leq N$, such that each $W_n$ is the support of a cycle in $z_s(X|D, n)$. Let $f:X\to Y$ be a morphism of quasi-projective $k$-schemes. Then, there is a finite set $\sC$ of irreducible locally closed subsets of $Y$ such that for all $\alpha \in z^r _{\mathcal{C}}(Y, *)$ and all $\beta \in \sW$, the cycle $f^* (\alpha)$ lies in $z^r (X,*)$ and the external product $f^*(\alpha) \boxtimes \beta$ lies in $z_{\dim X + s-r}(X \times X |D_X,  *)_{\Delta}$.
\end{lem}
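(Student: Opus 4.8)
The plan is to split the assertion into a proper-intersection (face/codimension) part, which I will force by a refined moving argument, and a modulus part, which I will show is automatic via the containment lemma. The key geometric observation is that, writing $n=n_1+n_2$ with $\alpha\in z^r_{\sC}(Y,n_1)$ and $\beta=W_{n_2}$, the scheme $\Delta_{X,n}^{-1}(f^*(\alpha)\boxtimes\beta)$ inside $X\times\square^{n}$ is precisely the fibre product $f^*(\alpha)\times_X\beta$ over $X$: a point $(x,y_1,y_2)$ lies in it iff $(x,y_1)\in f^*(\alpha)$ and $(x,y_2)\in\beta$. Decomposing a face $F\subset\square^{n}$ as $F_1\times F_2$ with $F_i\subset\square^{n_i}$, condition (i) of Definition \ref{defn:good-int} and the face condition implicit in condition (ii) both become the requirement that $\bigl(f^*(\alpha)\cap(X\times F_1)\bigr)\times_X\bigl(\beta\cap(X\times F_2)\bigr)$ have the expected codimension in $X\times F_1\times F_2$, i.e.\ that $f^*(\alpha)$ meet $\beta$ properly over $X$ on every pair of faces (here one uses that $\beta\cap(X\times F_2)$ already has the expected dimension, as $W_{n_2}$ meets faces properly).

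First I dispose of the modulus half of condition (ii). By Lemma \ref{lem:Ext-Mod}, $f^*(\alpha)\boxtimes\beta$ has modulus $D_X=X\times D$. Any irreducible component $V'$ of the pullback cycle $\Delta_{X,n}^*(f^*(\alpha)\boxtimes\beta)$ is carried by $\Delta_{X,n}$ isomorphically onto an integral closed subscheme of the support of $f^*(\alpha)\boxtimes\beta$; hence by Proposition \ref{prop:CL*} that subscheme has modulus $D_X$. Since $\Delta_{X,n}$ is a closed immersion with $\Delta_{X,n}^*(D_X\times\ov{\square}^{n})=D\times\ov{\square}^{n}$ and $\Delta_{X,n}^*(X\times X\times F_n^1)=X\times F_n^1$, and normalization of the closure commutes with this isomorphism, the modulus inequality transports to show that $V'$ has modulus $D$ on $X\times\square^{n}$. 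Thus, once the intersection is proper, $\Delta_{X,n}^*(f^*(\alpha)\boxtimes\beta)$ automatically satisfies the modulus condition for $D$, and only the proper-intersection conditions remain to be arranged.

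To arrange these, I build a finite parametrized collection over $X$ as in Definition \ref{defn:refined moving}. For every $\beta=W_{n_2}\in\sW$ and every face $F_2\subset\square^{n_2}$, set $\beta_{F_2}:=\beta\cap(X\times F_2)$, regarded as a subset of $X\times\square^{\dim F_2}$; let $\sW_X$ be the collection of these $\beta_{F_2}$ with parameter schemes $T=\square^{\dim F_2}$, which is finite since $\sW$ is finite and there are finitely many faces. A direct check shows that the refined condition defining $z^r_{\sW_X}(X,\bullet)$ is exactly the requirement that $f^*(\alpha)\cap(X\times F_1)$ and $\beta_{F_2}$ meet properly over $X$ for all faces $F_1$, i.e.\ precisely the conditions extracted in the first paragraph. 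Applying Lemma \ref{lem:distinguished complex} to $f\colon X\to Y$ and $\sW_X$ produces a finite collection $\sW_Y$ over $Y$ for which $f^*\colon z^r_{\sW_Y}(Y,\bullet)\to z^r_{\sW_X}(X,\bullet)$ is a well-defined chain map; in particular $f^*(\alpha)\in z^r(X,\bullet)$, giving the first claim. Finally Lemma \ref{lem:refined moving} replaces $\sW_Y$ by a finite collection $\sC$ of irreducible locally closed subsets of $Y$ together with a function $e$ satisfying $z^r_{\sC,e}(Y,\bullet)=z^r_{\sW_Y}(Y,\bullet)$; since $z^r_{\sC}(Y,\bullet)=z^r_{\sC,0}(Y,\bullet)\subseteq z^r_{\sC,e}(Y,\bullet)$, this $\sC$ suffices.

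Putting the pieces together: for $\alpha\in z^r_{\sC}(Y,*)$ we get $f^*(\alpha)\in z^r_{\sW_X}(X,*)\subseteq z^r(X,*)$, whence for each $\beta\in\sW$ the fibre product $f^*(\alpha)\times_X\beta$ meets every product face properly, which is condition (i) and the face part of (ii); the modulus part of (ii) is supplied by the second paragraph, so $f^*(\alpha)\boxtimes\beta\in z_{\dim X+s-r}(X\times X|D_X,*)_\Delta$. I expect the main obstacle to be the bookkeeping of the third paragraph: verifying that the single refined condition over $X$ simultaneously encodes all faces $F=F_1\times F_2$, all $\beta\in\sW$, and all cubical degrees $n_1$ of $\alpha$, while the passage through Lemmas \ref{lem:distinguished complex} and \ref{lem:refined moving} keeps the collection finite and preserves well-definedness of $f^*$. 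The modulus input, by contrast, is the conceptually new ingredient but is clean once the fibre-product picture and Proposition \ref{prop:CL*} are in place.
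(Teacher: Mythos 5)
Your proposal is correct and follows essentially the same route as the paper's proof: your collection $\sW_X$ of face-intersections $\beta\cap(X\times F_2)$ is exactly the paper's $\mathcal{W}_f$, you invoke the same chain Lemma~\ref{lem:distinguished complex} then Lemma~\ref{lem:refined moving} (with $z^r_{\sC}\subseteq z^r_{\sC,e}$) to produce $\sC$, your fibre-product description of $\Delta_{X,n}^{-1}(f^*(\alpha)\boxtimes\beta)$ is the paper's key observation about decomposing faces as $F_1\times F_2$, and your modulus argument via Lemma~\ref{lem:Ext-Mod}, Proposition~\ref{prop:CL*}, and the pullback identities $\Delta_{X,n}^*(D_X\times\ov{\square}^{n})=D\times\ov{\square}^{n}$, $\Delta_{X,n}^*(X\times X\times F_n^1)=X\times F_n^1$ is precisely how the paper verifies Definition~\ref{defn:good-int}(ii). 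No gaps; well done.
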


We remark that unlike \cite[Lemma 4.7]{KL}, with an aid of the containment lemma (Proposition \ref{prop:CL*}), we \emph{no longer} need to assume that $f: X \to Y$ is smooth.

\begin{proof} Let $\mathcal{W}_f$ be the collection of $W_n \cap (X \times F)$, where $W_n \in \mathcal{W}$ and $F \subset \square^n$ is a face. This $\mathcal{W}_f$ is a finite collection over $X$ as in Definition \ref{defn:refined moving}. By Lemma \ref{lem:distinguished complex}, there is a finite collection $\mathcal{W}'$ over $Y$ as in Definition \ref{defn:refined moving} such that $f^*: z_{\mathcal{W}'} ^r (Y, \bullet) \to z_{\mathcal{W}_f} ^r (X,\bullet)$ is well-defined. But, by Lemma \ref{lem:refined moving}, there is a finite collection $\mathcal{C}$ of irreducible locally closed subsets of $Y$ and a set function $e: \mathcal{C} \to \mathbb{Z}_{\geq 0}$ such that $z_{\mathcal{C},e} ^r (Y, \bullet) = z_{\mathcal{W}'} ^r (Y, \bullet)$. Since $z_{\mathcal{C}} ^r (Y, \bullet) \subset z_{\mathcal{C}, e} ^r (Y, \bullet)$, we see $f^*: z_{\mathcal{C}} ^r (Y, \bullet) \to z_{\mathcal{W}_f} ^r (X, \bullet)$ is well-defined. We claim $\mathcal{C}$ satisfies the desired properties.

Observe that for each irreducible cycle $Z' \in z^r _{\mathcal{W}_f} (X, m)$, each $W_n \in \mathcal{W}$, and each face $F  \subset \square^{m+n}$, that $\Delta_{X, m+n} ^{-1} (Z' \boxtimes W_n)$ intersects properly with $X \times F$ is equivalent to that $(Z' \cap (X \times F_1)) \times F_2$ intersects properly with $p_{F_1, F_2} ^{-1} (W_n \cap (X \times F_2))$, where $F= F_1 \times F_2$ for faces $F_1 \subset \square^m$ and $F_2 \subset  \square^n$, and $p_{F_1, F_2} : X \times F_1 \times F_2 \to X \times F_2$ is the projection. Since each $W_n \cap (X \times  F_2)$ is a member of $\mathcal{W}_f$, the cycle $Z'$ does satisfy the above proper intersection condition.

Let $Z \in z_{\mathcal{C}}^r (Y, m)$ be an irreducible cycle. By construction, $f^* (Z) \in z^r _{\mathcal{W}_f}(X, m)$ so that Lemma \ref{lem:Ext-Mod} implies that $f^* (Z) \boxtimes W_n \in {z}_{\dim X + s -r } (X \times X|D_X, *)$. By the above observation, $\Delta_{X, m+n}^{-1} (f^* (Z) \boxtimes W_n)$ intersects properly with $X \times F$ for each face $F \subset \square^{m+n}$, so we have checked Definition \ref{defn:good-int}(i) for $f^* (Z) \boxtimes W_n$. To check Definition \ref{defn:good-int}(ii) for $f^* (Z) \boxtimes W_n$, we only need to show that every irreducible component $V$ of $\Delta_{X, m+n} ^* ( f^* (Z) \boxtimes W_n)$ has modulus $D$ on $X \times \square^{m+n}$. 

Let $\nu_V: \ov{V} ^N \to \ov{V} \hookrightarrow X \times \ov{\square} ^{m+n}$ be the normalization of the closure $\ov{V}$ of $V$ in $X \times \ov{\square}^{m+n}$, composed with the closed immersion. Since $f^* (Z) \boxtimes W_n \in z_{\dim X + s -r} (X \times X|D_X,*)$, it has modulus $D_X$ on $X \times X \times \square^{m+n}$. Via the closed immersion $\Delta_{X, m+n} : X \times \square ^{m+n} \inj X \times X \times \square^{m+n}$, $V$ can be seen as a closed subvariety of $|f^*(Z) \boxtimes W_n|$, and hence Proposition \ref{prop:CL*} implies that $V$ has modulus $D_X$ on $X \times X \times \square^{m+n}$. That is, we have $\nu_V ^* \Delta_{X, m+n}  ^* (D_X \times \ov{\square} ^{m+n}) \leq \nu_V ^* \Delta_{X, m+n} ^* (X \times X \times F_{m+n} ^1)$ on $\ov{V}^N$ . Since $\Delta_{X, m+n} ^* (D_X \times \ov{\square}^{m+n}) = D \times \ov{\square} ^{m+n}$ and $\Delta_{X, m+n} ^* (X \times X \times F_{m+n} ^1) = X \times F_{m+n} ^1$, this is equivalent to $\nu_V ^* (D \times \ov{\square}^{m+n}) \leq \nu_V ^* (X \times F_{m+n} ^1)$, which means $V$ has modulus $D$. Hence, $f^* (Z) \boxtimes W_n$ satisfies Definition \ref{defn:good-int}(ii), finishing the proof.
\end{proof}

\begin{lem}\label{lem:Associative}
Let $s$,  $\sW$ and $f:X\to Y$ and $\mathcal{C}$ be as in Lemma~\ref{lem:ML-Mod*}.
Let $\mathcal{C}'$ be a finite collection of locally closed subsets of $Y$ containing $\sC$. Let $\sT$ be a finite collection of closed subsets of $Y\times \square^n$ of the form $\Supp(T_n)$, for some $T_n \in {z}^r _{\mathcal{C}'} (Y,n)$. Let $g:Y\to Y'$ be a morphism of quasi-projective $k$-schemes. Then, there is a finite set $\mathcal{C}''$ of locally closed subsets of $Y'$ such that for each $W\in  \sW$, $Z\in \sT$ and $V\in {z}^q _{\mathcal{C}''} (Y',*)$, we have
\begin{enumerate}
\item the cycles $g^*(V)$, $(g \circ f)^*(V)$, $g^*(V)\cup_YZ$, $(g \circ f)^*(V)\cap_X(f^*(Z)\cap_XW)$ and $f^*(g^*(V)\cup_YZ)\cap_XW$ are all defined,
\item $(g\circ f)^*(V)\cap_X(f^*(Z)\cap_XW) = f^*(g^*(V)\cup_YZ)\cap_XW$ in $z_*(X|D, *)$.
\end{enumerate}
\end{lem}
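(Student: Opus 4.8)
The plan is to separate the statement into two parts: a bookkeeping problem — producing one collection $\mathcal{C}''$ on $Y'$ making all five cycles in (1) simultaneously defined, solved by iterating the moving lemmas — and a formal cycle-level computation giving the identity (2) once everything is in good position. Throughout I use that the naive pullback is functorial in good position, so $(g\circ f)^*(V)=f^*(g^*(V))$, which reduces definedness to arranging proper intersections. The inner cap of the left-hand side is already handled by the given data: since $\mathcal{C}'\supseteq\mathcal{C}$ we have $z^r_{\mathcal{C}'}(Y,*)\subseteq z^r_{\mathcal{C}}(Y,*)$, so every $Z\in\sT$ is $\mathcal{C}$-admissible and Lemma~\ref{lem:ML-Mod*} makes $f^*(Z)\cap_X W$ defined and lying in $z_{s-r}(X|D,*)$. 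Setting $\sW_1:=\{\Supp(f^*(Z)\cap_X W):Z\in\sT,\ W\in\sW\}$ and applying Lemma~\ref{lem:ML-Mod*} to $(s-r,\sW_1,\,g\circ f)$ yields a collection $\mathcal{C}_1''$ on $Y'$ making the full left-hand side defined for $V\in z^q_{\mathcal{C}_1''}(Y',*)$.

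For the right-hand side I would apply Lemma~\ref{lem:ML-Mod*} once more, now in codimension $q+r$ with the same data $(s,\sW,f)$, to obtain a collection $\mathcal{C}^\dagger$ on $Y$ for which $f^*(\gamma)\cap_X W$ is defined whenever $\gamma\in z^{q+r}_{\mathcal{C}^\dagger}(Y,*)$. It then remains to force $g^*(V)\cup_Y Z\in z^{q+r}_{\mathcal{C}^\dagger}(Y,*)$. I would do this by feeding the fixed cycle $Z$ together with $\mathcal{C}^\dagger$ into the refined moving machinery (Lemmas~\ref{lem:distinguished complex} and~\ref{lem:refined moving}) for $g:Y\to Y'$, producing $\mathcal{C}_2''$ on $Y'$ so that $g^*(V)$ meets $Z$ in a position making the cup product defined and $\mathcal{C}^\dagger$-admissible. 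Here the hypotheses that $Z$ is $\mathcal{C}'$-admissible with $\mathcal{C}'\supseteq\mathcal{C}$ are what keep $Z$ in sufficiently general position for such an enlargement to exist. Taking $\mathcal{C}'':=\mathcal{C}_1''\cup\mathcal{C}_2''$, refined if necessary, makes all cycles in (1) defined at once, proving (1).

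With everything defined, writing $A:=g^*(V)$ and using $(g\circ f)^*(V)=f^*(A)$, the identity (2) reduces to $f^*(A)\cap_X(f^*(Z)\cap_X W)=f^*(A\cup_Y Z)\cap_X W$. This in turn follows from two cycle-level facts valid in good position: compatibility of pullback with the cup product, $f^*(A\cup_Y Z)=f^*(A)\cup_X f^*(Z)$, which comes from the base-change identity $f^*\Delta_Y^*=\Delta_X^*(f\times f)^*$ and $(f\times f)^*(A\boxtimes Z)=f^*(A)\boxtimes f^*(Z)$ (compare Proposition~\ref{prop:Ext-Mod-0}); and associativity of the cap product, $f^*(A)\cap_X(f^*(Z)\cap_X W)=(f^*(A)\cup_X f^*(Z))\cap_X W$, where both sides equal $\delta^*(f^*(A)\boxtimes f^*(Z)\boxtimes W)$ for the triple diagonal $\delta:X\to X\times X\times X$, by coassociativity of the diagonal and compatibility of $\Delta^*$ with $\boxtimes$. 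That each intermediate cap retains modulus $D$ follows, exactly as in the proof of Lemma~\ref{lem:ML-Mod*}, from the containment lemma (Proposition~\ref{prop:CL*}).

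I expect the main obstacle to be (1) rather than (2): producing a single $\mathcal{C}''$ on $Y'$ that controls all five cycles at once, and in particular guaranteeing that $g^*(V)\cup_Y Z$ re-enters the precise complex $z^{q+r}_{\mathcal{C}^\dagger}(Y,*)$ on which the last pullback-and-cap is defined. Threading the output of each moving step into admissible input for the next — the very reason the statement carries the auxiliary data $\mathcal{C}'$ and $\sT$ — is the delicate point; once it is arranged, (2) is a formal consequence of the functoriality and coassociativity already established.
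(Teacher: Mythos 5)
Your handling of the left-hand side of (2) coincides with the paper's (apply Lemma~\ref{lem:ML-Mod*} to $\Supp(f^*(Z)\cap_X W)$ and the morphism $g\circ f$), and your formal derivation of the identity (2) from compatibility of pullback with $\boxtimes$, coassociativity of the diagonal, and the containment lemma (Proposition~\ref{prop:CL*}) is fine -- indeed the paper dispatches (2) even more tersely. The genuine gap is in your construction for the right-hand side. You route it through a fixed intermediate complex: first extract $\mathcal{C}^\dagger$ on $Y$ so that $f^*(\gamma)\cap_X W$ is defined for all $\gamma\in z^{q+r}_{\mathcal{C}^\dagger}(Y,*)$, and then claim that Lemmas~\ref{lem:distinguished complex} and~\ref{lem:refined moving} produce $\mathcal{C}_2''$ forcing $g^*(V)\cup_Y Z\in z^{q+r}_{\mathcal{C}^\dagger}(Y,*)$. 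Neither lemma delivers such a statement, and the underlying dimension count fails in general: to make $\Supp(g^*(V)\cup_Y Z)$ meet $C\times F$ (for $C\in\mathcal{C}^\dagger$ and a face $F$) in codimension $\ge q+r$, moving only $V$ buys you codimension $q$ transversally to loci of the form $\Supp(Z)\cap(C\times F)$, so you still need $Z$ itself to meet $C\times F$ properly to supply the remaining $r$. But $Z$ is a \emph{fixed} cycle, known only to be in good position relative to $\mathcal{C}'$, whereas $\mathcal{C}^\dagger$ is produced by an independent run of the moving lemma and bears no relation to $\mathcal{C}'$; your appeal to ``$Z$ is $\mathcal{C}'$-admissible with $\mathcal{C}'\supseteq\mathcal{C}$'' therefore does not justify the step. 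Since the entire point of the lemma is that $Z$ and $W$ cannot be moved, this is exactly the obstacle you flag in your last paragraph, and it is left unresolved.

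The paper avoids the detour altogether: it never asks $g^*(V)\cup_Y Z$ to land in a prescribed moving complex on $Y$. Instead it applies Lemma~\ref{lem:ML-Mod*} two more times, with constraint loci the intersections $\Supp(Z)\cap(Y\times F)$ and $\Supp(f^*(Z))\cap(X\times F)$ and the morphisms $g$ and $g\circ f$, to get a single collection $\mathcal{C}''=\mathcal{C}''(g)\cup\mathcal{C}''(g\circ f)$ on $Y'$ for which $g^*(V)$, $(g\circ f)^*(V)$, $g^*(V)\cup_Y Z$, $(g\circ f)^*(V)\cup_X f^*(Z)$ and $(g\circ f)^*(V)\cap_X(f^*(Z)\cap_X W)$ are all defined. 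The definedness of $f^*(g^*(V)\cup_Y Z)$ and of its cap with $W$ is then \emph{inherited} via support containments: $\Supp(g^*(V)\cup_Y Z)\subseteq\Supp(g^*(V))\cap\Supp(Z)$, so its $f$-preimage lies inside $\Supp((g\circ f)^*(V))\cap\Supp(f^*(Z))$, whose relevant proper-intersection conditions (and, via Proposition~\ref{prop:CL*}, the modulus condition) are already controlled -- closed subsets of a locus of codimension $\ge c$ again have codimension $\ge c$. To repair your proof, replace the $\mathcal{C}^\dagger$ construction by this support-containment transfer; your argument for (2) can then stand as written.
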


\begin{proof}
It suffices to prove the lemma when $\sW = \{W\}$ and $\sT = \{Z\}$ are singleton sets, where $W \in z_{s}(X|D, *)$, and the cycle $Z \in z^r_{\mathcal{C}'} (Y, *)$ is integral. Given such $Z \in \mathcal{T}$, the cycle $f^* (Z) \cap_X W$ is in $z_{s-r} (X|D, *)$ by Lemma \ref{lem:ML-Mod*}. Let $\mathcal{U}$ be the collection of intersections of $\Supp(f^* (Z) \cap _X W)$ with all faces $X \times F$. We may apply Lemma \ref{lem:ML-Mod*} to $\mathcal{U}$ and the morphism $g \circ f: X \to Y'$ to yield a finite collection $\mathcal{C}'' (g\circ f)$ of locally closed subsets of $Y$, for which both the pull-back $(g \circ f)^* (V)$ and the cap product $(g \circ f)^* (V) \cap _X (f^* (Z) \cap _X W)$ are well-defined.

Similarly, applying Lemma \ref{lem:ML-Mod*} to the finite collections $\mathcal{U}'$ of all intersections of $\Supp (Z)$ with the faces $Y \times F$, and $\mathcal{U}''$ of all intersections of $\Supp (f^* (Z))$ with the faces $X \times F$, we obtain the finite collection $\mathcal{C}''(g)$ of locally closed subsets of $Y'$, for which the pull-back $g^* (V)$ and the cup products $g^* (V) \cup_Y Z$ and $(g \circ f)^* (V) \cup _X f^* (Z)$ are well-defined. Note that $f^* (Z)$ is already well-defined by our given choice of $\mathcal{C}$. Let $\mathcal{C}'' := \mathcal{C}'' (g) \cup \mathcal{C}'' (g \circ f)$. Now, by construction, $g^* (V) \cup _Y Z$ is a higher Chow cycle for which $f^* (g^* (V) \cup_Y Z)$ and $f^* (g^* (V) \cup_Y Z) \cap_X W$ are well-defined. The equality of (2) is obvious by observing that both are effective cycles.
\end{proof}

\begin{thm}\label{thm:product}
Let $X$ be a smooth quasi-projective $k$-scheme with an effective Cartier divisor $D$. Then, there is an associative product  
\begin{equation}\label{eqn:Prod-0}
\cap_X: \CH^r(X, n_1) \otimes \CH_s(X|D, n_2)\to \CH_{s-r}(X|D, n_1 + n_2),
\end{equation}
natural with respect to flat pull-back, and satisfying the projection formula 
$f_*( f^*(a)\cap_Xb)=a\cap_Y f_*(b)$ for a proper morphism of smooth quasi-projective $k$-schemes with effective divisors $f: (X, D) \to (Y, E)$. If $f$ is a flat and proper morphism with $D = f^*(E)$, we have in addition the projection formula $f_*(a\cap_X f^*(b))=f_*(a)\cap_Y b.$
\end{thm}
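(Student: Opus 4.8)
The plan is to build $\cap_X$ at the level of cycles by combining the external product $\boxtimes$ of Proposition~\ref{prop:Ext-Mod-0} with the diagonal restriction $\Delta_X^*$, using Lemma~\ref{lem:ML-Mod*} to force the relevant cycles into the good-position subcomplex $z^r(X\times X|D_X,\bullet)_\Delta$ of Definition~\ref{defn:good-int}, and then descend to homology. Concretely, given $a\in\CH^r(X,n_1)$ and $b\in\CH_s(X|D,n_2)$, I would first fix a cocycle $\beta\in z_s(X|D,n_2)$ representing $b$ and let $\sW=\{|\beta|\}$ consist of its support. Applying Lemma~\ref{lem:ML-Mod*} with $f=\mathrm{id}_X$ produces a finite family $\sC$ of locally closed subsets of $X$ so that for every $\alpha\in z^r_{\sC}(X,\bullet)$ the product $\alpha\boxtimes\beta$ lies in $z_{\dim X+s-r}(X\times X|D_X,\bullet)_\Delta$, whence $\alpha\cap_X\beta:=\Delta_X^*(\alpha\boxtimes\beta)$ is defined. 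Because $\boxtimes$ is a chain map (Proposition~\ref{prop:Ext-Mod-0}) and $\Delta_X^*$ is a chain map on the good-position subcomplex, $\alpha\mapsto\alpha\cap_X\beta$ is a morphism of complexes $z^r_{\sC}(X,\bullet)\to z_{s-r}(X|D,\bullet)$. Since $X$ is smooth, $z^r_{\sC}(X,\bullet)\hookrightarrow z^r(X,\bullet)$ is a quasi-isomorphism by Theorem~\ref{thm:ML2} (with $e=0$; see also Lemma~\ref{lem:refined moving}), so $a$ has a cocycle representative $\alpha\in z^r_{\sC}(X,n_1)$, and I would set $a\cap_X b:=[\alpha\cap_X\beta]$.

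The first real point to settle is well-definedness. Independence of the choice of $\alpha$ inside a fixed $\sC$ is immediate from $(-)\cap_X\beta$ being a chain map and $\beta$ being a cycle: if $\alpha-\alpha'=\partial\gamma$ with $\gamma\in z^r_{\sC}(X,\bullet)$, then $\alpha\cap_X\beta-\alpha'\cap_X\beta=\partial(\gamma\cap_X\beta)$. Independence of $\beta$ is where the dependence of $\sC$ on the representative must be handled: given $\beta-\beta'=\partial\eta$, I would take $\sW=\{|\beta|,|\beta'|,|\eta|\}$, run Lemma~\ref{lem:ML-Mod*} once to obtain a common $\sC$, and pick a single $\alpha\in z^r_{\sC}(X,n_1)$ for $a$; then $\alpha\cap_X\beta-\alpha\cap_X\beta'=\alpha\cap_X\partial\eta=\pm\,\partial(\alpha\cap_X\eta)$ since $\partial\alpha=0$, so the classes agree. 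Enlarging $\sC$ does not change the class, since $z^r_{\sC'}(X,\bullet)\subset z^r_{\sC}(X,\bullet)$ whenever $\sC'\supset\sC$ and both compute $\CH^r(X,\ast)$. I expect this bookkeeping---arranging that all cycles entering a given identity lie in one moving complex---to be the main obstacle, although the genuinely hard geometric content (that $\Delta_X^*$ preserves the modulus condition, via Proposition~\ref{prop:CL*}) is already absorbed into Lemma~\ref{lem:ML-Mod*}.

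Associativity, i.e.\ the module axiom $(a\cup_X a')\cap_X b=a\cap_X(a'\cap_X b)$ with the Chow-ring multiplication $\cup_X$ the $D=\emptyset$ instance of $\cap_X$, follows by specializing Lemma~\ref{lem:Associative} to $f=g=\mathrm{id}_X$, which gives the cycle-level equality $(V\cup_X Z)\cap_X W=V\cap_X(Z\cap_X W)$; that lemma simultaneously guarantees a single collection $\sC''$ making all four cap/cup products admissible, so the identity passes to homology and exhibits $\CH_\ast(X|D,\ast)$ as a graded module over the Chow ring $\CH^\ast(X,\ast)$.

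Finally, for naturality under a flat morphism $f$ I would choose the moving complexes on source and target compatibly via Lemma~\ref{lem:distinguished complex}, and then deduce $f^*(a\cap_X b)=f^*(a)\cap f^*(b)$ from the compatibility of $\boxtimes$ with flat pull-back (Proposition~\ref{prop:Ext-Mod-0}), of $z_s(-|D,\bullet)$ with flat pull-back (Proposition~\ref{prop:FPB}), and the base-change identity of Proposition~\ref{prop:pp}, together with the fact that $\Delta$ commutes with $f$. The two projection formulas come from the commutative square $(f\times f)\circ\Delta_X=\Delta_Y\circ f$ combined with the compatibility of $\boxtimes$ with proper push-forward (Proposition~\ref{prop:Ext-Mod-0}) and Propositions~\ref{prop:PFF} and~\ref{prop:pp}; as before, the only care needed is to invoke Lemma~\ref{lem:ML-Mod*} so that $f^*(a)\cap_X b$ and $a\cap_Y f_*(b)$ (respectively $a\cap_X f^*(b)$ and $f_*(a)\cap_Y b$) are computed by representatives lying in matching moving complexes.
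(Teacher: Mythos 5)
Your proposal is correct and takes essentially the same route as the paper's proof: fix a cycle representative of $b$, use Lemma~\ref{lem:ML-Mod*} (with $f=\mathrm{id}_X$) and Theorem~\ref{thm:ML2} to move a representative of $a$ into $z^r_{\sC}(X,\bullet)$, define $a\cap_X b$ as the class of $\Delta_X^*(\alpha\boxtimes\beta)$ with the modulus condition absorbed via Proposition~\ref{prop:CL*}, derive associativity from Lemma~\ref{lem:Associative}, and obtain the projection formulas by matching moving complexes through Lemma~\ref{lem:ML-Mod*} together with Propositions~\ref{prop:PFF}, \ref{prop:FPB} and \ref{prop:pp}. Your explicit well-definedness check (running Lemma~\ref{lem:ML-Mod*} once on $\{|\beta|,|\beta'|,|\eta|\}$ to get a single common $\sC$, and noting stability under enlarging $\sC$) merely fills in bookkeeping that the paper leaves implicit.
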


\begin{proof}
%
%
To see the existence of the product $\cap_X$, let $\alpha \in \CH^r (X, n_1)$, $\beta \in \CH_s (X|D, n_2)$ be the given cycle classes. Choose a cycle representative $\sum_{j} m_j \beta_j \in z_s (X|D, n_2)$ for $\beta$, with $m_j \in \mathbb{Z}$ and $\beta_j$ irreducible. 


By Lemma \ref{lem:ML-Mod*}, for each $\beta_j$ there exists a finite set $\mathcal{C}_j$ of irreducible locally closed subsets of $X$ (in the notation of Lemma \ref{lem:ML-Mod*}, we take $f= {\rm Id}_X$) such that for each $\alpha' \in z_{\mathcal{C}_j} ^r (X, n_1)$, the external product $\alpha' \boxtimes \beta_j$ lies in $z_{\dim X + s-r} (X \times X | X \times D, n_1 + n_2)_{\Delta}$. Let $\mathcal{C} = \cup_j \mathcal{C}_j$. Here, we have $z_{\mathcal{C}} ^r (X, n_1) \subset z_{\mathcal{C}_j} ^r (X, n_1)$ for each $j$. By Theorem \ref{thm:ML2}, there always exists a cycle representative $ \alpha' \in z_{\mathcal{C}} ^r (X, n_1)$ of $\alpha$. For each such $\alpha ' \boxtimes \beta_j$, there exists a pull-back $\Delta^* (\alpha' \boxtimes \beta_j) \in z_{s-r} (X, n_1 + n_2)$, regarded as a higher Chow cycle. The modulus condition holds by the containment lemma, Proposition \ref{prop:CL*}. Now, $\Delta^* (\alpha' \boxtimes \beta)$ is given by $\sum_j m_j \Delta^* (\alpha' \boxtimes \beta_j) $ and it defines $\alpha \cap_X \beta \in \CH_{s-r} (X|D, n_1 + n_2)$. Associativity follows directly from Lemma \ref{lem:Associative}.



If $f: (X, D) \to (Y,E)$ is a proper map of smooth quasi-projective schemes with effective divisors, the push-forward of a cycle with modulus is defined by Proposition \ref{prop:PFF}. So, to prove the projection formula, given an effective cycle $W \in z_s(X|D, \bullet)$, it is enough to find, using Theorem~\ref{thm:ML2}, a quasi-isomorphic subcomplex $z^r_{\mathcal{C}'}(Y, \bullet)\subset z^r(Y,\bullet)$ such that $V \cap_Y f_*(W)$ and $f^*(V) \cap_X W$ are both defined for all $V \in z^r_{\mathcal{C}'}(Y,*)$. But this follows from Lemma~\ref{lem:ML-Mod*}. If $f$ is flat and proper with $f^*(E) = D$, the flat pull-back of a cycle with modulus is defined by Proposition~\ref{prop:FPB}. The projection formula follows similarly from \lemref{lem:ML-Mod*}.
\end{proof}

\section{Applications of \thmref{thm:product}}
\label{sect:PBF-PB}
In this section, we apply \thmref{thm:product} to show that higher Chow groups with modulus satisfy the projective bundle formula and the blow-up formula. As another application, we show that higher Chow groups with modulus admit pull-back maps for certain classes of smooth schemes with effective divisors. Let $\sD(\Ab)$ denote the (unbounded) derived category of abelian groups.

\subsection{Pull-backs}\label{sec:mod pull-back}
Let $(S,D)$ be a smooth quasi-projective $k$-scheme with an effective Cartier divisor. For a morphism $p:Z \to S$ in $\Sm_S$, we denote the effective divisor $p^*(D)$ by $D$ in what follows. Let $X, Y \in \Sm_S$, with $p_X: X \to S$ and $p_Y: Y \to S$ the structure morphisms. Let $p_1: X \times_S Y \to X$ and $p_2: X \times_S Y \to Y$ be the natural projections.

\begin{defn}[{cf. \cite[Definition 5.1]{KL}}] 
Suppose $p_Y$ is projective. Given $\alpha\in \CH_{r+\dim_S (X)}(X\times_S Y)$, define $\alpha_*:z_s(Y|D, \bullet)\to z_{s+r}(X|D, \bullet)$ to be the composition $z_s(Y|D, \bullet)\overset{p_2^*}{\to} z_{s+\dim_S (X)}(X\times_SY|D, \bullet) \overset{\alpha \cap - }{\to} z_{s+r}(X\times_SY| D, \bullet)\overset{p_{1*}}{\to} z_{s+r}(X|D, \bullet)$ in $\mathcal{D}(\Ab)$. Here, $p_{1*}$ is defined by Proposition \ref{prop:PFF} since $p_Y$ (and hence $p_1$) is projective. 
\end{defn}

\begin{prop}[{cf. \cite[Proposition 5.2]{KL}}]\label{prop:CorrFunct} 
For $X,Y,Z$ smooth and quasi-projective over $S$, with $Y$ and $Z$ projective over $S$, and for $\alpha\in\CH_{r + \dim_S(X)}(X \times_S Y)$, $\beta\in \CH_{r' + \dim_S(Y)}(Y \times_S Z)$, we have $(\beta\circ \alpha)_*=\beta_*\circ\alpha_*$ as maps in $\mathcal{D}(\Ab)$ from $z_s(Z|D,\bullet)$ to $z_{s+r+r'}(X|D,\bullet)$.
\end{prop}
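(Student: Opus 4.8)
The plan is to carry out the standard correspondence calculus of \cite[Proposition~5.2]{KL} at the level of cycle complexes, now also keeping track of the modulus condition. Recall that $\beta\circ\alpha$ is the class on $X\times_S Z$ built from the triple fiber product $P:=X\times_S Y\times_S Z$: writing $\pi_{XY}\colon P\to X\times_S Y$, $\pi_{YZ}\colon P\to Y\times_S Z$ and $\pi_{XZ}\colon P\to X\times_S Z$ for the partial projections, one sets $\beta\circ\alpha=\pi_{XZ,*}\bigl(\pi_{XY}^*(\alpha)\cup_P\pi_{YZ}^*(\beta)\bigr)$. Here $\pi_{XY}^*,\pi_{YZ}^*$ are flat pull-backs (Proposition~\ref{prop:FPB}), $\cup_P$ is the cup product on the smooth quasi-projective $k$-scheme $P$ (Theorem~\ref{thm:product} with $D=\emptyset$), and $\pi_{XZ,*}$ is proper push-forward (Proposition~\ref{prop:PFF}), which is legitimate because $\pi_{XZ}$ is the base change of $Y\to S$ and hence flat and proper. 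A direct index count shows that both $(\beta\circ\alpha)_*$ and the composite $\beta_*\circ\alpha_*$ send $z_s(Z|D,\bullet)$ to $z_{s+r+r'}(X|D,\bullet)$, so the two maps are at least comparable.

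First I would fix an effective representative $\gamma\in z_s(Z|D,\bullet)$ and reduce both sides to one expression on $P$. Let $\pi_X\colon P\to X$ and $\pi_Z\colon P\to Z$ be the projections; the candidate common value is
\[
\Theta(\gamma):=\pi_{X,*}\bigl((\pi_{XY}^*\alpha\cup_P\pi_{YZ}^*\beta)\cap_P\pi_Z^*\gamma\bigr),
\]
in which the cup product of the two correspondence classes carries no modulus while $\pi_Z^*\gamma$ carries the modulus pulled back from $D$ on $S$. Comparing $\Theta(\gamma)$ with $(\beta\circ\alpha)_*(\gamma)$ uses only $\pi_Z=q_2\circ\pi_{XZ}$ and $\pi_X=q_1\circ\pi_{XZ}$ (with $q_1,q_2$ the projections of $X\times_S Z$) together with the projection formula $\pi_{XZ,*}(a\cap_P\pi_{XZ}^*b)=\pi_{XZ,*}(a)\cap b$ of Theorem~\ref{thm:product}, valid since $\pi_{XZ}$ is flat and proper with $D=\pi_{XZ}^*D$. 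Comparing $\Theta(\gamma)$ with $\beta_*\circ\alpha_*(\gamma)$---that is, first applying $\beta_*\colon z_s(Z|D,\bullet)\to z_{s+r'}(Y|D,\bullet)$ and then $\alpha_*\colon z_{s+r'}(Y|D,\bullet)\to z_{s+r+r'}(X|D,\bullet)$---requires contracting the $Z$-factor before the $Y$-factor, which I would arrange by commuting the intermediate flat pull-back $p_2^*$ past the proper push-forward along $Y\times_S Z\to Y$ via the base-change identity $f^*g_*=g'_*f'^*$ of Proposition~\ref{prop:pp}, and then reassociating the iterated cap by Lemma~\ref{lem:Associative}. At every stage the modulus condition is preserved: flat pull-back respects it by Proposition~\ref{prop:FPB}, proper push-forward by Proposition~\ref{prop:PFF}, and restriction to the diagonals by the containment lemma, Proposition~\ref{prop:CL*}, exactly as in Lemma~\ref{lem:ML-Mod*}, since every divisor occurring is pulled back from $D$ on $S$.

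The main obstacle is that the equality is asserted in $\mathcal{D}(\Ab)$, so each pull-back, cap and push-forward above must be a genuine chain map and all of them must be simultaneously defined on a single quasi-isomorphic subcomplex. This is the bookkeeping already handled in Lemmas~\ref{lem:ML-Mod*} and~\ref{lem:Associative}: applying Bloch's moving lemma (Theorem~\ref{thm:ML2}) and its refinements (Lemmas~\ref{lem:refined moving} and~\ref{lem:distinguished complex}) repeatedly, I would produce finite collections of irreducible locally closed subsets of $X\times_S Z$ and of the intermediate factors whose associated subcomplexes of the form $z^r_{\mathcal{C}}(-,\bullet)$ are quasi-isomorphic to the full complexes and on which representatives $\alpha',\beta'$ make $\cup_P$, both orders of partial push-forward, and the final diagonal pull-back all well-defined at once. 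On such subcomplexes the two unwound expressions become literally equal effective cycles, not merely homologous ones, and the asserted identity of maps in $\mathcal{D}(\Ab)$ then follows by descent along the quasi-isomorphisms. The only content beyond the modulus-free case of \cite{KL} is the survival of the modulus, and that is supplied uniformly by Proposition~\ref{prop:CL*}.
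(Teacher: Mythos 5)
Your proposal is correct and takes essentially the same route as the paper: the paper's own proof is a two-line citation of exactly the ingredients you assemble---Theorem~\ref{thm:product} (cup/cap products and the projection formulas), functoriality of flat pull-back and proper push-forward, associativity via Lemma~\ref{lem:Associative}, the base-change compatibility of Proposition~\ref{prop:pp}, and the moving-lemma bookkeeping of Lemma~\ref{lem:ML-Mod*}---deferring the standard correspondence calculus on the triple product $X\times_S Y\times_S Z$ to \cite[Proposition~5.2]{KL}. Your write-up simply makes explicit (correctly, including the order of the composite $z_s(Z|D,\bullet)\to z_{s+r'}(Y|D,\bullet)\to z_{s+r+r'}(X|D,\bullet)$ and the flatness and properness of $\pi_{XZ}$ as a base change of $Y\to S$) the unwinding that the paper leaves implicit.
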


\begin{proof} 
The proof is standard combining Theorem~\ref{thm:product}, the functoriality of flat pull-back and projective push-forward, associativity, compatibility of projective push-forward and flat pull-back in transverse Cartesian squares, and the projection formula for a smooth projective morphism, as in \S \ref{sect:PPFP} (see \cite[Proposition~5.2]{KL}).
\end{proof}

\begin{thm}\label{thm:Pull-back-map}
Let $f:X \to Y$ be a morphism of smooth and quasi-projective schemes over $S$, with $Y$ projective over $S$. Then, there is a well-defined pull-back map $f^*:\CH^s(Y|D,\bullet)\to \CH^s(X|D,\bullet)$ with $(gf)^*=f^*g^*$ when $g: Y \to Z$ is another morphism with $Z$ smooth projective over $S$. If $f$ is flat, then $f^*$ is equal to the flat pull-back. It satisfies the projection formula $ f_*(a\cap_Xf^*(b))=f_*(a)\cap_Yb$ for $a\in \CH^r(X)$, $b\in \CH_s(Y|D,\bullet)$ if $f$ is proper.
\end{thm}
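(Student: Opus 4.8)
The plan is to define the pull-back as the action of the graph correspondence and then extract every listed property from the correspondence formalism of \S\ref{sec:mod pull-back}. Let $\gamma_f = (\id_X, f)\colon X \to X\times_S Y$ be the graph embedding, a closed immersion because $Y\to S$ is separated, and let $\Gamma_f = \gamma_f(X)$ with class $[\Gamma_f]$; it is integral, carried isomorphically onto $X$ by $p_1$, and of codimension $\dim_S Y$ in $X\times_S Y$. Since $Y$ is projective over $S$, the projection $p_1\colon X\times_S Y\to X$ is projective, so the operation $\alpha_*$ of the definition preceding \propref{prop:CorrFunct} is available with $\alpha = [\Gamma_f]$. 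I set $f^* := [\Gamma_f]_*$; tracking degrees through $p_2^*$, through $[\Gamma_f]\cap(-)$ (a cap against a codimension $\dim_S Y$ class) and through $p_{1*}$ shows the composite is codimension-preserving, hence lands in $\CH^s(X|D,\bullet)$. Independence of the cycle representing a class in $\CH^s(Y|D)$ is already built into $\alpha_*$ as a morphism in $\sD(\Ab)$: the product $[\Gamma_f]\cap p_2^*(-)$ is formed only after moving into good position via \thmref{thm:ML2} and \lemref{lem:ML-Mod*}, while the modulus condition on the output is supplied by the containment lemma \propref{prop:CL*}, exactly as in the construction of $\cap_X$ in \thmref{thm:product}.

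For the functoriality $(gf)^* = f^* g^*$ with $Z$ smooth projective over $S$, I would invoke \propref{prop:CorrFunct} once the composition of graph correspondences is identified as a graph. On the triple product $X\times_S Y\times_S Z$ the cycles $p_{12}^*[\Gamma_f]$ and $p_{23}^*[\Gamma_g]$ meet properly, their intersection is the locus $\{(x,f(x),gf(x))\}\cong X$, and $p_{13}$ maps it isomorphically onto $\Gamma_{gf}$; thus $[\Gamma_g]\circ[\Gamma_f] = [\Gamma_{gf}]$, and \propref{prop:CorrFunct} yields $(gf)^* = [\Gamma_{gf}]_* = f^*\circ g^*$. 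The projectivity of $Y$ and $Z$ over $S$ is precisely what makes the three push-forwards on the triple product available, and the coherence of the moving-lemma choices needed to apply \propref{prop:CorrFunct} is furnished by \lemref{lem:Associative}.

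Next, when $f$ is flat I would identify $f^*$ with the flat pull-back of \propref{prop:FPB}. Unwinding, $[\Gamma_f]_*(\beta) = p_{1*}([\Gamma_f]\cap p_2^*\beta)$; since $p_1|_{\Gamma_f}$ is an isomorphism with inverse $\gamma_f$ and $p_2\circ\gamma_f = f$, this push-forward transports the refined restriction of $p_2^*\beta$ to $\Gamma_f$ back to $X$, so that it should equal $\gamma_f^* p_2^*\beta = (p_2\gamma_f)^*\beta = f^*\beta$, the last being the flat pull-back when $f$ is flat. The matching of the Chow-module product against the smooth graph with honest flat restriction is the content of the compatibility of flat pull-back with proper push-forward in transverse Cartesian squares (\propref{prop:pp}), and the modulus is preserved because the divisor on $X$ is $f^*$ of the divisor on $Y$, so \propref{prop:FPB} applies verbatim. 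I expect this step to be the main obstacle: unlike the generic factor in $\cap_X$, the graph cycle $[\Gamma_f]$ is \emph{fixed} and cannot be moved, so one must reproduce the geometric flat pull-back as a refined intersection along $\Gamma_f$ while simultaneously controlling the modulus through \propref{prop:CL*}, rather than by a direct appeal to \thmref{thm:ML2}.

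Finally, for the projection formula with $f$ proper I would combine the definition of $f^*$ with the two projection formulas of \thmref{thm:product}. Starting from $a\cap_X f^*b = a\cap_X p_{1*}([\Gamma_f]\cap p_2^* b)$, the projection formula for the proper map $p_1$ together with associativity rewrites this as $p_{1*}\bigl((p_1^* a\cdot[\Gamma_f])\cap p_2^* b\bigr)$, whose integrand is supported on $\Gamma_f$. Applying $f_*$ and using that $p_2 = f\circ p_1$ on $\Gamma_f$ (with $p_1|_{\Gamma_f}$ an isomorphism and $f$ proper) converts $f_*p_{1*}$ into $p_{2*}$ on this support. A second use of the projection formula, now for the flat projection $p_2$ applied to a cycle proper over $Y$ and with $p_2^*$ of the divisor on $Y$ equal to the divisor on $X\times_S Y$, together with the identity $p_{2*}(p_1^* a\cdot[\Gamma_f]) = f_* a$, produces $f_* a\cap_Y b$. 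The only point needing care here is that $p_2$ is flat but not proper, which is why the argument is carried out on the support $\Gamma_f$; everything else is formal given \thmref{thm:product} and the functoriality of \S\ref{sect:PPFP}, following the model of \cite[\S 5]{KL}.
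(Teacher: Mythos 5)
Your proposal follows the paper's proof almost verbatim in its main lines: the definition $f^*:=$ (action of the graph correspondence), which the paper writes as $[{}^t\Gamma_f]_*$ for $[{}^t\Gamma_f]\in \CH_{\dim_S(X)}$ of the product; functoriality via the identity of correspondences $[{}^t\Gamma_f]\cdot[{}^t\Gamma_g]=[{}^t\Gamma_{gf}]$ together with \propref{prop:CorrFunct}; and the flat case, where the paper's one-line argument is exactly the cycle-level identity $({\rm Id}_X,f)_*\bigl(f^*_{\rm old}(w)\bigr)=[\Gamma_f]\cap p_2^*(w)$ that you describe as transporting the refined restriction along $p_1|_{\Gamma_f}$. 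Where you genuinely diverge is the projection formula: the paper stays entirely inside the correspondence calculus, writing $a\cap_X(-)=\Delta_{X*}(a)_*$, $f_*(a)\cap_Y(-)=\Delta_{Y*}(f_*(a))_*$ and $f_*=[\Gamma_f]_*$, and deduces the formula from \propref{prop:CorrFunct} applied to the single identity $[\Gamma_f]\circ \Delta_{X*}(a)\circ {}^t[\Gamma_f]=\Delta_{Y*}(f_*(a))$, whereas you unwind $a\cap_X f^*(b)$ by hand using the two projection formulas of \thmref{thm:product} and a support argument. Your route is more concrete but makes you manage the moving-lemma bookkeeping at each intermediate cap product (which \lemref{lem:Associative} does furnish), while the paper's delegates all of it to \propref{prop:CorrFunct} in one stroke; both are valid. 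Two of your stated worries are unfounded. First, in the cap product $[\Gamma_f]\cap p_2^*(w)$ the graph class is the \emph{ordinary} Chow factor, i.e.\ precisely the one that \thmref{thm:ML2} allows to move; the content of the flat-compatibility is rather that the specific representative $\Gamma_f$ is already in good position with respect to $p_2^*(w)$, since $p_1$ carries $\Gamma_f\cap |p_2^*(w)|$ to the always-admissible flat pull-back $f^*_{\rm old}(w)$ and the modulus passes via \propref{prop:CL*} — so the paper's identity holds on the nose and no moving is needed. Second, when $f$ is proper, $X$ is proper and quasi-projective over $S$, hence projective over $S$, so $p_2:X\times_S Y\to Y$ is projective and in particular proper; the flat-and-proper projection formula of \thmref{thm:product} therefore applies to $p_2$ directly, and your detour of pushing forward only from the support $\Gamma_f$ is unnecessary (though harmless).
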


\begin{proof}For $[{}^t\Gamma_f]\in \CH_{\dim_S(X)}(Y\times_S X)$, define $f^* := [ {}^t\Gamma_f]_*$, where ${}^t \Gamma_f$ is the transpose of the graph of $f$. The functoriality follows from $ [{}^t\Gamma_f]\cdot [{}^t\Gamma_g]=[{}^t\Gamma_{gf}]$ in $\CH^*(Z \times_S Y \times_S X)$ and Proposition~\ref{prop:CorrFunct}. That the new definition of $f^*$ agrees with the old one for flat $f$ follows from the identity $ ({\rm Id}_X,f)_*(f^*_{{\rm old}}(w))=[\Gamma_f]\cap p_2^*(w).$

The operations $a \cap_X (-)$ and $f_* (a) \cap _Y (-)$ can be written as the actions of correspondences, namely $\Delta_{X*} (a)_*$ and $\Delta_{Y*}(f_* (a))_*$, where $\Delta_X: X \to X \times_S X$ and $\Delta_Y: Y \to Y \times_S Y$ are the diagonals. Furthermore, $f_*$ is given by $[ \Gamma_f]_*$. The projection formula follows from Proposition \ref{prop:CorrFunct} and the equality of correspondences $[\Gamma_f]\circ \Delta_{X*}(a)\circ {}^t[\Gamma_f]=\Delta_{Y*}(f_*(a)).$
\end{proof}

\begin{remk}\label{remk:Add-Chow-PB}
Take $S = \A^1 = \Spec(k[t])$ and $D = \{t^{m+1} = 0\}$ for $m \ge 1$. Let $f: X \to Y$ be a morphism of smooth quasi-projective $k$-schemes, with $Y$ projective. Let $X' = X \times \A^1$, $Y' =Y \times \A^1 $ and $f' = f \times {\rm Id}_{\A^1}$. Then, by Theorem \ref{thm:Pull-back-map}, we deduce $f^*: \TH^*(Y, \bullet, m) \to \TH^*(X, \bullet, m)$ of \cite[Theorem~7.1]{KP}.
\end{remk}

\subsection{Homological Chow motives}\label{sec:CHmot}
 The notion of homological Chow motives was envisioned by A. Grothendieck and several papers in the literature cover this topic. See e.g. \cite{Scholl}. We recall the version over a base scheme $S \in \Sm_k$, mainly from \cite[\S 2.1]{KL}. 

When $S$ is irreducible and given two irreducible $X, Y \in \SmProj_S$, let $\Cor_S ^n (X, Y) =\CH_{\dim_S X - n} (X \times_S Y)$. We extend it to any $X, Y \in \SmProj_S$ and $S \in \Sm_k$ naturally by taking the direct sums over irreducible components. By definition, objects of the category $\Cor_S$ are pairs $(X, n)$ with $X \in \SmProj_S$ and $n \in \mathbb{Z}$, and morphisms are $\Hom_{\Cor_S} \left( (X,n), (Y,m) \right):= \Cor_S ^{m-n} (X, Y)$. Given $X, Y, Z \in \SmProj_S$ with $\alpha \in \Cor_S ^* (X, Y)$ and $\beta \in \Cor_S ^* (Y, Z)$, the composition is defined by $\beta \circ \alpha:= p_{XZ*} ^{XYZ} ( p_{YZ} ^{XYZ*} (\beta) \cup p_{XY} ^{XYZ*} (\alpha))$, where $p_{XZ} ^{XYZ}$, etc. are the projections defined in the obvious way. Given $(X, n), (Y, m) \in \Cor_S$, we have $(X, n) \otimes (Y,m):= (X \times_S Y, n+m)$. The unit object $1$ is $(S, 0)$. So, $\Hom_{\Cor_S} (1, (X, -n)) = \CH_n (X)$ and $\CH$ that sends $(X,n)$ to $\CH_{-n} (X)$ defines a functor $\CH: \Cor_S \to (\textbf{Ab})$. The category $\mot (S)$ of homological Chow motives is defined to be the pseudo-abelian hull of $\Cor_S$, i.e. its objects are $(X, n, \alpha)$ with an idempotent $\alpha \in \End_{\Cor_S} (X)$ and its morphisms are $\Hom_{\mot(S)} ((X, n, \alpha), (Y, m, \beta)):= \beta \Hom_{\Cor_S} ((X, n), (Y, m)) \alpha$. For each $r \in \mathbb{Z}$, let
$$m \left< r \right> : \SmProj_S \to \Cor_S \hookrightarrow \mot (S)$$ be the composition of functors, where the first functor sends an object $X$ to $(X,r)$ and a morphism $f: X \to Y$ to the graph $\Gamma_f \subset X \times_S Y$, and the second functor maps $(X, n)$ to $(X, n, {\rm id})$, which is a full tensor embedding. We write $m(X)\left< n \right> := (X, n, {\rm id})$. 

 We can generalize the discussion of \cite[\S 5.1]{KL} proven for additive higher Chow groups to higher Chow groups with modulus, using results in \S \ref{sec:module}, especially Theorem \ref{thm:product} and in \S \ref{sec:mod pull-back}. 
 
 Let $(S, D)$ be a smooth quasi-projective $k$-scheme with an effective Cartier divisor $D$. As in \S \ref{sec:mod pull-back}, for a morphism $p: X \to S$ in $\Sm_S$, we denote the effective divisor $p^* (D)$ by $D$. In the words of Chow motives over $S$, Proposition \ref{prop:CorrFunct} implies:

\begin{thm}[{cf. \cite[Theorem 5.3]{KL}}]\label{thm:homChowS} 
Let ${\rm Gr}\Ab$ be the category of graded abelian groups. For each integer $s \geq 1$, the assignment $(X, n) \mapsto \CH_{-n} (X|D, s)$ extends to an additive functor
$$\CH_* (- | D,s): \mot (S) \to {\rm Gr}\Ab$$
where $\CH_* (- | D,s) ( m (X) \left< n \right> , \alpha) = \alpha_* ( \CH_{-n} (X|D, s)) \subset \CH_{-n} (X|D, s).$ 
\end{thm}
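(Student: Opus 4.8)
The plan is to verify that the assignment $(X,n) \mapsto \CH_{-n}(X|D,s)$ respects composition of correspondences, using the correspondence action $\alpha_*$ built in Proposition~\ref{prop:CorrFunct}. The key observation is that a morphism $\alpha \in \Hom_{\Cor_S}((X,n),(Y,m)) = \Cor_S^{m-n}(X,Y) = \CH_{\dim_S X - (m-n)}(X\times_S Y)$ is exactly the kind of class for which the operator $\alpha_*: \CH_*(X|D,s) \to \CH_*(Y|D,s)$ was defined in \S\ref{sec:mod pull-back}. First I would observe that the functor is defined on objects by $(X,n) \mapsto \CH_{-n}(X|D,s)$ and on a morphism $\alpha$ by the operator $\alpha_*$ as in \S\ref{sec:mod pull-back}; one then checks that the degree shifts match, i.e.\ that $\alpha_*$ carries $\CH_{-n}(X|D,s)$ into $\CH_{-m}(Y|D,s)$, which is a routine bookkeeping of indices against the definition of $\alpha_*$ via $p_2^*$, $\alpha \cap -$, and $p_{1*}$.

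The central point is functoriality: I must show $(\beta\circ\alpha)_* = \beta_*\circ\alpha_*$, where $\beta\circ\alpha$ is the composition in $\Cor_S$. This is precisely the content of Proposition~\ref{prop:CorrFunct}, so the heart of the proof is to invoke it directly after confirming the projectivity hypotheses hold: in the composition $\Cor_S^*(X,Y)$ followed by $\Cor_S^*(Y,Z)$, the middle and terminal objects $Y,Z$ lie in $\SmProj_S$, hence are projective over $S$, which is exactly what Proposition~\ref{prop:CorrFunct} requires. The identity morphism goes to the identity operator because the diagonal class $[\Delta_X]$ acts as the identity on $\CH_*(X|D,s)$, which follows from unwinding $[\Delta_X]_* = \Delta_{X*}([X])_*$ against the cap product with the fundamental class. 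Additivity over direct sums (disjoint unions of components) is immediate from the $\mathbb{Z}$-bilinear construction of the external product in Lemma~\ref{lem:Ext-Mod}.

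To extend the functor from $\Cor_S$ to the pseudo-abelian hull $\mot(S)$, I would use the standard universal property: any additive functor out of $\Cor_S$ into an idempotent-complete target (such as $\Gr\Ab$, where images of idempotents split) extends uniquely to the Karoubi envelope. Concretely, for an object $(X,n,\alpha)$ with $\alpha$ an idempotent endomorphism, I set $\CH_*((X,n,\alpha)|D,s) := \alpha_*(\CH_{-n}(X|D,s))$, the image of the idempotent operator $\alpha_*$, and this is well-defined precisely because $\alpha_*\circ\alpha_* = (\alpha\circ\alpha)_* = \alpha_*$ by the functoriality just established. A morphism $\beta\,\gamma\,\alpha$ in $\mot(S)$ then acts by $\beta_*\gamma_*\alpha_*$, and one checks compatibility on the images.

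The main obstacle I anticipate is not conceptual but rather in the careful matching of all the degree and variance conventions, since $\CH^r$, $\CH_s$, and the correspondence grading $\Cor_S^{m-n}$ each carry their own index shifts, and the cap product $\cap_X$ of Theorem~\ref{thm:product} lowers dimension by $r$ while $p_2^*$ raises it by $\dim_S X$; one must verify these cancel correctly so that $\alpha_*$ lands in the claimed degree $\CH_{-m}(Y|D,s)$. All the genuinely substantive work—well-definedness of the operators, the modulus condition surviving pull-back and push-forward, and the functoriality $(\beta\circ\alpha)_* = \beta_*\circ\alpha_*$—has already been packaged into Theorem~\ref{thm:product} and Proposition~\ref{prop:CorrFunct}, so the proof should be a short deduction citing these results together with the universal property of the pseudo-abelian hull.
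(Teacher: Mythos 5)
Your proposal is correct and follows essentially the same route as the paper, which proves the theorem by exactly this formal deduction: Proposition~\ref{prop:CorrFunct} supplies the functoriality $(\beta\circ\alpha)_* = \beta_*\circ\alpha_*$ on $\Cor_S$, and the extension to $\mot(S)$ is the standard passage to the pseudo-abelian hull, with $(X,n,\alpha)$ sent to the image $\alpha_*(\CH_{-n}(X|D,s))$ of the idempotent operator. The only quibble is directional: with the paper's convention in \S\ref{sec:mod pull-back}, a class $\alpha \in \CH_*(X\times_S Y)$ acts as $\alpha_*: z_s(Y|D,\bullet) \to z_{s+r}(X|D,\bullet)$ (not $X$ to $Y$ as you wrote), but since all objects of $\Cor_S$ lie in $\SmProj_S$, both projections are projective and the discrepancy is resolved by transposing, so it is a harmless matter of convention rather than a gap.
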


\subsection{Projective bundle formula and blow-up formula}\label{sec:mod proj bundle}

Let $(S,D)$ be a smooth quasi-projective $k$-scheme with an effective Cartier divisor. Let $E$ be a vector bundle on $S$ of rank $r+1$. Let $p:  {\P}(E) \to S$ be the associated projective bundle over $S$. Denote $p^*(D)$ by $D$ for simplicity. 

\begin{thm}[{cf. \cite[Theorem 5.6]{KL}}]\label{thm:PBF-Main}
Let $(S, D)$ and $p: \mathbb{P}(E) \to S$ be as above, and let ${\eta} \in \CH^1(\P(E))$ be the cycle class of the tautological line bundle $\sO(1)$. For any $q , n \ge 0$, the map $\theta: \bigoplus_{i= 0}^r \CH^{q-i}(S|D, n)\to \CH^q(\P(E)|D, n)$ given by $(a_0, \cdots , a_r) \mapsto \sum_{i=0} ^r \eta^i \cap_{\P(E)} p^*(a_i)$ is an isomorphism of $\CH^*(S)$-modules. 
\end{thm}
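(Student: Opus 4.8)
The plan is to reduce the statement to the classical decomposition of the relative Chow motive of a projective bundle in $\mot(S)$, and then transport that decomposition to the groups with modulus by the additive functor $\CH_*(-|D,n)$ of Theorem~\ref{thm:homChowS}. In this way all of the delicate modulus and moving-lemma content is already absorbed into Theorem~\ref{thm:product}, Proposition~\ref{prop:CorrFunct} and Theorem~\ref{thm:homChowS}, and what is left is essentially formal.

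First I would recall the projective bundle formula at the level of relative Chow motives: in $\mot(S)$ there is a canonical isomorphism
\[
\Phi:\ \bigoplus_{i=0}^{r} m(S)\langle -i\rangle \xrightarrow{\ \sim\ } m(\P(E)),
\]
whose restriction to the $i$-th summand is the correspondence $\eta^i \in \CH^i(\P(E)) \subset \Cor_S^*(S,\P(E))$, under the identification $\P(E)\times_S S = \P(E)$. Its inverse is given by correspondences in $\Cor_S(\P(E),S)$ that are explicit polynomials in $\eta$ with coefficients the Segre classes of $E$; the verification that $\Phi$ is an isomorphism reduces to the Grothendieck relation defining $\eta$ together with the identities $p_*(\eta^{r+j}) = s_j(E)$, which are statements about the ordinary Chow groups of $\P(E)$ and $\P(E)\times_S\P(E)$, i.e.\ about $\mot(S)$ alone. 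This is the motivic input of \cite[Theorem~5.6]{KL}, and it is the only genuinely geometric step.

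Next I would apply the additive functor $\CH_*(-|D,n)\colon \mot(S)\to \Gr\Ab$ of Theorem~\ref{thm:homChowS} to $\Phi$. Additivity turns the direct sum decomposition in $\mot(S)$ into a direct sum of abelian groups,
\[
\CH_*(\P(E)|D,n) \cong \bigoplus_{i=0}^{r} \CH_*\bigl(m(S)\langle -i\rangle\,|\,D,n\bigr),
\]
and by the description in Theorem~\ref{thm:homChowS} the $i$-th summand is a shift of $\CH_*(S|D,n)$, which in codimension indexing (via $\CH^q = \CH_{\dim - q}$) becomes $\CH^{q-i}(S|D,n)$. It then remains to identify the induced map with $\theta$. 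By the defining formula of the correspondence action (the construction of $\alpha_*$ preceding Proposition~\ref{prop:CorrFunct}), the correspondence $\eta^i$ sends $a$ to $p_{1*}(\eta^i\cap p_2^*(a))$; since $p_1 = {\rm Id}_{\P(E)}$ and $p_2 = p$ for $\P(E)=\P(E)\times_S S$, this is precisely $\eta^i\cap_{\P(E)} p^*(a)$. Here $p$ is flat and $D = p^*(D)$, so $p^*$ is the flat pull-back and the modulus is preserved, while the cap product is the one supplied by Theorem~\ref{thm:product}. Summing over $i$ recovers $\theta$, which is therefore an isomorphism.

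Finally I would check $\CH^*(S)$-linearity. The module action of $c\in\CH^*(S)$ on $\CH^q(\P(E)|D,n)$ is $x\mapsto p^*(c)\cap x$ and on the source it is $a_i\mapsto c\cap a_i$, so linearity follows from $p^*(c)\cap(\eta^i\cap p^*(a_i)) = \eta^i\cap p^*(c\cap a_i)$, which is the associativity and commutativity of the product of Theorem~\ref{thm:product} together with $p^*(c)\cap p^*(a_i) = p^*(c\cap a_i)$. I expect the main obstacle to be not any single computation but the legitimacy of the reduction: one must be sure that the correspondences realizing the motivic splitting, which are built from cup products of ordinary Chow classes, act on the modulus complexes compatibly and preserve the modulus condition. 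This is exactly what Proposition~\ref{prop:CorrFunct} and Theorem~\ref{thm:homChowS} guarantee, so once those are in hand the argument is purely formal, with the classical motivic decomposition of the first step as the sole nontrivial ingredient.
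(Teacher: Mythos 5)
Your argument is correct, but it is the route the paper's proof only gestures at in its first sentence and then deliberately sets aside: the authors note that the theorem follows formally from Theorem~\ref{thm:homChowS} and the decomposition $m(\P(E))\simeq\bigoplus_{i=0}^{r}m(S)\left<i\right>$ as in \cite[Theorem 5.6]{KL} --- which is exactly your proof --- and then instead give a direct short argument from Proposition~\ref{prop:PFF}, Theorem~\ref{thm:product} and Proposition~\ref{prop:pp}. Concretely, the paper proves injectivity by applying $p_*$ and the projection formula, so that $\theta(a_0,\cdots,a_r)=0$ gives $\sum_{i=0}^{r}p_*(\eta^i)\cap_S a_i=0$; the classical computation of $p_*(\eta^i)$ in ordinary Chow groups yields $a_r=0$, and applying $p_*(\eta\cap(-))$ repeatedly kills the remaining $a_i$ inductively. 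Surjectivity comes from the K\"unneth decomposition of the diagonal, $[\Gamma_{\Delta}]=\sum_{i=0}^{r}(-1)^i p_1^*(\eta^i)\times_S p_2^*(\eta^{r-i})$, the identity $[\Gamma_{\Delta}]_*={\rm Id}$, the projection formula and the base-change compatibility $f^*g_*=g'_*f'^*$ of Proposition~\ref{prop:pp}, which together produce the \emph{explicit} inverse $\alpha\mapsto a_i=(-1)^i p_*(\eta^{r-i}\cap\alpha)$. The trade-off is clear: your approach concentrates all the work in the additive functor of Theorem~\ref{thm:homChowS} (hence in Proposition~\ref{prop:CorrFunct}) plus the classical motivic splitting with its Segre-class inverse correspondences, and it is uniform --- it is literally how the paper proves the blow-up formula (Theorem~\ref{thm:Blowup}, via Manin's identity principle) --- whereas the paper's direct proof is self-contained, avoids verifying the motivic isomorphism and its inverse, and exhibits $\theta^{-1}$ in closed form. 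Your identification of the action of the correspondence $\eta^i$ with $\eta^i\cap_{\P(E)}p^*(-)$ (using $p_1={\rm Id}_{\P(E)}$, $p_2=p$ under $\P(E)\times_S S=\P(E)$, with $p_Y=\id_S$ projective) and your $\CH^*(S)$-linearity check are both sound; the one point to tighten is that Theorem~\ref{thm:homChowS} is stated for $s\ge 1$, so for $n=0$ you should either remark that the functor extends to $s=0$ or treat that case directly, whereas the paper's direct argument is uniform in $n\ge 0$.
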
 

\begin{proof}From Theorem \ref{thm:homChowS}, we deduce Theorem \ref{thm:PBF-Main} formally using the decomposition $\sum_{i=0} ^r \alpha_i : m (\mathbb{P}(E)) \simeq \bigoplus_{i=0} ^r m(X) \left< i \right>$ as in \cite[Theorem 5.6]{KL}. Here, we give a direct short proof using Proposition \ref{prop:PFF}, Theorem \ref{thm:product} and elementary arguments.


For injectivity of $\theta$, suppose that $\theta(a_0, \cdots , a_r) = 0$. Applying $p_*$ by Proposition \ref{prop:PFF}, we get $\sum_{i=0} ^r p_*(\eta^i \cap_{\P(E)} p^*(a_i)) =0$. By the projection formula in Theorem \ref{thm:product}, this means $\sum_{i=0} ^r  p_*(\eta^i) \cap_{S} a_i = 0$. From the known computations of the Chow groups of projective bundles, we get $a_r = 0$. Applying the operation $p_* ( \eta \cap (-))$ repeatedly on $\theta (a_0, \cdots, a_r)$, we deduce inductively that all $a_i  = 0$. Thus $\theta$ is injective.

To prove that $\theta$ is surjective, let $p_1, p_2 : \P(E) \times_S \P(E) \to \P(E)$ be the projections to the first and the second factor. Let $\Delta: \P(E) \to \P(E) \times_S \P(E)$ be the diagonal. For the graph cycle $[\Gamma_{\Delta}]$, note that $[\Gamma_{\Delta}]_*= {\rm Id}$ on $\CH^q (\mathbb{P}(E)|D, n)$, i.e., $p_{1*} \circ ([\Gamma_{\Delta}] \cap (-)) \circ p_2 ^* = {\rm Id}$. By the K\"unneth decomposition of the diagonal of a projective bundle, we have $[\Gamma_{\Delta}] = \sum_{i=0} ^r (-1)^i p_1 ^* (\eta^i)  \times_S p_2 ^*( \eta^{r-i})$ in the Chow group. Thus, for $\alpha \in \CH^q (\mathbb{P}(E) |D, n)$, $\alpha  =  p_{1*} \circ ([\Gamma_{\Delta}]\cap (-)) \circ p_2 ^* (\alpha)  =  \sum_{i=0} ^r (-1)^i p_{1*} (( p_1 ^* ( \eta^i)\times p_2^* (\eta^{r-i}) )\cap p_2 ^* (\alpha))$
$ =^{\dagger} \sum_{i=0} ^r (-1)^i  \eta^i \cap (p_{1*} (p_2 ^* (\eta^{r-i} \cap \alpha))) = ^{\ddagger} \sum_{i=0} ^r (-1)^i \eta^i \cap p^* (p_* (\eta^{r-i} \cap \alpha)),$
where $\dagger$ holds by the projection formula and $\ddagger$ by Proposition \ref{prop:pp}. So, letting $a_i = (-1)^i p_* (\eta^{r-i} \cap \alpha)$, we get $\theta (a_0, \cdots, a_r) = \alpha$. That $\theta$ is a homomorphism of $\CH^* (S)$-modules follows immediately from Theorem \ref{thm:product}.
\end{proof}

Let $(S,D)$ be a smooth quasi-projective $k$-scheme with an effective Cartier divisor. Let $i:Z \inj X$ be a closed immersion, with $Z$ and $X$ smooth projective over $S$. Let $\pi: X_Z \to X$ be the blow-up of $X$ along $Z$ and let $E$ be the exceptional divisor. Let $i_E: E \hookrightarrow X_Z$ be the induced closed immersion and $q= \pi|_E: E \to Z$.

\begin{thm}[{cf. \cite[Theorem 5.8]{KL}}]\label{thm:Blowup} 
The following sequences are split exact:
\begin{eqnarray*}
& &0 \to \CH_s (E|D,n) \overset{(q_*, -i_{E*})}{\to} \CH_s (Z|D, n) \oplus \CH_s (X_Z|D, n) \overset{i_*+\pi_*}{\to} \CH_s (X|D, n) \to 0,\\
& & 0 \to \CH^s (X|D, n) \overset{(i^*, \pi^*)}{\to} \CH^s (Z|D, n) \oplus \CH^s (X_Z|D, n) \overset{q^* - i_E ^*}{\to} \CH^s (E|D, n) \to 0.
\end{eqnarray*}
\end{thm}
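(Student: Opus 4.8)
The plan is to reduce the blow-up formula to the projective bundle formula together with the motivic machinery already assembled, following the strategy of \cite[Theorem 5.8]{KL} but now in the setting with modulus. The crucial point is that the functor $\CH_*(-|D,s)\colon \mot(S) \to {\rm Gr}\Ab$ of Theorem~\ref{thm:homChowS} is additive, so any decomposition of Chow motives that holds in $\mot(S)$ is transported to a corresponding direct-sum decomposition of higher Chow groups with modulus. Therefore the first step is to recall the classical motivic decomposition of the blow-up: for a closed immersion $i\colon Z \inj X$ of smooth projective $S$-schemes of codimension $c$ with blow-up $\pi\colon X_Z \to X$ and exceptional divisor $j\colon E \inj X_Z$, there is an isomorphism in $\mot(S)$
\begin{equation*}
m(X_Z) \simeq m(X) \oplus \bigoplus_{l=1}^{c-1} m(Z)\left< l \right>,
\end{equation*}
where $E = \P(N_{Z/X})$ is the projective bundle on $Z$ associated to the normal bundle and the summands $m(Z)\langle l\rangle$ are cut out by the correspondences built from $i$, $\pi$, $j$, $q=\pi|_E$, and the tautological class $\eta$ on $E$. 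This decomposition is an identity among algebraic correspondences in the ordinary Chow groups $\CH_*(-\times_S-)$, hence lives entirely in $\Cor_S$ and $\mot(S)$.

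The second step is to apply the functor $\CH_*(-|D,s)$ to this motivic decomposition. Because the functor is additive and the morphisms $q_*, i_{E*}, i_*, \pi_*, i^*, \pi^*, q^*, i_E^*$ arise as the actions $\alpha_*$ of the defining correspondences (proper push-forward by Proposition~\ref{prop:PFF} and pull-back by Theorem~\ref{thm:Pull-back-map}, with compatibilities packaged in Theorem~\ref{thm:homChowS}), the two claimed sequences are precisely the split short exact sequences one reads off from the summands. The idempotents giving $m(X)$ and the copies of $m(Z)\langle l\rangle$ furnish explicit splittings, so exactness and splitting are automatic once the decomposition is functorial. Concretely, the homological sequence is obtained by applying $\CH_s(-|D,n)$, and the cohomological sequence by using the $\CH^s$ indexing convention $\CH^s(-|D,n)=\CH_{\dim-s}(-|D,n)$ on the equidimensional pieces; the maps labeled $(q_*,-i_{E*})$, $i_*+\pi_*$ and their transposes are exactly the components of the isomorphism and its inverse.

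One should verify that the correspondences entering the blow-up decomposition, in particular the projection $q\colon E \to Z$ and the inclusions $i, i_E$, satisfy the hypotheses needed to act on groups with modulus: $i$, $i_E$, $\pi$ are proper so Proposition~\ref{prop:PFF} applies to push-forward, and $i^*, \pi^*, q^*$ are defined via Theorem~\ref{thm:Pull-back-map} since $Z$, $X$, $X_Z$, $E$ are smooth and projective over $S$. Here the divisor is always the pulled-back divisor denoted $D$ on each scheme, and the compatibility of these pull-backs and push-forwards with the cap product (the projection formula in Theorem~\ref{thm:product}) is what guarantees that the motivic identities of correspondences are respected after applying $\CH_*(-|D,s)$. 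The main obstacle I anticipate is not the formal argument but making sure the projective bundle structure $E=\P(N_{Z/X})$ and the associated class $\eta$ interact correctly with the modulus $D$; this is precisely where Theorem~\ref{thm:PBF-Main} is invoked to handle the $\bigoplus_{l=1}^{c-1} m(Z)\langle l\rangle$ part, and one must check that the projective bundle formula with modulus already proven covers the bundle over $Z$ carrying the divisor $q^*(D)$. Once that is in place, the proof is a formal consequence of additivity of the functor $\CH_*(-|D,s)$ applied to a motivic decomposition, exactly as in \cite[Theorem 5.8]{KL}.
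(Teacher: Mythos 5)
Your proposal is correct and follows essentially the same route as the paper: the paper's proof likewise deduces the theorem from Theorem~\ref{thm:homChowS} together with the classical blow-up formula for Chow groups (\cite[Proposition~6.7]{Fulton}) and Manin's identity principle, which is exactly what underlies the motivic decomposition $m(X_Z)\simeq m(X)\oplus\bigoplus_{l=1}^{c-1}m(Z)\left< l\right>$ that you invoke and then transport through the additive functor $\CH_*(-|D,s)$. Your closing appeal to Theorem~\ref{thm:PBF-Main} is superfluous---since the decomposition already holds in $\mot(S)$, additivity of the functor suffices---but this does not affect correctness.
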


\begin{proof}
This is a straightforward application of Theorems \ref{thm:product} and \ref{thm:homChowS} and the corresponding blow-up formula for the Chow groups (\cite[Proposition~6.7]{Fulton}). By Theorem \ref{thm:homChowS}, we know the functor $\CH_* (-|D, *)$ from $\SmProj_S$ to ${\rm Gr}\Ab$ extends uniquely to $\mot (S)$ and the theorem is a simple consequence of \cite[Proposition 6.7]{Fulton} and Manin's identity principle.  
\end{proof}

\section{Higher 0-cycles with modulus}\label{sec:multi}
Bloch-Esnault \cite{BE1} and R\"ulling \cite{IR} proved that the additive higher Chow groups of 0-cycles of a field are non-trivial. We study the multivariate analogue in this section, and show that these 0-cycle groups in fact vanish. We prove it in more general circumstances of higher Chow groups with modulus in \S \ref{section:0-cycles}. In \S \ref{section:codim 1}, we study codimension $1$-cycles.

\begin{defn}Let $X \in \Sch_k$. Let $r \geq 1$ be an integer. 
When $(t_1, \cdots, t_r) \in \mathbb{A}^r$ are the coordinates, and $m_1, \cdots, m_r \geq 1$ are integers, let $D_{\un{m}}$ be the divisor on $X \times \mathbb{A}^r$ given by the equation $\{ t_1 ^{m_1} \cdots t_r ^{m_r} = 0 \}$. The groups $\CH^q (X\times \mathbb{A}^r | D_{\un{m}}, n)$ are called \emph{multivariate additive higher Chow groups} of $X$. For simplicity, we often say ``a cycle with modulus $\un{m}$'' for ``a cycle with modulus $D_{\un{m}}$.'' 
\end{defn}

\subsection{$0$-cycles}\label{section:0-cycles}

\subsubsection{In characteristic $0$}
We first suppose $k$ is an algebraically closed field of characteristic $0$ unless said otherwise. We aim to show that $\CH^{r+n} (\A^r|D, n) = 0$, when $r \geq 2$, $n \geq 0$ and $D$ belongs to some class of effective Cartier divisors. See Theorem \ref{thm:0 vanishing}.

Recall that a reduced quasi-projective scheme $X$ of dimension $d \ge 1$ over $k$ is \emph{uniruled}, if there is a reduced scheme $Z$ of dimension $d-1$ and a dominant rational map $Z \times \P^1 \dashrightarrow X$ whose restriction to $\{z\} \times \P^1$ is non-constant for some $z \in Z$. The following Lemma \ref{lem:uniruled} might be well-known to the experts, but the authors were not able to find a reference, so we provide its argument.

\begin{lem}\label{lem:uniruled}
Any integral hypersurface $X \subset \mathbb{P}^n$ of degree $d \leq n$ is uniruled.
\end{lem}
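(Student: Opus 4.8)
The plan is to exhibit a family of rational curves sweeping out $X$, i.e. to produce a dominant rational map $Z\times\mathbb{P}^1\dashrightarrow X$ from a reduced $(n-2)$-dimensional base as in the definition of uniruledness. First I would record two reductions: uniruledness is a birational invariant, and it is inherited by the image of any dominant map out of a uniruled variety. Hence it may be tested after passing to the normalization, or to a resolution of singularities $\mu\colon\widetilde X\to X$ (available since $\Char(k)=0$), whichever is convenient.

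For the range $d\le n-1$ there is a completely elementary argument that needs no smoothness. Fix any point $x\in X$ and let $F$ be the defining form. The lines of $\mathbb{P}^n$ through $x$ are parametrized by a $\mathbb{P}^{n-1}$, and a line $L\ni x$ lies in $X$ exactly when the binary form $F|_L$ vanishes identically; since $F(x)=0$ this is the vanishing of the $d$ coefficients of the non-constant monomials, i.e. $d$ hypersurface conditions on $\mathbb{P}^{n-1}$. As $d\le n-1$ these have a common zero, so through every point of $X$ passes a line $L\subset X$. The incidence variety $\{(x,L)\colon x\in L\subset X\}$ then maps onto $X$ with positive-dimensional fibres, and cutting the base down to dimension $n-2$ gives the desired covering family of rational curves, so $X$ is uniruled.

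The boundary case $d=n$ (and, uniformly, the whole range) I would handle through the Fano property. Being a hypersurface, $X$ is Gorenstein with dualizing sheaf $\omega_X\cong\mathcal{O}_X(d-n-1)$; since $d\le n$ the integer $d-n-1$ is negative, so $\omega_X$ has negative degree on every curve. Take the covering family $C_t=X\cap\Lambda_t$, where $\Lambda_t$ runs over general $2$-planes, so that a general member passes through a prescribed general smooth point of $X$ and, after reducing to the case $\codim_X\operatorname{Sing}(X)\ge 2$, avoids $\operatorname{Sing}(X)$. On $\widetilde X$ the strict transform $\widetilde C_t$ then meets no exceptional divisor, whence $-K_{\widetilde X}\cdot\widetilde C_t=-K_X\cdot C_t=(n+1-d)\,d>0$. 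Since the $\widetilde C_t$ cover $\widetilde X$, the Miyaoka--Mori uniruledness criterion shows $\widetilde X$, and therefore $X$, is uniruled.

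The main obstacle is exactly this boundary case: for $d\le n-1$ the explicit lines already cover $X$, but for $d=n$ no line need pass through a general point, and one must instead produce rational curves of higher degree. Within the Miyaoka--Mori approach the delicate step is ensuring that a member of the covering family avoids $\operatorname{Sing}(X)$, so that the negative canonical degree survives on the resolution; this forces the preliminary reduction, via normalization, to the situation where $\operatorname{Sing}(X)$ has codimension $\ge 2$, together with a dimension count showing that general $2$-plane sections through a general smooth point miss the singular locus.
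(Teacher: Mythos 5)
Your proposal is viable but takes a genuinely different route from the paper. The paper handles the smooth case by observing that $d\le n$ makes $X$ Fano and quoting Koll\'ar--Miyaoka--Mori (Fano $\Rightarrow$ rationally connected $\Rightarrow$ uniruled), and it handles the singular case \emph{without any bend-and-break}: it places $X$ as a closed fibre of the universal family over a smooth curve $C$ inside the parameter space $\mathbb{H}_{d,n}\simeq\mathbb{P}^M$ of degree-$d$ hypersurfaces, normalizes the total space, uses generic smoothness (${\rm char}(k)=0$) so that the general fibre is a smooth hypersurface of degree $\le n$ (hence uniruled by the smooth case), then invokes Koll\'ar's specialization theorem \cite[Corollary IV.1.5.1]{Kollar} to conclude every closed fibre of the normalized family is uniruled, and finally descends along the finite surjection $X'\to X$ via \cite[Lemma IV.1.2]{Kollar}. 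Your argument instead works directly on the possibly singular $X$: the explicit lines for $d\le n-1$ are correct and strictly more elementary than anything in the paper (no smoothness, no characteristic assumption is needed there), and the Miyaoka--Mori numerical criterion on $2$-plane sections is a legitimate alternative for the boundary case $d=n$. Two side remarks: your blanket reduction that uniruledness ``is inherited by the image of any dominant map'' is false as stated ($W\times\mathbb{P}^1\to W$ is a counterexample); it holds for generically finite dominant maps, which fortunately covers your actual uses (normalization and resolution, both birational here). Likewise, the fibres of the incidence variety of lines need not be positive-dimensional --- through a point of $X$ there may be only finitely many lines in $X$ --- but only surjectivity onto $X$ is needed, so this is harmless.

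There is, however, a concrete gap at the normalization step of your $d=n$ argument, and it sits exactly in the case that forces the reduction. If $\operatorname{Sing}(X)$ is a divisor in $X$ (this genuinely occurs for integral hypersurfaces, e.g.\ generic projections of smooth surfaces to $\mathbb{P}^3$ are singular along a double curve), you pass to the normalization $\nu\colon\bar X\to X$ to gain $\codim_{\bar X}\operatorname{Sing}(\bar X)\ge 2$. But $\bar X$ is no longer a hypersurface, and the adjunction computation $\omega_X\cong\mathcal{O}_X(d-n-1)$ does not transport verbatim: one has $K_{\bar X}=\nu^*K_X-\mathfrak{c}$ with $\mathfrak{c}$ the (effective) conductor divisor. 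Consequently your displayed equality $-K_{\widetilde X}\cdot\widetilde C_t=-K_X\cdot C_t=(n+1-d)d$ is unjustified as written; the correct statement is the inequality $-K_{\widetilde X}\cdot\widetilde C_t\ge(n+1-d)d>0$, valid because the conductor correction has the favorable sign and because the lifted plane sections meet $\mathfrak{c}$ properly at smooth points of $\bar X$. For the latter, and for the claim that the strict transforms avoid all exceptional divisors, you also need the (easy, but necessary) dimension count that a general $2$-plane section through a general point of $X$ avoids $\nu(\operatorname{Sing}(\bar X))$, a set of dimension $\le n-3$; your proposal only counts against $\operatorname{Sing}(X)$ itself, which is the wrong set after normalizing. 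With the conductor formula and this adjusted count inserted (plus the routine base change to an uncountable field before applying Miyaoka--Mori at a very general point), your argument closes.
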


\begin{proof}First, consider the special case when $X$ is smooth. Since $d \leq n$, by the adjunction formula (\cite[II-Proposition 8.20, Example 8.20.1, p.182]{Hartshorne}), the anti-canonical bundle of $X$ is ample, i.e. $X$ is Fano. Because ${\rm char} (k) = 0$, by the theorem of Koll{\'a}r-Miyaoka-Mori \cite{KMM1}, \cite{KMM2}, that $X$ is Fano implies that it is rationally connected. Hence it is uniruled. (See \cite[\S V.2]{Kollar}.) This resolves the smooth case.

We now consider the general case. If $n\leq 2$, then $X$ is rational, so we suppose $n \geq 3$. Let $\H_{d,n}$ be the scheme of hypersurfaces in $\mathbb{P}^n$ of degree $d$. Let $M= \begin{pmatrix} n+d \\ d \end{pmatrix}$, which is the number of monomials of degree $d$ in $(n+1)$-variables. Each hypersurface of degree $d$ in $\mathbb{P}^n$ corresponds to a point in the dual projective space $(\mathbb{P}^M)^*$, by mapping the coefficients of a defining equation to the projective coordinate of the coefficients. Hence $\H_{d,n} \simeq \mathbb{P}^M$.

Let $C \subset \H_{d,n}$ be a smooth curve containing the closed point $s\in \H_{d,n}$ that corresponds to $X$. Such $C$ exists because $\mathbb{H}_{d,n} \simeq \mathbb{P}^M$. Let $\pi:\mathcal{X} \to C$ be the universal family of hypersurfaces of degree $d$ parameterized by $C$. This is a closed subscheme of the incidence variety contained in $\mathbb{P}^n \times \mathbb{H}_{d,n}$. Let $\pi_N : \mathcal{X}^N \to \mathcal{X} \overset{\pi}{\to} C$ be the normalization composed with $\pi$. Since ${\rm char} (k) = 0$, by generic smoothness, there is a dense open subset $U \subset C$ such that $\pi^{-1} (U) \to U$ is smooth. In particular, the map $\pi_N ^{-1} (U) \to \pi^{-1} (U)$ is an isomorphism. 

Let $X'= \pi_N ^{-1} (s) \subset \mathcal{X}^N$ be the inverse image of $X = \pi^{-1} (s)$. The general fiber of $\pi$ is a smooth hypersurface in $\mathbb{P}^n$ of degree $\leq n$, so it is uniruled by the smooth hypersurface case we considered already. As $C$ is smooth, by \cite[Corollary IV.1.5.1, p.184]{Kollar}, every closed fiber of $\pi_N$ is also uniruled. Since $X' \to X$ is finite surjective, $X$ must be uniruled by \cite[Lemma IV.1.2, p.182]{Kollar}. 
\end{proof}

\begin{lem}\label{lem:curve}
Let $n \geq 2$. Let $D \subset \A^n$ be an effective Cartier divisor such that $D_{\rm red}$ is a hypersurface of degree $ d \le n$. Then, for each closed point $x \in \A^n \setminus D$, there exists an integral rational affine curve $C \subset \A^n$ such that $x \in C$ and $C \cap D = \emptyset$.
\end{lem}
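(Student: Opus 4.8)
The plan is to produce $C$ not as a line through $x$ (a quick count of the conditions forcing a line to meet $\ov{D_{\rm red}}$ only at infinity shows that lines succeed when $\deg D_{\rm red}\le n-1$ but generally fail when $\deg D_{\rm red}=n$), but as a rational curve lying on a suitable \emph{level hypersurface} of a defining equation of $D_{\rm red}$, extracting that curve by means of \lemref{lem:uniruled}.

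Since $\A^n\setminus D=\A^n\setminus D_{\rm red}$, it suffices to find $C$ avoiding $D_{\rm red}$. I would fix a reduced equation $f\in k[t_1,\dots,t_n]$ with $D_{\rm red}=\{f=0\}$ and $\deg f=d\le n$, and put $c:=f(x)$, which is nonzero since $x\notin D_{\rm red}$. Consider the affine hypersurface $Y:=\{f=c\}$ and its projective closure $W:=\ov{Y}\subset\P^n$, cut out by $F-c\,x_0^{\,d}$ with $F$ the degree-$d$ homogenization of $f$; thus $\deg W=d\le n$. Two observations drive everything: $x\in Y=W\cap\A^n$, and $Y\cap D_{\rm red}=\{f=c\}\cap\{f=0\}=\emptyset$, so \emph{any} curve contained in $Y$ automatically avoids $D$.

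Next I would choose an irreducible component $W_0\subseteq W$ through $x$; being the zero locus of an irreducible factor of $F-c\,x_0^{\,d}$, it is an integral hypersurface of $\P^n$ of degree $\le d\le n$, so \lemref{lem:uniruled} applies and $W_0$ is uniruled. Granting a rational curve on $W_0$ through the prescribed point $x$, let $C_0\subset W_0$ be its closure, an integral rational curve with $x\in C_0$. Since $x$ is an affine point, $C_0\not\subseteq\P^n\setminus\A^n$, so $C:=C_0\cap\A^n$ is a nonempty open subscheme of an integral rational curve, i.e.\ an integral rational affine curve through $x$. As $C\subseteq Y$, we obtain $C\cap D_{\rm red}=\emptyset$, hence $C\cap D=\emptyset$, as required.

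The one genuinely delicate point is upgrading \emph{uniruledness} of $W_0$ (a rational curve through a \emph{general} point) to a rational curve through the \emph{given} point $x$. I would handle this by compactifying the base $Z$ in the definition of uniruledness and resolving the dominant rational map $Z\times\P^1\dashrightarrow W_0$ to a morphism out of a proper variety; its image is then closed and dominant, hence all of $W_0$, so $x$ lies on the image of some fibre $\{z\}\times\P^1$. That this fibre is not contracted to $x$ is clear for smooth $W_0$, where the Fano situation underlying \lemref{lem:uniruled} yields rational connectedness and thus a nonconstant rational curve through every point; in general the contracted fibres sweep out a proper closed subset, which one disposes of either by a specialization argument for a complete family of rational curves or by first establishing the claim for $x$ in a dense open locus. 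The remaining verifications—reducedness of $f$, the choice of a component through $x$, and nondegeneracy of $C_0$ at infinity—are routine.
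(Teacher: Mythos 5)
Your proposal is correct and takes essentially the same route as the paper: after normalizing so that $f(x)=1$, the paper writes $f=1-g$ and takes the integral component $V(g_{i_0})$ of the level set $\{f=f(x)\}$ through $x$ --- precisely your $W_0$ --- then applies \lemref{lem:uniruled} to its projective closure and intersects the resulting rational curve with $\A^n$. The only difference is at your ``delicate point'': where you sketch the upgrade from uniruledness to a rational curve through the \emph{prescribed} point, the paper simply cites \cite[Corollary IV.1.4.4, p.184]{Kollar}, which is exactly the statement your specialization argument re-derives.
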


\begin{proof}We may suppose $D$ is reduced. Since every effective Cartier divisor on $\mathbb{A}^n$ is principal, we may write $D= V(f)$ for some $f \in k[t_1, \cdots, t_n]$, which has degree $\leq n$. Since $k$ is algebraically closed, we can find a linear automorphism $\phi$ of $\mathbb{A}^n$ with $\phi(t_i) = \lambda_0 + \sum_{j=1} ^n \lambda_j t_j$, $\lambda_0, \lambda_j \in k$, such that $\phi (x) = 0$. Since $\phi$ is a linear automorphism, we have $\deg (\phi (f)) = \deg (f)$. So, we reduce to the case when $x = 0$ and $0 \not \in V(f) = D$. By scaling $f$, we may suppose $f(0) = 1$. Write $f = 1 - g$, where $g(0) = 0$ and $\deg (g) = \deg (f)$. Let $g= \prod_{i=1} ^r g_i ^{m_i}$ be the unique factorization of $g$, where each $g_i$ is irreducible, in particular, $(f, g_i) = k[t_1, \cdots, t_n]$. Since $g(0)= 0$, there exists some $i_0$ such that $g_{i_0} (0) = 0$. Note $g_{i_0} | g$ so that $\deg (g_{i_0}) \leq \deg (g) = \deg (f) \leq n$. 

Let $X:= V(g_{i_0})$ and let $\ov{X} \subset \mathbb{P}^n$ be the Zariski closure of $X$. This $\ov{X}$ is an integral hypersurface of $\mathbb{P}^n$ of degree $\leq n$ and is uniruled by Lemma \ref{lem:uniruled}. So, by \cite[Corollary IV.1.4.4, p.184]{Kollar}, there exists an integral rational curve $\ov{C} \subset \ov{X}$ passing through $x \in \ov{X}$. Let $C= \ov{C} \cap X$. This is an integral rational affine curve closed in $X$ through $x$. Since $X \subset \mathbb{A}^n \setminus D$, this $C$ satisfies the desired property.
\end{proof}

\begin{lem}\label{lem:localization}Let $k$ be any field.
For a regular rational affine curve $C$ and $n \geq 0$, $\CH^{n+1}(C, n) = 0$.
\end{lem}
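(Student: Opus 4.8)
The plan is to realize $C$ as an open subscheme of the affine line and then kill the group by Bloch's localization sequence, exploiting that the corresponding group of the base field vanishes for trivial dimension reasons. First I would identify $C$ up to its smooth model: since $C$ is regular and rational its function field is $k(t)$ and its unique smooth projective model is $\P^1_k$, so $C$ is an open subscheme of $\P^1_k$, and being affine it is $\P^1_k \setminus S$ for a nonempty finite set $S$ of closed points. In the setting in which this lemma is applied the field is algebraically closed, so $S$ contains a rational point; choosing coordinates on $\P^1$ sending it to $\infty$ realizes $C$ as an open subscheme $C = \A^1 \setminus \{x_1, \dots, x_m\}$ of $\A^1_k$.

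Next I would record the two vanishing inputs. For a point, a codimension $(n+1)$ cycle in $\Spec(k) \times \square^n$ would have dimension $-1$, so $\CH^{n+1}(\Spec k, n) = \CH_{-(n+1)}(\Spec k, n) = 0$; by Bloch's homotopy invariance this yields $\CH^{n+1}(\A^1, n) \cong \CH^{n+1}(\Spec k, n) = 0$. The very same dimension count shows $\CH_{-n}(Z, n-1) = 0$ for the zero-dimensional closed set $Z = \{x_1, \dots, x_m\}$, since $-n + (n-1) = -1 < 0$.

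I would then apply Bloch's localization long exact sequence to the closed immersion $Z \hookrightarrow \A^1$ with open complement $C$. In homological indexing (recall $\CH^{n+1}(C,n) = \CH_{-n}(C,n)$ as $\dim C = 1$) its relevant segment reads
\[
\CH_{-n}(Z, n) \to \CH_{-n}(\A^1, n) \to \CH_{-n}(C, n) \to \CH_{-n}(Z, n-1),
\]
and by the previous paragraph the flanking group $\CH_{-n}(\A^1, n) = \CH^{n+1}(\A^1, n)$ and the target $\CH_{-n}(Z, n-1)$ both vanish. Exactness then forces $\CH^{n+1}(C, n) = \CH_{-n}(C, n) = 0$.

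The main obstacle is not the bookkeeping but the single deep input, Bloch's localization theorem for higher Chow groups of quasi-projective schemes (\cite{Bl1}, corrected in \cite{Bl2}); everything else is a dimension count combined with homotopy invariance. One subtlety worth flagging is the reduction step: realizing $C$ inside $\A^1$ uses a rational point of $S$, which is automatic when $k = \ov{k}$. Alternatively one can localize directly along $S \hookrightarrow \P^1$, where the projective bundle formula gives $\CH^{n+1}(\P^1, n) \cong K^M_n(k)$ and the sequence identifies $\CH^{n+1}(C,n)$ with the cokernel of $\bigoplus_i \Nm_{k(x_i)/k} \colon \bigoplus_i K^M_n(k(x_i)) \to K^M_n(k)$; this cokernel vanishes exactly when the residue-field norms are jointly surjective, in particular whenever $k$ is algebraically closed or $S$ meets the $k$-rational points.
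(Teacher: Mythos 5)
Your main argument is exactly the paper's proof: reduce to $C$ connected, realize $C$ as $\A^1\setminus Z$ for a finite set of closed points $Z$, and run Bloch's localization sequence, killing the outer terms by homotopy invariance and a dimension count. (Your homological indexing, with third term $\CH_{-n}(Z,n-1)=\CH^{n}(Z,n-1)$, is in fact the correct one; the paper writes $\CH^{n+1}(Z,n-1)$, an index slip that is harmless since both groups vanish for dimension reasons.)

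The subtlety you flag at the end, however, deserves emphasis, because it is a genuine gap in the lemma as stated that the paper's proof passes over. Over an arbitrary field it is false that every connected regular rational affine curve admits an open immersion into $\A^1$: one needs the complement $S\subset\P^1$ to contain a $k$-rational point, since an isomorphism $C\cong\A^1\setminus T$ would extend to an automorphism of the common regular projective model $\P^1$, and automorphisms preserve residue degrees. Concretely, for $k=\R$ the curve $C=\P^1_\R\setminus\{x^2+1=0\}\cong\Spec \R[u,v]/(u^2+v^2-u)$ (a real circle) is regular, rational and affine but not open in $\A^1_\R$, and your own alternative route via localization on $\P^1$ shows the \emph{conclusion} fails for it: $\CH^2(C,1)\cong \cok\bigl(\Nm\colon \C^\times\to\R^\times\bigr)\cong\Z/2$, consistent with the nontriviality of $SK_1$ of the real circle ring via the paper's Lemma~\ref{lem:SK-1} and \cite{KrSr}. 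So the hypothesis ``any field'' must be weakened exactly as your caveat indicates, e.g.\ by requiring $C$ to be an open subscheme of $\A^1$ (equivalently, that $S$ contain a rational point), or that the residue-field norms from $S$ be jointly surjective. The paper's applications are unaffected: in Theorem~\ref{thm:0 vanishing} the field is algebraically closed, and in Theorems~\ref{thm:0 vanishing-additive} and \ref{thm:0 vanishing-2} the lemma is applied to $\{t_1t_2=c\}\cong\G_m$, which is visibly open in $\A^1$.
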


\begin{proof}
We may assume $C$ is connected. For every such $C$, there is an open inclusion $C \hookrightarrow \mathbb{A}^1$, whose complement $Z$ is a finite set of closed points of $\mathbb{A}^1$. In the localization sequence $\CH^{n+1} (\mathbb{A}^1, n) \to \CH^{n+1} (C, n) \to \CH^{n+1} (Z, n-1)$, we know $\CH^{n+1} (\mathbb{A}^1, n) = 0$ by homotopy invariance and $\CH^{n+1} (Z, n-1) = 0$ by the dimension reason. We conclude that $\CH^{n+1} (C, n) = 0$.
\end{proof}

\begin{thm}\label{thm:0 vanishing}
Suppose $k=\ov{k}$ and ${\rm char} (k) = 0$. Let $D \subset \A^r$ be an effective Cartier divisor, with $\deg (D_{\red}) \leq r $. For $r \ge 2$ and $n \geq 0$, $\CH^{n+r}(\A^r|D, n) = 0$.
\end{thm}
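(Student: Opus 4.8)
The plan is to prove that every admissible $0$-cycle with modulus $D$ on $\A^r$ is homologous to zero by exhibiting a rational-curve realization of each cycle and applying the known vanishing on curves. The key geometric input is \lemref{lem:curve}: it tells us that any closed point of $\A^r \setminus D$ lies on an integral rational affine curve disjoint from $D$. I would first reduce to the case $n=0$ separately as a warm-up, and then treat general $n$ as follows.

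For the base case $n=0$, an admissible $0$-cycle in $z^r(\A^r|D,0)$ is a $\Z$-linear combination of closed points $x \in \A^r$ with $x \cap D = \emptyset$, i.e. $x \in \A^r \setminus D$. For each such point, \lemref{lem:curve} produces an integral rational affine curve $C_x \subset \A^r$ with $x \in C_x$ and $C_x \cap D = \emptyset$. Let $j: C_x^N \to \A^r$ be the normalization of $C_x$ composed with the inclusion; since $C_x \cap D = \emptyset$, we have $j^{-1}(D) = \emptyset$, so $j:(C_x^N,\emptyset)\to(\A^r,D)$ is a morphism of seds and \corref{cor:induced map} gives a factorization of the push-forward through $\CH_0(\A^r|D,0)$. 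The point $x$ pulls back to a $0$-cycle on the regular rational curve $C_x^N$, and by \lemref{lem:localization} with $n=0$ we have $\CH^1(C_x^N,0)=0$, so the class of $x$ vanishes after push-forward. Hence $\CH^r(\A^r|D,0)=\CH_0(\A^r|D,0)=0$.

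For general $n \ge 0$ the strategy is to upgrade this to the level of the cycle complex using the module structure. The idea is that a $0$-dimensional cycle with modulus on $\A^r\times\square^n$ lives, up to the face and modulus conditions, over finitely many closed points of $\A^r\setminus D$; choosing rational curves $C$ through those points via \lemref{lem:curve} and pulling the whole situation back along $j:(C^N,\emptyset)\to(\A^r,D)$, I would reduce the computation of $\CH^{n+r}(\A^r|D,n)$ to the computation of the corresponding group on the regular rational curve $C^N$. On a curve the relevant group is $\CH^{n+1}(C^N,n)$, which vanishes by \lemref{lem:localization}. To make ``reduce to a curve'' precise one invokes \corref{cor:induced map} (so that the $\emptyset$-modulus push-forward factors through the modulus group) together with the functoriality from \S\ref{sect:PPFP}; the vanishing on $C^N$ then forces the vanishing on $\A^r|D$.

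The main obstacle I anticipate is controlling the passage from a single closed point to an arbitrary admissible cycle in $\square^n$ for $n\ge 1$: a generic cycle $V\subset \A^r\times\square^n$ of dimension $n$ need not be supported over a finite set of points of $\A^r$, so one cannot literally pick one curve per point. The honest route is to move $V$ so that its projection to $\A^r$ lands in (finitely many) rational curves disjoint from $D$, and then argue on $C^N\times\square^n$ where the ambient is a curve. One must check that the curve-pullback preserves the modulus condition (which it does by \lemref{lem:cancel} or directly since $j^{-1}(D)=\emptyset$) and the face condition (by the flat/proper functoriality of \S\ref{sect:PPFP}). The cleanest packaging is probably to combine \corref{cor:induced map} with the module structure of \thmref{thm:product}, so that the class of any cycle is expressed as a push-forward from a curve, where \lemref{lem:localization} gives the vanishing; verifying admissibility is maintained throughout this reduction is the technical heart of the argument.
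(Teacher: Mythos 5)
Your $n=0$ argument is exactly the paper's, but the ``main obstacle'' you anticipate for $n\ge 1$ is a phantom, caused by a miscount of dimensions, and the route you propose to handle it would not work. The group $\CH^{n+r}(\A^r|D,n)$ is the $n$-th homology of $z^{n+r}(\A^r|D,\bullet)$, and in degree $n$ the admissible cycles have dimension $(r+n)-(n+r)=0$ in $\A^r\times\square^n$: they are finite $\Z$-linear combinations of closed points disjoint from $D\times\square^n$ and from all faces, \emph{not} $n$-dimensional cycles, so there is nothing to move. Moreover $z^{n+r}(\A^r|D,n-1)=0$ for dimension reasons, so every degree-$n$ chain is automatically closed and it suffices to show each closed point $z$ is a boundary. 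Since $k=\ov{k}$, we have $z=(x,y)$ with $x\in\A^r\setminus D$ a closed point and $y\in\square^n$ avoiding all faces, and your $n=0$ argument applies verbatim for every $n$: take the integral rational affine curve $C\ni x$ with $C\cap D=\emptyset$ from \lemref{lem:curve}, observe $z'=(x,y)\in z^{n+1}(C,n)$, and push forward via \corref{cor:induced map}. This is precisely the paper's proof; it reduces the theorem to $\CH^{n+1}(C,n)=0$, which the paper obtains from \lemref{lem:localization} after noting that the normalization push-forward $\pi_*:\CH^{n+1}(C^N,n)\to\CH^{n+1}(C,n)$ is surjective because $k$ is algebraically closed (\cite[Proposition 1.3]{Bl1}); alternatively, as in your $n=0$ case, one may push forward from $C^N$ directly, since the composite $C^N\to C\hookrightarrow\A^r$ is proper and misses $D$.

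Beyond being unnecessary, your proposed fix is unsupported by the tools at hand: the paper has no moving lemma for cycles \emph{with modulus} (\thmref{thm:ML2} concerns ordinary higher Chow cycles only---indeed the absence of such a moving lemma is what makes the module structure of \thmref{thm:product} delicate), so ``moving $V$ so that its projection lands in finitely many rational curves disjoint from $D$'' has no justification in this framework. Also, ``pulling the whole situation back along $j:(C^N,\emptyset)\to(\A^r,D)$'' is not a defined operation, since $j$ is neither flat nor covered by \thmref{thm:Pull-back-map}; the correct operation, which you do also invoke, is the proper push-forward of \corref{cor:induced map}. Once the dimension count is corrected, these detours disappear and your outline coincides with the paper's argument.
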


\begin{proof}Note that a $0$-cycle has modulus $D$ if and only if it is disjoint from $D \times \square^n$. Let $z \in (\mathbb{A}^r \setminus D) \times \square^n$ be a closed point. We claim that $[z]= 0 $ in $\CH^{n+r} (\mathbb{A}^r|D, n)$. Let $x= p_1(z)$ and $y= p_2 (z)$, where $p_1, p_2$ are the projections from $\mathbb{A}^r \times \square^n$ to $\mathbb{A}^r$ and $\square^n$, respectively. They are closed points, and $x \not \in D$. By Lemma \ref{lem:curve}, we have a closed immersion $\iota:C \hookrightarrow \mathbb{A}^r$ of an integral rational affine curve through $x$ with $C \cap D = \emptyset$. By Corollary \ref{cor:induced map}, there is the push-forward map $\iota_*: \CH^{n+1} (C, n) \to \CH^{n+r} (\mathbb{A}^r|D, n)$, and for $z':=(x,y) \in C \times \square^n$, $\iota_* ([z']) = [z]$. It is therefore sufficient to show that $\CH^{n+1} (C, n) = 0$ in order to prove the theorem.

To prove it, take the normalization $\pi: C^N \to C$. This $C^N$ is a regular connected rational affine curve. Since $\pi$ is finite surjective and $k$ is algebraically closed, the push-forward $\pi_*: \CH^{n+1} (C^N, n) \to \CH^{n+1} (C,n)$ is surjective (see \cite[Proposition 1.3]{Bl1}). We are done by Lemma \ref{lem:localization}.
\end{proof}

\subsubsection{In characteristic $>0$}We now consider the cases  when $k$ is a finite field $\mathbb{F}_q$ or its algebraic closures $\ov{\mathbb{F}}_q$.  For a scheme $X$, let $\mathcal{K}_i ^M$ denote the Zariski sheaf whose stalks are the Milnor $K$-theory of the local rings of $X$.

\begin{lem}\label{lem:SK-1}
Let $X$ be a smooth curve over a field $k$ and let $n \ge 0$. Then, there is a natural isomorphism $\CH^{n+1}(X,n) \overset{\sim}{\to} H^1_{\rm Zar}(X, \sK^M_{n+1})$.
\end{lem}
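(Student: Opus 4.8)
The plan is to realize both sides as the cokernel of the same tame-symbol (residue) map on the smooth curve $X$. Write $X^{(1)}$ for the set of closed points and $\eta$ for the generic point, so $k(\eta) = k(X)$. I will show
\[
\CH^{n+1}(X, n) \cong \cok\Big( K^M_{n+1}(k(X)) \xrightarrow{\ \partial\ } \bigoplus_{x \in X^{(1)}} K^M_n(k(x)) \Big) \cong H^1_{\rm Zar}(X, \sK^M_{n+1}),
\]
the first isomorphism coming from Bloch's localization sequence and the second from the Gersten resolution of $\sK^M_{n+1}$.

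For the left isomorphism, I would organize Bloch's localization sequences (of the form already used in Lemma~\ref{lem:localization}) over all nonempty opens $U \subseteq X$ into the niveau spectral sequence of the curve. Concretely, for $U$ with finite complement $Z$, smoothness of $X$ makes $Z$ a disjoint union of smooth codimension-$1$ points, and the relevant segment of the localization sequence reads
\[
\CH^{n+1}(U, n+1) \xrightarrow{\ \partial_U\ } \CH^n(Z, n) \xrightarrow{\ i_*\ } \CH^{n+1}(X, n) \xrightarrow{\ j^*\ } \CH^{n+1}(U, n).
\]
Higher Chow groups commute with filtered colimits along open immersions, so passing to the colimit over $U$ replaces the $U$-terms by groups at $\Spec k(X)$ with codimension preserved. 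Here $\CH^{n+1}(\Spec k(X), n) = 0$ for dimension reasons, while by the theorem of Nesterenko--Suslin and Totaro one has $\CH^{n+1}(\Spec k(X), n+1) \cong K^M_{n+1}(k(X))$ and $\CH^n(k(x), n) \cong K^M_n(k(x))$ for each $x \in X^{(1)}$. The colimit sequence therefore collapses to the right-exact sequence
\[
K^M_{n+1}(k(X)) \xrightarrow{\ \partial\ } \bigoplus_{x \in X^{(1)}} K^M_n(k(x)) \xrightarrow{\ i_*\ } \CH^{n+1}(X, n) \to 0,
\]
giving $\CH^{n+1}(X, n) \cong \cok(\partial)$.

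For the right isomorphism, I would invoke the Gersten resolution of the Zariski sheaf $\sK^M_{n+1}$ on the smooth curve $X$ (Kerz; for curves this is the classical tame-symbol sequence of Bass--Tate and Kato),
\[
0 \to \sK^M_{n+1} \to (i_\eta)_* K^M_{n+1}(k(X)) \xrightarrow{\ \partial_{\rm tame}\ } \bigoplus_{x \in X^{(1)}} (i_x)_* K^M_n(k(x)) \to 0,
\]
which is a flasque resolution. As $X$ is one-dimensional, this two-term complex computes $H^1_{\rm Zar}(X, \sK^M_{n+1}) \cong \cok(\partial_{\rm tame})$.

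The only nontrivial point, and the one I expect to be the main obstacle, is the identification $\partial = \partial_{\rm tame}$: one must check that the boundary maps produced by Bloch's localization agree, under the Nesterenko--Suslin--Totaro isomorphism, with the Milnor residue maps. This compatibility is standard but delicate, and I would cite it rather than reprove it. Granting it, the two cokernels coincide and the isomorphism follows. Finally, naturality is automatic: the localization sequence, the Nesterenko--Suslin--Totaro isomorphism, and the Gersten resolution are all functorial, so the resulting isomorphism $\CH^{n+1}(X, n) \cong H^1_{\rm Zar}(X, \sK^M_{n+1})$ is natural in $X$.
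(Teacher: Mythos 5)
Your proof is correct, but it follows a genuinely different route from the paper's. Both arguments present the two sides as cokernels of a tame-symbol map: the paper also starts from the tautological right-exact sequence $z^{n+1}(X,n+1) \xrightarrow{\partial} z^{n+1}(X,n) \to \CH^{n+1}(X,n) \to 0$ (which is elementary here, since admissible $0$-cycles meet no face and hence have zero boundary --- no localization theorem needed) and from Kato's Gersten sequence for $\sK^M_{n+1}$. But where you obtain the presentation of $\CH^{n+1}(X,n)$ by residues of Milnor symbols from Bloch's localization theorem, a filtered colimit over opens, and the Nesterenko--Suslin--Totaro theorem applied at the generic point and at closed points, the paper works entirely at cycle level: it defines $\phi_n$ on $0$-cycles (the norm of the Milnor symbol of the $\square^n$-coordinates of a closed point) and $\theta_{n+1}$ on $1$-cycles (the norm of the symbol of the coordinate functions of a curve), verifies commutation with the tame symbol $\delta$ via Bass--Tate, and then constructs an explicit inverse $\bar{\psi}_n$, checking the Steinberg and multiplicativity relations with Totaro's explicit rational curves and killing $\im(\delta)$ with explicit graph $1$-cycles $\xi_{n+1}$. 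The trade-off: your argument is shorter and conceptually transparent, but it leans on two serious external inputs --- Bloch's localization theorem (legitimate here, as a smooth curve over a field is quasi-projective) and, as you rightly flag as the crux, the compatibility of the localization boundary with the Milnor residue under the Nesterenko--Suslin--Totaro isomorphism, which amounts to identifying the top-weight Gersten complex of higher Chow groups with the Milnor $K$-theory Gersten complex and must be cited with care. The paper's construction is longer but self-contained modulo Totaro's paper and Bass--Tate, produces the inverse isomorphism explicitly, and sidesteps both external inputs; in effect it reproves, directly on generators, exactly the sliver of that compatibility your argument would import.
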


\begin{proof}
We may assume $X$ is connected. There are exact sequences: 
\begin{equation}\label{eqn:rev_1_SK-1-1}
z^{n+1}(X, n+1)  \overset{\partial}{\to} z^{n+1}(X, n) \to \CH^{n+1}(X,n) \to 0;
\end{equation}
\begin{equation}\label{eqn:rev_1_SK-1-2}
K^M_{n+1}(k(X)) \overset{\delta}{\to} \bigoplus_{x \in X_0}  K^M_{n}(k(x)) \to H^1_{\rm Zar}(X, \sK^M_{n+1}) \to 0.
\end{equation}
The first sequence is exact by definition and the second is exact by Kato's resolution of the Milnor $K$-theory sheaves on smooth schemes (\cite{Kato}), where $X_0$ is the set of closed points of $X$. In the following, we first define maps $\phi_n, \theta_{n+1}$ and $ \psi_n$ that will relate the first two terms of \eqref{eqn:rev_1_SK-1-1} and \eqref{eqn:rev_1_SK-1-2}.

Let $p_X$ and $p_{n}$ be the projections from $X \times \square^{n}$ to $X$ and $\square^{n}$, respectively. Let $q_i: \square^n \to \square$ for $1 \leq i \leq n$ be the projection to the $i$-th $\square.$

We first define $\phi_n: z^{n+1} (X, n) \to \bigoplus_{x \in X_0} K_n ^M (k(x))$ as follows: when $z \in z^{n+1} (X, n)$ is a point, consider $\phi_n ([z]):= N_z ( \{ z_1, \cdots, z_{n} \})$, where $z_i := q_i \circ p_{n}(z)$ and $N_z: K_n ^M (k(z)) \to K_n ^M (k (p_X (z)))$ is the norm map. We extend $\phi_n$ linearly. (Note that this definition makes sense for any $k$-scheme $X$, not just for smooth curves. We write $\phi_n ^X$ for $\phi_n$ if we need to specify $X$.)

Now we define $\theta_{n+1}: z^{n+1} (X, n+1) \to K_{n+1} ^M (k(X))$ as follows: let $(y_1, \cdots, y_i) \in \square^i$ be the coordinates for $i \geq 1$. Let $C \in z^{n+1} (X, n+1)$ be an irreducible curve. If $p_X(C)$ is a point, we define $\theta_{n+1} ([C]) = 0$. Otherwise, the map $p_X|_C: C \to X$ is generically finite. The composites $q_i \circ p_{n+1}|_C: C \overset{}{\to} \square^{n+1} \overset{}{\to} \square$, $1 \leq i \leq n+1$ yield rational functions $f_1, \cdots, f_{n+1}$ on $C$. Proper intersection of $C$ with the faces of $X \times \square^{n+1}$ means that $f_i \not = 0$ for $1 \leq i \leq n+1$. So, they define a unique element $[f]_C:= \{ f_1, \cdots, f_{n+1} \} \in K_{n+1} ^M (k(C))$. We define $\theta_{n+1} ([C]) = N_C ([f]_C)$, where $N_C: K_{n+1} ^M (k(C)) \to K_{n+1} ^M (k(X))$ is the norm map via the generically finite map $p_X|_C$. We extend $\theta_{n+1}$ linearly. 

We claim that the diagram
\begin{equation}\label{eqn:SK-1-2}
\xymatrix{
z^{n+1}(X, n+1) \ar[d]_{\partial} \ar[r]^{\theta_{n+1}} & K^M_{n+1}(k(X)) \ar[d]_{\delta}   \\
z^{n+1}(X, n) \ar[r]^{  \phi_{n} \ \ } &  \underset{x \in X_0}{\bigoplus} K^M_{n}(k(x)),}
\end{equation}
commutes. Here, if $C$ is an irreducible curve in $z^{n+1} (X, n+1)$ such that $p_X (C)$ is a closed point, say $\{x \} \in X$, then one can regard $C$ as $\{ x \} \times C'$ for a curve $C' \subset \square^{n+1}$. Then $\phi_n \circ \partial ([C]) = \phi_n ^X \circ \partial ([C]) = \phi_n ^{\{x \}} \circ \partial ^{\{x \}} ([C'])$, where $\partial ^{\{x \}}: z^{n} (k(x), n+1) \to z^n (k(x), n)$ is the boundary map for $\{x \}$, and the latter one $\phi_n ^{\{x \}} \circ \partial ^{\{x \}} ([C'])$ is $0$ because the map $\phi_n ^{\{x \}} : z^n (k(x), n) \to K_n ^M (k(x))$ sends the boundary $\partial^{\{x\}} ( z^n (k(x), n+1))$ to $0$, inducing the homomorphism $\phi_n ^{\{x \}}: \CH^n (k(x), n) \to K_n ^M (k(x))$ as in the Nesterenko-Suslin--Totaro isomorphism \cite{Totaro}. In particular, $\phi_n \circ \partial ([C]) = \delta \circ \theta_{n+1} ([C]) = 0$. In case $C \to X$ is dominant, we have $\phi_n \circ \partial ([C]) = \delta \circ \theta_{n+1} ([C])$ by the definition of tame symbols in Milnor $K$-theory. See \cite{BT} for details. Hence the diagram \eqref{eqn:SK-1-2} induces a homomorphism $\bar{\phi}_n: {\rm coker} \  \partial  \to {\rm coker} \  \delta$. Note that ${\rm coker} \ \partial  = \CH^{n+1} (X, n) = z^{n+1} (X, n)/ \partial ( z^{n+1} (X, n+1))$ and ${\rm coker} \ \delta = H^1 _{\rm Zar} (X, \mathcal{K}_{n+1} ^M)$ by \eqref{eqn:rev_1_SK-1-1} and \eqref{eqn:rev_1_SK-1-2}.

We now define $\psi_n: \bigoplus_{x \in X_0} K_n ^M (k(x)) \to z^{n+1} (X, n)/ \partial (z^{n+1} (X, n+1)) $ as follows: when $x \in X_0$ and $[f] = \{ f_1, \cdots, f_n \} \in K_n ^M (k(x))$ with $f_i \in k(x)^{\times} $, we let $\widetilde{\psi}_n ([f])$ be the graph of the morphism $(f_1, \cdots, f_n)$, which is a closed point in $X \times \square^n$ if $f_i \not = 1$ for all $i$, or $\emptyset$ if $f_i = 1$ for some $i$. This does not intersect any face of $X \times \square^n$ because none of $f_i$ is $0$ (nor $\infty$, obviously), so it defines an element in $z^{n+1} (X, n)$. Its image in $z^{n+1} (X, n) / \partial (z^{n+1} (X, n+1))$ will be called $\psi_n ([f])$. To see that this is well-defined, it reduces to show that for $f_1, f_3, \cdots, f_n \in k(x)^{\times}$, with $f_1 \not = 1$, the graph of $(f_1, 1-f_1, f_3, \cdots, f_n)$ vanishes in the quotient $z^{n+1} (X, n) / \partial (z^{n+1} (X, n+1))$. For this, we use the curve $\gamma: t \mapsto \{x \} \times  \left( t, 1-t, \frac{ f_1 -1}{1-t}, f_3, \cdots, f_n \right) \subset X \times \square^{n+1},$ where the first three coordinates of $\square^{n+1}$ are exactly those of the rational curve of B. Totaro in \cite[p.182, line 28]{Totaro} used to kill the Steinberg relation. Its only intersection with any codimension $1$ face is the closed point $\{x \} \times (f_1, 1-f_1, f_3, \cdots, f_n)$. Hence $\psi_n ( \{ f_1, 1-f_1, f_3, \cdots, f_n \}) = 0$. This proves the well-definedness of $\psi_n$ as a set map. To show that it is a group homomorphism, it reduces to check that when $n=1$, we have $\psi_1 (f) + \psi_1 (1/f) = 0$ for $f \in k(x) ^{\times} $ and $\psi_1 (f) + \psi_1 (g) = \psi (fg)$ for $f, g  \in k(x) ^{\times} $. This also can be done by taking the curve $\gamma' : t \mapsto \{ x \} \times \left( t, \frac{ ft-fg}{t - fg}\right) \subset X \times \square^2,$ where the two coordinates of $\square^2$ are exactly the the rational curve of B. Totaro in \cite[p.182, line 8]{Totaro}. The boundary of $\gamma'$ gives the first relation when $fg=1$ and the second relation in general. This shows $\psi_1$ as well as $\psi_n$ is a group homomorphism. Extend $\psi_n$ to the direct sum over $X_0$.

We now show that $\psi_n \circ \delta = 0$. Let $[f] \in K_{n+1} ^M (k(X))$. We may assume that this is given by $(n+1)$ distinct rational functions in $k(X) ^{\times} \setminus \{ 1 \}$ for otherwise $[f]=0$ so that there is nothing to prove. Note that $k(X)$ is the fraction field of the dvr $A= \mathcal{O}_{X,x}$ for any chosen $x \in X_0$. Fix one $x \in X_0$. Then we can write $[f] = \{ f_1, \cdots, f_n, u \pi^r \}$, for some $f_i \in A^{\times}$ for $1 \leq i \leq n$, $u \in A^{\times}$, $\pi$ is a uniformizing parameter of $A$, and $r \in \mathbb{Z}$. In this case, by the construction we know that $\delta_x ([f]) = r \{ \bar{f}_1, \cdots, \bar{f}_n \} \in K_n ^M (k(x))$, where $\bar{f}_i \in k(x)$ is the residue class of $f_i$. See \cite[\S 4]{BT}. We repeat it for each $x \in X_0$. To show that $\psi_n \circ \delta ([f]) = 0$ in $z^{n+1} (X, n)/ \partial (z^{n+1} (X, n+1))$, we need to construct a $1$-cycle in $z^{n+1} (X, n+1)$, whose boundary is equal to $\widetilde{\psi}_n \circ \delta ([f])$, where $\widetilde{\psi}_n$ is defined at the beginning of the paragraph that defines $\psi_n$. To do so, construct $\xi_{n+1} (f_1, \cdots, f_n, u\pi^r) \subset X \times \square^{n+1}$ to be the intersection of the graphs of $(f_1, \cdots, f_n, u \pi^r )$ in $X \times \ov{\square}^{n+1}$ with $X \times \square^{n+1}$. Since the rational functions are all distinct, the intersection defines a curve and this curve intersects all faces properly. By construction, we have $\partial (\xi_{n+1} (f_1, \cdots, f_n, u \pi^r)) = \widetilde{\psi}_n \circ \delta ([f])$. Thus we have the induced map $\bar{\psi}_n : {\rm coker} \ \delta \to {\rm coker} \ \partial$.
That $\bar{\phi}_n \circ \bar{\psi}_n$ and $\bar{\psi}_n \circ \bar{\phi}_n$ are the identity maps is straightforward on the generators by definition.
\end{proof}

\begin{lem}\label{lem:SK-1-0}
Let $k = \mathbb{F}_q$ or $\ov{\mathbb{F}}_q$. Let $X$ be a smooth curve over $k$. Then, $\CH^{n+1}(X, n) = 0$ for $n \ge 2$. If $X$ is affine, then $\CH^2(X,1)=0$ as well.
\end{lem}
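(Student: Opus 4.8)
The plan is to deduce both vanishings from the isomorphism $\CH^{n+1}(X,n)\cong H^1_{\Zar}(X,\sK^M_{n+1})$ of \lemref{lem:SK-1}, together with Kato's resolution \eqref{eqn:rev_1_SK-1-2}, which presents this cohomology group as the cokernel of the tame symbol
\[
\delta\colon K^M_{n+1}(k(X))\longrightarrow \bigoplus_{x\in X_0}K^M_{n}(k(x)).
\]
Thus in both cases it suffices to control the image of $\delta$.

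For $n\ge 2$ the statement is immediate. Every residue field $k(x)$ at a closed point $x\in X$ is a finite extension of $k$, hence an algebraic extension of $\mathbb{F}_q$ (in the case $k=\ov{\mathbb{F}}_q$ it simply equals $\ov{\mathbb{F}}_q$). By Bass--Tate the Milnor $K$-groups of a finite field vanish in degrees $\ge 2$, and passing to filtered colimits the same holds for any algebraic extension of $\mathbb{F}_q$. Hence $K^M_n(k(x))=0$ for every $x$, the target of $\delta$ is zero, and therefore $H^1_{\Zar}(X,\sK^M_{n+1})=0$, i.e.\ $\CH^{n+1}(X,n)=0$. This step uses nothing about affineness.

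The affine case $n=1$ is the real content. Here $H^1_{\Zar}(X,\sK^M_2)=\cok(\delta)$ with $\delta\colon K^M_2(k(X))\to\bigoplus_{x\in X_0}k(x)^\times$ the ordinary tame symbol, and I must show that $\delta$ is surjective. First I would record the identification $H^1_{\Zar}(X,\sK^M_2)\cong SK_1(A)$ for $A=\mathcal{O}(X)$: the Brown--Gersten--Quillen spectral sequence for the curve $X$ (using $\sK_2=\sK^M_2$ by Matsumoto's theorem) degenerates to a split exact sequence $0\to H^1_{\Zar}(X,\sK_2)\to K_1(A)\to A^\times\to 0$ whose kernel is exactly $SK_1(A)$. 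One then invokes the vanishing $SK_1(A)=0$ for the coordinate ring of a smooth affine curve over a finite field. Concretely, surjectivity of $\delta$ can be exhibited by passing to the smooth projective model $\bar X$ and the nonempty set $S:=\bar X\setminus X$: given $(a_x)_{x\in X_0}$ of finite support, Weil reciprocity shows that the image of the full tame symbol on $\bar X$ lies in the kernel of the total norm $N\colon\bigoplus_{x\in\bar X_0}k(x)^\times\to k^\times$, $(a_x)\mapsto\prod_x N_{k(x)/k}(a_x)$, and for curves over a finite field this image is \emph{exactly} $\ker N$ (the model case $\bar X=\mathbb{P}^1$ being Milnor's computation of $K_2$ of a rational function field). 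Choosing a point $P_0\in S$ and using that $N_{k(P_0)/k}$ is surjective, I pick $b_{P_0}\in k(P_0)^\times$ with $N_{k(P_0)/k}(b_{P_0})=\bigl(\prod_{x\in X_0} N_{k(x)/k}(a_x)\bigr)^{-1}$; then the tuple $(a_x)_{x\in X_0}$ extended by $b_{P_0}$ and by $1$ on $S\setminus\{P_0\}$ lies in $\ker N$, hence is a tame symbol on $\bar X$, and its restriction to $X_0$ recovers $(a_x)$. The argument over $\ov{\mathbb{F}}_q$ is identical, the relevant norms now being trivially surjective.

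The hard part is precisely this affine $n=1$ claim, and within it the fact that the tame-symbol image on $\bar X$ fills out all of $\ker N$ --- equivalently the vanishing $SK_1(A)=0$. This is where the arithmetic hypothesis on $k$ is essential: over a general field the group $SK_1$ of an affine curve (for instance the complement of a point in an elliptic curve over $\mathbb{C}$) is typically enormous, whereas over $\mathbb{F}_q$ or $\ov{\mathbb{F}}_q$ the residue fields are finite (respectively torsion), which forces the relevant Mennicke/tame symbols to degenerate. I would therefore either cite the known vanishing of $SK_1$ for smooth affine curves over finite fields, or complete the norm-adjustment argument above once the image computation for general projective curves is in place.
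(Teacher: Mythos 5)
Your proposal follows essentially the same route as the paper: both reduce via Lemma \ref{lem:SK-1} to Milnor $K$-theory, dispose of $n\ge 2$ by Steinberg's vanishing of $K^M_n$ of finite fields plus a colimit argument for $\ov{\F}_q$, and settle the affine $n=1$ case through the identification $H^1_{\Zar}(X,\sK^M_2)\simeq SK_1(X)$ (the paper cites \cite{KrSr}) together with the vanishing $SK_1=0$, for which the paper invokes Bass--Milnor--Serre \cite{BMS} when $k=\F_q$ and the direct limit $SL(A)/E(A)=\varinjlim_{\ell} SL(A'_\ell)/E(A'_\ell)=0$ when $k=\ov{\F}_q$. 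Your Weil-reciprocity/norm-adjustment sketch is, as you yourself note, a reformulation equivalent to that same $SK_1$ vanishing rather than an independent proof of it, so the two arguments coincide in substance.
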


\begin{proof}
We may assume $X$ is connected. When $n \geq 2$, by Lemma \ref{lem:SK-1}, it suffices to check that $K_n ^M (F)= 0$, when $F = k(x)$ for $x \in X_0$, which is either finite or the algebraic closure of a finite field. This is a result of Steinberg (see \cite[Example~1.3]{Milnor}) when $F$ is finite. When $F$ is the algebraic closure of a finite field, a direct limit argument shows that $K_n ^M (F) = 0$. 

Now suppose $n=1$ and $X= \Spec (A)$ is a smooth affine curve over $k$. In this case, by \cite[Lemma 2.3]{KrSr}, there are isomorphisms $H^1_{\rm Zar}(X, \sK^M_2) \simeq H^1_{\rm Zar}(X, \sK_2) \simeq SK_1(X)$, where $SK_1(X) = \ker (K_1(X) \to \sO^{\times}(X))$. When $k= \mathbb{F}_q$, $SK_1 (X) = 0$ by \cite[Corollary 4.3]{BMS}. When $k= \ov{\mathbb{F}}_q$, there is a finite subfield $k' \subset k$ and a smooth affine $k$-algebra $A'$ of dimension $1$ such that $A \simeq A'\otimes_{k'} k$. This gives $SK_1 (A) = SL(A) / E(A) = \varinjlim_{\ell}  SL(A'_{\ell})/ E(A'_{\ell}) = 0$, where we take the direct limit over all fields $\ell$ such that $k' \subset \ell \subset k, |\ell|< \infty$. Now, $\CH^2 (X,1) = 0$ by Lemma \ref{lem:SK-1}.
\end{proof}

\begin{prop}\label{prop:SK-1-vanish}
Let $k=\mathbb{F}_q$ or $\ov{\mathbb{F}}_q$. Let $X$ be an irreducible curve over $k$. Then, $\CH^{n+1}(X, n) = 0$ for $n \ge 2$. If $X$ is affine, then $\CH^2(X,1)=0$ as well.
\end{prop}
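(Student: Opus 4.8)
The plan is to reduce from the given (possibly singular) curve to a smooth one by a localization argument and then to invoke Lemma~\ref{lem:SK-1-0}. Since the groups $z^{q}(X,\bullet)$ are free on integral closed subschemes of $X\times\square^\bullet$, they are unchanged upon replacing $X$ by $X_{\red}$, so I may assume $X$ is an integral quasi-projective curve. Because $k$ is perfect, the regular (equivalently, smooth) locus $U:=X_{\mathrm{reg}}$ is a dense open smooth curve, and its complement $Z:=X\setminus U$ is a finite set of closed points. I would feed the pair $(X,U,Z)$ into Bloch's localization long exact sequence for higher Chow groups (in the corrected form of \cite{Bl2}), written in dimension indexing
\[
\cdots\to \CH_{r}(U,n+1)\xrightarrow{\partial}\CH_{r}(Z,n)\xrightarrow{\iota_*}\CH_{r}(X,n)\xrightarrow{j^*}\CH_{r}(U,n)\to\cdots,
\]
and recall that for the $0$-dimensional $Z=\coprod_x\Spec k(x)$ one has $\CH_{-n}(Z,n)=\bigoplus_x\CH^{n}(\Spec k(x),n)=\bigoplus_x K^{M}_{n}(k(x))$ by the Nesterenko--Suslin--Totaro isomorphism, exactly as used in Lemma~\ref{lem:SK-1}.

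For $n\ge 2$ this essentially finishes the argument. Taking $r=-n$, so that $\CH_{-n}(X,n)=\CH^{n+1}(X,n)$, the left term $\CH_{-n}(Z,n)=\bigoplus_x K^{M}_{n}(k(x))$ vanishes because every residue field $k(x)$ is finite or equal to $\ov{\F}_q$, while the right term $\CH_{-n}(U,n)=\CH^{n+1}(U,n)$ vanishes by Lemma~\ref{lem:SK-1-0} since $U$ is a smooth curve. Exactness then forces $\CH^{n+1}(X,n)=0$, and no affineness is needed here.

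The case $n=1$ (with $X$ affine) is the genuinely delicate one and is where I expect the main difficulty. Here localization alone is not enough: with $r=-1$ the right term $\CH_{-1}(U,1)=\CH^{2}(U,1)$ still vanishes (now using that $U$ is a smooth \emph{affine} curve, so Lemma~\ref{lem:SK-1-0} applies), which shows $\iota_*$ is surjective; but the left term $\CH_{-1}(Z,1)=\bigoplus_x k(x)^\times$ is nonzero, so this only exhibits $\CH^2(X,1)$ as a quotient of $\bigoplus_{x\in Z} k(x)^\times$. To kill this, I would bring in the normalization $\pi:\tilde X\to X$, a finite birational morphism that is an isomorphism over $U$; since $\pi$ is finite and $X$ is affine, $\tilde X$ is a smooth affine curve, whence $\CH^2(\tilde X,1)=0$ by Lemma~\ref{lem:SK-1-0}. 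The localization sequences for $(\tilde X,\tilde U,\tilde Z)$ and $(X,U,Z)$ fit into a commutative ladder under proper push-forward $\pi_*$, in which the map on the open parts is an isomorphism and the map on the closed parts is the direct sum of the norm maps $k(\tilde x)^\times\to k(x)^\times$. A short diagram chase, using that $\iota_*$ is surjective and that the norm map of finite fields (hence trivially also over $\ov{\F}_q$) is surjective, shows that $\pi_*:\CH^2(\tilde X,1)\to \CH^2(X,1)$ is surjective; as the source vanishes, $\CH^2(X,1)=0$.

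The crux, and the step needing the most care, is the $n=1$ claim that the induced vertical map on the $Z$-terms of the localization ladder is precisely the norm map $\bigoplus_{\tilde x} k(\tilde x)^\times\to \bigoplus_x k(x)^\times$, i.e.\ the compatibility of proper push-forward with the Nesterenko--Suslin--Totaro identification; granting this, surjectivity of the finite-field norm does the rest. I would verify this compatibility by unwinding the explicit description of push-forward on $0$-cycles together with the norm maps appearing in Lemma~\ref{lem:SK-1} (cf.\ the map $\phi_n$ there). An alternative that avoids the ladder would be to represent a class in $\bigoplus_x k(x)^\times$ directly by points $(x,a)\in X\times\square$ and exhibit bounding curves in $X\times\square^2$ pulled back from $\tilde X$, but the functorial localization argument seems cleaner.
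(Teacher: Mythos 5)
Your proposal is correct and is essentially the paper's own argument: the paper likewise forms the morphism of localization triangles induced by the normalization (its Mayer--Vietoris sequence is exactly your ladder spliced together), identifies the terms over the singular points with Milnor $K$-theory via the Nesterenko--Suslin--Totaro isomorphism so that push-forward becomes the norm, and concludes from Lemma~\ref{lem:SK-1-0} together with Steinberg vanishing for $n\ge 2$ and surjectivity of the finite-field norm for $n=1$. Your only deviations are cosmetic --- handling $n\ge 2$ directly by localization without the normalization, and chasing the ladder for $n=1$ instead of extracting the Mayer--Vietoris sequence --- though note that applying Lemma~\ref{lem:SK-1-0} to $U$ at $n=1$ requires the true-but-worth-a-word fact that an open subset of an affine curve is affine, a step the paper sidesteps by applying that lemma to the (affine) normalization $X'$ rather than to the smooth locus.
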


\begin{proof}
We may assume that $X$ is integral. By Lemma \ref{lem:SK-1-0}, the proposition holds when $X$ is smooth. So, suppose $X$ is not smooth. Let $U\subset X$ be the smooth locus of $X$ and let $S= (X\setminus U)_{\rm red}$. Let $f: X' \to X$ be the normalization and let $U':= f^{-1} (U)$. Let $S':= (X' \setminus U')_{\rm red}$. From the morphism of localization exact triangles
\[
\xymatrix{
z^n(S', \bullet) \ar[r] \ar[d] & z^{n+1}(X', \bullet) \ar[r] \ar[d] & z^{n+1}(U', \bullet) \ar[r] \ar[d]^{\simeq} & z^n(S', \bullet)[1] \ar[d] \\
z^n(S, \bullet) \ar[r] & z^{n+1}(X, \bullet) \ar[r] & z^{n+1}(U, \bullet) \ar[r] & z^n(S, \bullet)[1],}  
\]
we obtain the Mayer-Vietoris exact sequence 
\begin{equation}\label{eqn:pushMV}
\CH^{n}(S', n) \overset{(\iota', f_*)}{\to} \CH^{n+1}(X', n) \oplus \CH^{n}(S, n) \overset{f_*+ \iota}{\to}  \CH^{n+1}(X, n) \to \CH^{n}(S', n-1) 
\end{equation}
associated to the push-forward maps, where $\iota$ and $\iota'$ are maps induced by the closed immersions $S \hookrightarrow X$ and $S' \hookrightarrow X'$, respectively. Note that $\dim \ S' = 0$ so that $\CH^{n}(S', n-1) = 0$ by the dimension reason. On the other hand, $S$ is a finite union of reduced closed points so that $\CH^n (S, n) = \bigoplus_{x \in S} \CH^n (k(x), n) \simeq^{\dagger} \bigoplus_{x \in S} K_n ^M (k(x), n) = K_n ^M (\mathcal{O}_S)$, where $\mathcal{O}_S= \prod_{x \in S} k(x)$ and $\dagger$ is the Nesterenko-Suslin--Totaro isomorphism $K^M_n(F) \simeq \CH^n(F, n)$ for any field $F$ (see \cite{Totaro}). Similarly, for $\mathcal{O}_{S'}:= \prod_{x \in S'} k(x)$ we have $\CH^n (S', n) \simeq K_n ^M (\mathcal{O}_{S'})$. The pushforward $\CH^n (S', n) \to \CH^n (S, n)$ corresponds to the norm map $N: K_n ^M (\mathcal{O}_{S'}) \to K_n ^M (\mathcal{O}_S)$. Hence \eqref{eqn:pushMV} yields an exact sequence $K^M_{n}(\sO_{S'}) \overset{N}{\to} K^M_n(\sO_{S}) \to \frac{\CH^{n+1}(X, n)}{f_* \CH^{n+1}(X', n)} \to 0.$ Lemma \ref{lem:SK-1-0} now reduces the problem to show that $N$ is surjective. This is clear if $k= \ov{\mathbb{F}}_q$. If $k= \mathbb{F}_q$ and $n \ge 2$, then $K^M_n(\sO_{S}) = 0$ by Steinberg's theorem (see \cite[Example~1.3]{Milnor}). If $n =1$, then surjectivity of $N$ follows from the following elementary fact: for a finite extension $\mathbb{F}_q \hookrightarrow \ell$, the norm $N: \ell ^{\times} \to \mathbb{F}_q^{\times}$ is surjective.
\end{proof}

\begin{thm}\label{thm:0 vanishing-1}
Let $k= \mathbb{F}_q$ or $\ov{\mathbb{F}}_q$. Let $(X,D)$ be an affine $k$-variety of dimension $r$ with an effective Cartier divisor. Then, $\CH^{n+r} (X|D, n) = 0$ for $r \geq 2$ and $n \geq 1$. 

\end{thm}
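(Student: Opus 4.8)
The plan is to imitate the characteristic-$0$ argument of \thmref{thm:0 vanishing}, but to exploit the much stronger curve vanishing that is now available, namely Proposition \ref{prop:SK-1-vanish}: over $\F_q$ or $\ov{\F}_q$ the group $\CH^{n+1}(C,n)$ vanishes for \emph{every} irreducible curve $C$ in the relevant range, not merely for rational ones. Because of this I am free to push forward from an \emph{arbitrary} closed curve through a given point rather than from a uniruled one, which is exactly what frees the statement from any degree hypothesis on $D$ and from characteristic-$0$ input such as \lemref{lem:uniruled}. First I would make the identification $\CH^{n+r}(X|D,n)=\CH_{-n}(X|D,n)$ and note, as in the proof of \thmref{thm:0 vanishing}, that a $0$-cycle has modulus $D$ precisely when it is disjoint from $D\times\square^n$. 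Hence this group is generated by classes of closed points $z=(x,y)\in(X\setminus D)\times\square^n$, and it suffices to prove $[z]=0$ for each such generator.

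The heart of the matter is a purely algebraic construction of a closed integral affine curve $C\subset X$ with $x\in C$ and $C\cap D=\emptyset$, valid over \emph{any} field and in particular over the finite field $\F_q$, where genericity (Bertini) arguments are unavailable. Write $A=\Gamma(X,\mathcal O_X)$, which is a domain since $X$ is a variety. As $x\notin D$, the ideals $I_D$ and $\fm_x$ are comaximal, so by the Chinese Remainder Theorem there is $h\in A$ mapping to $(1,0)$ under $A\twoheadrightarrow \Gamma(D,\mathcal O_D)\times k(x)$; thus $h$ is a unit on $D$, giving $V(h)\cap D=\emptyset$, while $h(x)=0$, so $x\in V(h)$. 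Since $h\neq 0$, Krull's principal ideal theorem makes $V(h)$ pure of codimension $1$. Choosing an integral component $Y_1\ni x$ and then cutting down successively by nonzero elements of the maximal ideal of $x$ — always remaining inside $V(h)$, hence disjoint from $D$ — I reach after $r-1$ steps an integral affine curve $C\ni x$ with $C\cap D=\emptyset$.

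Finally, the inclusion $\iota:C\hookrightarrow X$ is a closed immersion, hence proper, and $\iota^{-1}(D)=\emptyset$. So \corref{cor:induced map} supplies a push-forward $\iota_*:\CH^{n+1}(C,n)=\CH_{-n}(C,n)\to \CH_{-n}(X|D,n)=\CH^{n+r}(X|D,n)$, and for $z'=(x,y)\in C\times\square^n$ (which satisfies the face condition because it has the same second coordinate $y$ as $z$) the cycle $z'$ maps isomorphically onto $z$, so $\iota_*([z'])=[z]$. By Proposition \ref{prop:SK-1-vanish}, $\CH^{n+1}(C,n)=0$ for $n\geq 2$, and also for $n=1$ since $C$ is affine; as we assume $n\geq 1$, both cases are covered and $[z]=\iota_*([z'])=0$. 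This proves $\CH^{n+r}(X|D,n)=0$.

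I expect the only real obstacle to be the curve construction over a finite field — precisely the point at which the characteristic-$0$ proof invoked uniruledness and generic smoothness. Routing it through the Chinese Remainder Theorem together with Krull's theorem sidesteps all genericity, and everything else is a transcription of the characteristic-$0$ argument with \lemref{lem:localization} (which needs rationality) replaced by Proposition \ref{prop:SK-1-vanish} (which does not). One should double-check that the successive cuts indeed drop the dimension by exactly one and retain a component through $x$, but this is immediate from integrality of $X$ and Krull's theorem.
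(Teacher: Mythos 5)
Your proposal is correct and follows essentially the same route as the paper: reduce to a closed point $z$ with $x=p_X(z)\notin D$, produce an integral affine curve $C\ni x$ with $C\cap D=\emptyset$ using the comaximality of $I_D$ and $\fm_x$ (the paper picks $f\in I$, $g\in\fm_x$ with $(f,g)=A$ and descends to a curve via chains of primes modulo $(g)$, which is the same commutative algebra as your CRT element $h$ plus iterated Krull cuts), and then kill $[z]$ via Corollary \ref{cor:induced map} and Proposition \ref{prop:SK-1-vanish}. The only differences are cosmetic, and your handling of the $n=1$ case (needing $C$ affine) matches the paper's.
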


\begin{proof}
We may assume $X= \Spec (A)$ is connected. Let $I \subset A$ be the ideal of $D$. Let $z \in z^{n+r} (X|D, n)$ be a closed point. Let $x= p_X (z)$, where $p_X: X \times \square^n \to X$ is the projection. Let $\mathfrak{m} \subset A$ be the maximal ideal of $x$. Since $x \not \in D$, we have $\mathfrak{m} + I = A$. Choose $f \in I$ and $g \in \mathfrak{m}$ such that $(f,g) = A$. Let $E$ be an irreducible component of $V(g)$ that contains $x$, such that $E \cap D = \emptyset$. Considering the chains of prime ideals in $\mathfrak{m}$ modulo $(g)$ and using that ${\rm ht} (\mathfrak{m}) \geq 2$ (since $r \geq 2$), we see there is an integral curve $C \subset E$ such that $x \in C$. In particular, $C \cap D \subset E \cap D = \emptyset$. Let $\iota: C \hookrightarrow X$ be the closed immersion. By Corollary \ref{cor:induced map}, we have $\iota_*: \CH^{n+1} (C, n) \to \CH^{n+r} (X|D, n)$ such that $[z] \in {\rm im} (\iota_*)$. But, $\CH^{n+1} (C, n)= 0$ by Proposition \ref{prop:SK-1-vanish}. So ${\rm im} (\iota_*)$ is trivial and hence $[z] = 0$. We conclude that $\CH^{n+r} (X|D, n) = 0$.
\end{proof}

\subsubsection{$0$-cycles on multivariate additive higher Chow groups} 
\begin{thm}\label{thm:0 vanishing-additive}
Let $k$ be any field. Let $n \ge 0,  r \ge 2$ and $\un{m} = (m_1, \cdots , m_r)$, with $m_i \geq 1$. Then, $\CH^{n+r}(\A^r | D_{\un{m}}, n) = 0$.
\end{thm}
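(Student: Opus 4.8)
The plan is to adapt the curve--pushforward strategy of Theorems~\ref{thm:0 vanishing} and~\ref{thm:0 vanishing-1}, exploiting the identity $\mathbb{A}^r \setminus D_{\un{m}} = \mathbb{G}_m^r$ to make it work over an \emph{arbitrary} field. Since $\dim \mathbb{A}^r = r$, the group $\CH^{n+r}(\mathbb{A}^r|D_{\un{m}}, n) = \CH_{-n}(\mathbb{A}^r|D_{\un{m}}, n)$ is generated by classes $[z]$ of closed points $z \subset \mathbb{A}^r \times \square^n$, and --- as recorded in the proof of Theorem~\ref{thm:0 vanishing} --- such a $0$-cycle has modulus $D_{\un{m}}$ exactly when it is disjoint from $D_{\un{m}} \times \square^n$, i.e. when $x := p_{\mathbb{A}^r}(z) \in \mathbb{G}_m^r$. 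This criterion does not see the multiplicities $\un{m}$, so one uniform construction will settle all $\un{m}$ simultaneously; it remains to prove $[z] = 0$ whenever $x \in \mathbb{G}_m^r$.

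The geometric heart, where $r \ge 2$ is essential, is the construction of a suitable curve through $x$. Write $x = (x_1, \dots, x_r)$ with $x_i \in \kappa(x)^\times$, set $L_0 := k(x_1 x_2, x_3, \dots, x_r) \subseteq \kappa(x)$, and let $P_0 \in \mathbb{A}^{r-1}$ be the closed point with coordinates $(x_1 x_2, x_3, \dots, x_r)$ and residue field $L_0$. I would take $C := \phi^{-1}(P_0)$, the fibre of $\phi \colon \mathbb{A}^r \to \mathbb{A}^{r-1}$, $\phi(t) = (t_1 t_2, t_3, \dots, t_r)$. Being the preimage of a closed point, $C$ is closed in $\mathbb{A}^r$; its coordinate ring is $L_0[t_1, t_2]/(t_1 t_2 - x_1 x_2) = L_0[t_1^{\pm 1}]$, so $C \cong \mathbb{G}_{m, L_0}$ is a regular, rational affine curve over $L_0$. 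All $t_i$ are invertible on $C$, hence $C \subset \mathbb{G}_m^r$ and $C \cap D_{\un{m}} = \emptyset$; and $x \in C$ since $\phi(x) = P_0$, the residue field $\kappa(x) = L_0(x_1)$ being recovered at the point $t_1 = x_1$ of $C$.

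This is then fed into the two available tools. The group $\CH^{n+1}(C, n)$ is intrinsic to the scheme $C$, since $C \times_k \square^n_k = C \times_{L_0} \square^n_{L_0}$, so Lemma~\ref{lem:localization}, applied with base field $L_0$ to the regular rational affine curve $C \cong \mathbb{G}_{m, L_0}$, gives $\CH^{n+1}(C, n) = 0$ for every $n \ge 0$. The inclusion $\iota \colon C \hookrightarrow \mathbb{A}^r$ is a closed immersion with $\iota^{-1}(D_{\un{m}}) = \emptyset$, so Corollary~\ref{cor:induced map} yields $\iota_* \colon \CH^{n+1}(C, n) \to \CH^{n+r}(\mathbb{A}^r|D_{\un{m}}, n)$. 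Regarding $z$ as the point $z_C \in C \times \square^n$ (legitimate since $x \in C$, and $z_C$ still meets all faces properly), we obtain $[z] = \iota_*[z_C] = \iota_*(0) = 0$, as desired.

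The main obstacle is precisely this curve construction over a general field: if $\kappa(x)/k$ is a large or inseparable extension there may be \emph{no} curve through $x$ with rational function field $k(t)$, since every closed point of such a curve has a simple residue field over $k$. The device that removes the obstruction is to use the hyperbola $t_1 t_2 = x_1 x_2$ --- available only because $r \ge 2$ --- which is automatically smooth and rational over the \emph{smaller} field of constants $L_0 = k(x_1 x_2, x_3, \dots, x_r)$, and to read off the vanishing of $\CH^{n+1}(C, n)$ from Lemma~\ref{lem:localization} applied over $L_0$. This bypasses the normalization and norm-surjectivity considerations that confined Theorem~\ref{thm:0 vanishing-1} to finite fields, and explains why no hypothesis on $k$ is needed. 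The only remaining points to verify are routine: that $C$ is genuinely closed in $\mathbb{A}^r$ (immediate, being a fibre over a closed point) and that $z_C$ is an admissible cycle on $C$.
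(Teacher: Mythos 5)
Your proof is correct, and its skeleton is the same as the paper's: both observe that the group is generated by classes of closed points $z$ whose projection $x = p_{\A^r}(z)$ lies in $\mathbb{G}_m^r = \A^r \setminus D_{\un{m}}$ (so the multiplicities $m_i$ play no role), both push forward from a hyperbola of the form $t_1t_2 = \mathrm{const}$ --- this is exactly the curve $C_1 \times (c_3, \cdots, c_r)$ appearing in the paper --- and both conclude via Corollary~\ref{cor:induced map} together with Lemma~\ref{lem:localization}. The one genuine divergence is the handling of closed points with nontrivial residue field: the paper first reduces to $k$-rational points by base-changing to $\ell = k(z)$, lifting $z$ to an $\ell$-rational point $w$ with $\pi_*([w]) = [z]$ under the proper push-forward of Proposition~\ref{prop:PFF}, and then running the construction over $\ell$; you instead stay over $k$ and take the scheme-theoretic fiber of $(t_1, \cdots, t_r) \mapsto (t_1t_2, t_3, \cdots, t_r)$ through $x$, which is $\mathbb{G}_{m, L_0}$ over the residue field $L_0$ of the image point, and you apply Lemma~\ref{lem:localization} over the base $L_0$ --- legitimately, since as you note the cycle complex of $C$ is intrinsic to the scheme $C \times \square^n$. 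Your variant buys a small economy: it needs no base change, no admissibility check for the lifted point $w$, and no verification of $\pi_*([w]) = [z]$, all of which the paper leaves implicit; the paper's reduction is the more standard, reusable trick, and it is what lets the paper cite the argument of Theorem~\ref{thm:0 vanishing} essentially verbatim after the reduction. In both arguments the decisive inputs are identical: $r \ge 2$ makes the hyperbola available, and $\CH^{n+1}$ of a regular rational affine curve vanishes by localization and homotopy invariance.
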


\begin{proof}Let $z \in \mathbb{A}^r \times \square^n$ be a closed point with modulus $D_{\un{m}}$. Let $\ell = k (z)$, which is finite over $k$. Then, there is an $\ell$-rational closed point $w \in \mathbb{A}_{\ell} ^r \times_{\ell} \square_{\ell} ^n$ such that $\pi_* ([w]) = [z]$, where $\pi: (\mathbb{A}_{\ell} ^r , \pi ^* (D_{\un{m}}) ) \to ( \mathbb{A}_k ^n, D)$ is the base change map and $\pi_*$ is as in Proposition \ref{prop:PFF}. Here $\pi^* (D_{\un{m}})= \{ t_1 ^{m_1} \cdots t_r ^{m_r} = 0 \}$ on $\mathbb{A}_{\ell} ^r$ as well. So, we are reduced to showing that $[w] = 0$ in $\CH^{n+r} (\mathbb{A}^r _{\ell} | D_{\un{m}} , n)$. In other words, we may assume $z$ is $k$-rational. In particular, $x= \pi_{\mathbb{A}^r} (z)$ is $k$-rational for the projection $p_{\mathbb{A}^r} : \mathbb{A}^r \times \square^n \to \mathbb{A}^r$.

The rest of the proof is now a copycat of the argument of Theorem \ref{thm:0 vanishing}, if we can show that given any $k$-rational point $x \in \mathbb{A}^r \setminus D_{\un{m}}$, there is an integral rational affine curve $C \hookrightarrow \mathbb{A}^r$ such that $x \in C$ and $C \cap D_{\un{m}} = \emptyset$. If $x = (c_1, \cdots, c_r)$ with $c_i \in k^{\times}$, let $C_1 \subset \mathbb{A}^2$ be the curve given by the polynomial $f= t_1 t_2 - c_1 c_2 \in k[t_1, t_2]$ and set $C = C_1 \times (c_3, \cdots, c_r)$. This $C$ satisfies the desired properties. So, we are done by Lemma \ref{lem:localization}.
\end{proof}

\begin{thm}\label{thm:0 vanishing-2}Let $k$ be any field.
Let $n \geq 0, r \geq 2$ and $\un{m}= (m_1, m_2)$, with $m_i \geq 1$. Let $X$ be an equidimensional $k$-scheme of dimension $r-2$ with an effective Cartier divisor $D$.  Then, $\CH^{n+r} (X \times \mathbb{A}^2| D \times \mathbb{A}^2 + X \times D_{\un{m}}, n) = 0$.
\end{thm}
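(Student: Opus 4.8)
The plan is to follow the argument of \thmref{thm:0 vanishing-additive} almost verbatim, the only genuinely new ingredient being the extra divisor $D \times \A^2$. Write $E := D \times \A^2 + X \times D_{\un{m}}$. As in the previous $0$-cycle vanishing theorems, a generator of $z^{n+r}(X \times \A^2 \mid E, n)$ is a closed point $z \in (X \times \A^2) \times \square^n$; its face condition (proper intersection of a $0$-dimensional cycle with every positive-codimension face) forces $z$ to be disjoint from all proper faces, so $\partial z = 0$ automatically, and its modulus condition is simply that $p_{X\times\A^2}(z)$ lies off $E$. Hence it suffices to show that the class $[z]$ of each such generator vanishes. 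First I would reduce to the case where $z$ is $k$-rational: passing to $\ell = k(z)$, which is finite over $k$, I base change along the finite (hence proper) map $\pi \colon (X_\ell \times \A^2, \pi^*E) \to (X \times \A^2, E)$, note that $\pi^*E$ is the analogous divisor $D_\ell \times \A^2 + X_\ell \times D_{\un{m}}$, and use the push-forward of \propref{prop:PFF} to write $[z] = \pi_*([w])$ for an $\ell$-rational point $w$ lying over $z$, exactly as in \thmref{thm:0 vanishing-additive}. After this reduction $x := p_{X\times\A^2}(z) = (x_X, c_1, c_2)$ is $k$-rational, and the condition $x \notin E$ unwinds to $x_X \in X \setminus D$ together with $c_1, c_2 \in k^\times$.

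The key geometric step is to produce a curve through $x$ that avoids all of $E$, and here I would freeze the $X$-coordinate at $x_X$ and move only in the $\A^2$-directions. Take $C_1 = \{\, t_1 t_2 = c_1 c_2 \,\} \subset \A^2$, which is isomorphic to $\G_m$ and hence a regular rational affine curve passing through $(c_1, c_2)$, and set $C := \{x_X\} \times C_1$. Since $x_X$ is $k$-rational, the inclusion $\{x_X\} \inj X$ is a closed immersion, so $\iota \colon C \inj X \times \A^2$ is a closed immersion with $C \cong C_1$. I then verify $C \cap E = \emptyset$: because $x_X \notin D$ the curve lies inside $(X \setminus D) \times \A^2$ and so misses $D \times \A^2$, while $t_1 t_2 = c_1 c_2 \neq 0$ on $C_1$ gives $C_1 \cap D_{\un{m}} = \emptyset$, so $C$ misses $X \times D_{\un{m}}$ as well.

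Finally, since $\iota^{-1}(E) = \emptyset$, \corref{cor:induced map} furnishes a push-forward $\iota_* \colon \CH^{n+1}(C, n) \to \CH^{n+r}(X \times \A^2 \mid E, n)$, and viewing $z$ as a point of $C \times \square^n$ gives $[z] \in \im(\iota_*)$. But $C$ is a regular rational affine curve, so $\CH^{n+1}(C, n) = 0$ by \lemref{lem:localization}; therefore $[z] = \iota_*([z]) = 0$. As $z$ was an arbitrary generator, the group vanishes.

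No step here is genuinely hard — the argument is essentially a copy of \thmref{thm:0 vanishing-additive}. The one point requiring care, and the only place where the structure of the divisor enters, is the treatment of the component $D \times \A^2$: the idea that neutralizes it is precisely to move only in the $\A^2$-directions through the fixed rational point $x_X \notin D$, which makes $C$ automatically disjoint from $D \times \A^2$ and reduces the modulus bookkeeping to the two-variable situation already handled. The only other thing I would double-check is that the rationality reduction is compatible with $X$ being an arbitrary (possibly non-reduced or reducible) equidimensional $k$-scheme, but this uses only that finite base change preserves dimension and carries effective Cartier divisors to effective Cartier divisors.
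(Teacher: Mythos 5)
Your proposal is correct and matches the paper's own proof essentially verbatim: the paper likewise reduces to $k$-rational points as in Theorem~\ref{thm:0 vanishing-additive} and then uses the curve $C = z' \times C_1$ with $C_1 = \{t_1 t_2 = c_1 c_2\} \subset \A^2$ through $z = (z', c_1, c_2)$, concluding via Corollary~\ref{cor:induced map} and Lemma~\ref{lem:localization}. Your observation that freezing the $X$-coordinate at a rational point off $D$ is what neutralizes the component $D \times \A^2$ is exactly the point of the paper's construction.
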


Its proof is almost identical to that of Theorem \ref{thm:0 vanishing-additive}: after reducing to the case of $k$-rational points, if $z= (z', c_1, c_2) \in (X \times \mathbb{A}^2)(k)$ is away from $D \times \mathbb{A}^2 + X \times D_{\un{m}}$, where $z' \in (X\setminus D)(k)$, then we have $C = z' \times C_1 \ni z$, where $C_1 = \{ t_1 t_2 = c_1 c_2 \} \subset \mathbb{A}^2$.

\subsection{Codimension $1$ cycles}\label{section:codim 1}
Let $r \geq 2$, $n \geq 0$, and $\un{m} = (1, \cdots, 1)$. We now consider $\CH^1 (\mathbb{A}^r |D_{\un{m}}, n)$. For simplicity, we identify $(\square, \{ \infty, 0 \})$ with $\square_{\psi}:=(\mathbb{A}^1, \{ 0, 1 \})$ via the automorphism $\psi: \mathbb{P}^1 \to \mathbb{P}^1$, $y \mapsto 1/ (1-y)$. We  use $\partial = \sum_{i=1} ^n (-1)^i (\partial_i ^0 - \partial_i ^{1})$ as the boundary operator, where $\partial_i ^\epsilon$ is given by $\{ y_i= \epsilon\}$. 

\begin{lem}\label{lem:divisor present}Let $Z \in z^1  (\mathbb{A}^r|D_{\un{m}}, n)$ be an irreducible admissible cycle. Then, $Z= V(f)$, where  $f = 1 - t_1 \cdots t_r g $ for some $g \in k[t_1, \cdots, t_r][y_1, \cdots, y_n]$. 
\end{lem}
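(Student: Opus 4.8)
The plan is to exploit that the ambient space $\mathbb{A}^r \times \square^n \cong \mathbb{A}^{r+n} = \Spec k[t_1, \ldots, t_r, y_1, \ldots, y_n]$ is a unique factorization domain, so that the integral codimension-$1$ cycle $Z$ is $V(f)$ for an irreducible polynomial $f$, unique up to a nonzero scalar. What remains is to extract the shape of $f$ from the modulus condition alone; the face condition will not be needed. Since $Z$ is admissible it is not contained in $D_{\un{m}} = \{t_1 \cdots t_r = 0\}$, so $f$ is not an associate of any $t_j$; in particular $t_j \nmid f$ for each $j$ (otherwise irreducibility would force $Z = \{t_j = 0\} \subset D_{\un{m}}$).

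First I would unwind the modulus condition in the $\psi$-coordinates. Here $D_{\un{m}} \times \ov{\square}^n = \sum_{j=1}^r \{t_j = 0\}$ and $F_n^1 = \sum_{i=1}^n \{y_i = \infty\}$, so by \defref{defn:modulus} the condition reads $\sum_j \nu_Z^*\{t_j = 0\} \le \sum_i \nu_Z^*\{y_i = \infty\}$ on $\ov{Z}^N$. The consequence I want is that each effective divisor $\nu_Z^*\{t_j = 0\}$ is supported over the boundary $\mathbb{A}^r \times (\ov{\square}^n \setminus \square^n)$: at any prime divisor $E$ of $\ov{Z}^N$ lying over the interior $\mathbb{A}^r \times \mathbb{A}^n$ (all $y_i$ finite) the right-hand side has multiplicity $0$, forcing $\text{mult}_E\, \nu_Z^*\{t_j = 0\} = 0$.

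The main step is to deduce that $f|_{t_j = 0}$ is a nonzero constant for each $j$. Suppose not; then $V(f|_{t_j=0})$ is a nonempty hypersurface in the interior slice $\{t_j = 0\} \cong \mathbb{A}^{r+n-1}$, hence a codimension-$1$ subvariety of $Z$ contained in $\{t_j = 0\}$ and lying entirely in the interior. Over any component $W_0$ there is a prime divisor $E_0$ of $\ov{Z}^N$; along $E_0$ the pulled-back local equation $t_j$ vanishes, so $\text{mult}_{E_0}\, \nu_Z^*\{t_j=0\} \ge 1$, while all $y_i$ stay finite, so every $\nu_Z^*\{y_i=\infty\}$ vanishes along $E_0$. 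This contradicts the previous paragraph. That $f|_{t_j=0} \not\equiv 0$ is exactly $t_j \nmid f$, already noted.

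Finally I would assemble the conclusion. Since $f|_{t_j=0}$ is constant it equals $f(0,\ldots,0,y_1,\ldots,y_n)$, independent of $j$; call this nonzero constant $c$. Then $t_j \mid (f-c)$ for every $j$, and as the $t_j$ are pairwise coprime in the UFD, $t_1\cdots t_r \mid (f-c)$, say $f - c = t_1\cdots t_r h$. Rescaling $f$ by $c^{-1}$ (which does not change $Z$) and setting $g = -h/c$ gives $f = 1 - t_1\cdots t_r g$, as desired. The step I expect to require the most care is the divisor-theoretic one on $\ov{Z}^N$: justifying that a codimension-$1$ interior component of $Z \cap \{t_j = 0\}$ genuinely contributes positive multiplicity to $\nu_Z^*\{t_j=0\}$ while the boundary divisors $\nu_Z^*\{y_i = \infty\}$ vanish there, so that the support comparison is read off correctly after normalization. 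Everything else is formal manipulation in the polynomial ring.
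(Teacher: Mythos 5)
Your proof is correct and follows essentially the same route as the paper: present $Z = V(f)$ using the UFD property of $k[t_1,\cdots,t_r][y_1,\cdots,y_n]$, show the modulus condition forces $f|_{t_j=0}$ to be a nonzero constant for each $j$, and conclude by coprimality of the $t_j$ after rescaling. The only difference is that you justify the disjointness $Z \cap V(t_j) = \emptyset$ by a careful divisor-multiplicity argument on $\ov{Z}^N$ (every component of $Z \cap V(t_j)$ has codimension $1$ in $Z$ by Krull, so your interior-divisor contradiction covers everything), whereas the paper asserts this consequence of the modulus condition in one line and then reads off the unit constant term from $(f,t_j)=(1)$.
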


Note that we don't claim that every cycle of the form $V(f)$, with $f$ as above, is admissible, unless $n=0$ for which the above is sufficient.

\begin{proof}Since $k[t_1, \cdots, t_r][y_1, \cdots, y_n]$ is a UFD, hence normal, by Krull's principal ideal theorem (e.g. \cite[I-Theorem 1.11A]{Hartshorne}) there exists a polynomial $f$ in the ring with $Z = V(f)$. However, the modulus condition of $Z$ requires that $Z \cap V (t_i ) = \emptyset$ for $1 \leq i \leq r$. For $i=1$, it means that, if we write $f= p_0 + p_1 t_1 + \cdots + p_N t_1 ^N$, where $p_j \in k[ t_2, \cdots, t_r][y_1, \cdots, y_n]$, then $(f, t_1) = (1)$. Hence $p_0$ must be a nonzero constant. By scaling, we may assume $p_0 = 1$. Thus, $f-1$ is divisible by $t_1$. By repeating this argument for all $1 \leq i \leq r$, we see that $f-1$ is divisible by $t_1 \cdots t_r$.
\end{proof}

\begin{thm}\label{thm:codim 1 n=0}
Let $\un{m}= (1, \cdots, 1)$. Then, $\CH^1 (\mathbb{A}^r| D_{\un{m}}, 0) = 0$. 
\end{thm}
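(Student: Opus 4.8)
The plan is to realize every admissible $0$-cycle explicitly as a boundary, using the normal form for its defining equation. By Lemma~\ref{lem:divisor present} applied with $n=0$, every irreducible cycle $Z\in z^1(\A^r|D_{\un{m}},0)$ has the form $Z=V(f)$ with $f=1-t_1\cdots t_r\,g$ for some $g\in k[t_1,\dots,t_r]$, and (by the remark following that lemma) such cycles are exactly the admissible ones in degree $0$; since $z^1(\A^r|D_{\un{m}},0)$ is freely generated by these $Z$, it suffices by $\Z$-linearity to show each one bounds. I would scale $g$ by the cube coordinate and set
\[
W:=V\bigl(1-t_1\cdots t_r\,g\,y_1\bigr)\subset \A^r\times\square^1,
\]
where $y_1$ is the coordinate on $\square_{\psi}=(\A^1,\{0,1\})$. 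Restricting at the two faces gives $\partial_1^1 W=V(1-t_1\cdots t_r\,g)=Z$ and $\partial_1^0 W=V(1)=\emptyset$, so with the convention $\partial=\partial_1^1-\partial_1^0$ one gets $\partial W=Z$ as soon as $W$ is shown to be admissible; this will finish the proof.

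The face condition for $W$ is immediate: the polynomial $1-t_1\cdots t_r\,g\,y_1$ is linear in $y_1$ with constant term $1$, hence irreducible, and $W$ is the graph of the rational function $y_1=(t_1\cdots t_r\,g)^{-1}$, so the projection $W\to\A^r$ is birational; moreover $W$ meets $\{y_1=0\}$ in the empty set and meets $\{y_1=1\}$ in the hypersurface $V(f)$, both properly. The substantive point is the modulus condition. Let $\ov{W}\subset \A^r\times\ov{\square}^1=\A^r\times\P^1$ be the closure and $\nu:\ov{W}^N\to \A^r\times\P^1$ the normalization followed by the inclusion; I must verify $\nu^*(D_{\un{m}}\times\P^1)\le\nu^*(\A^r\times F_1^1)$, where $F_1^1=\{y_1=\infty\}$. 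On $W$ itself the defining equation forces $t_1\cdots t_r\,y_1\,g=1$, so $t_1\cdots t_r$ is a nonvanishing regular function there; hence $\nu^*(D_{\un{m}}\times\P^1)$ is supported entirely over $y_1=\infty$, and it suffices to compare the two divisors near $\infty$, where $F_1^1$ has local equation $1/y_1$. The relation $1/y_1=t_1\cdots t_r\,g$ holds as an identity of rational functions on $\ov{W}^N$, so at every codimension-one point (discrete valuation) $v$ of $\ov{W}^N$,
\[
v(1/y_1)=v(t_1\cdots t_r)+v(g).
\]
Since $g$ is a polynomial in the $t_i$, its pullback along the morphism $\ov{W}^N\to\A^r$ is a global regular function, whence $v(g)\ge 0$; therefore $v(t_1\cdots t_r)\le v(1/y_1)$, which is precisely the required inequality. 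Thus $W$ has modulus $D_{\un{m}}$, so $W\in z^1(\A^r|D_{\un{m}},1)$.

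Combining these, $W$ is admissible and $\partial W=Z$; because $Z$ sits in degree $0$, where there are no degenerate cycles, its class in the nondegenerate complex is a genuine boundary, and hence $[Z]=0$ and $\CH^1(\A^r|D_{\un{m}},0)=0$. The only delicate step is the modulus verification, and the structural input that makes it go through is Lemma~\ref{lem:divisor present}: it forces the defining equation into the shape $1-t_1\cdots t_r\,g$, so that scaling produces exactly the relation $1/y_1=t_1\cdots t_r\,g$ in which the correction factor $g$ is globally regular. Without this normal form one could not guarantee $v(g)\ge 0$ over $y_1=\infty$, which is the one place the inequality could otherwise fail.
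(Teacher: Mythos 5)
Your proof is correct and takes essentially the same route as the paper: both invoke Lemma~\ref{lem:divisor present} to put $Z$ in the normal form $V(1-t_1\cdots t_r\,g)$ and then exhibit it as the boundary of $W=V(1-t_1\cdots t_r\,g\,y_1)$, with $\partial_1^0 W=\emptyset$ and $\partial_1^1 W=Z$. The only cosmetic difference is in the modulus verification, where the paper passes to homogeneous coordinates and reads off $\{t_1\cdots t_r=0\}\le\{s_0=0\}$ on the normalization of the closure, while you phrase the identical computation via discrete valuations at codimension-one points of $\ov{W}^N$ using $v(1/y_1)=v(t_1\cdots t_r)+v(g)$ and $v(g)\ge 0$.
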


\begin{proof}By Lemma \ref{lem:divisor present}, every irreducible $Z \in z^1 (\mathbb{A}^r|D_{\un{m}}, 0)$ is given by an equation of the form $f(t_1, \cdots, t_r) = 1- t_1 \cdots t_r g$ for some $g \in k[t_1, \cdots, t_r]$. Consider $W \subset \mathbb{A}^r \times \square_{\psi}$ given by the polynomial $h(t_1, \cdots, t_r, y_1) := 1 - t_1 \cdots t_r g y_1$. Writing $y_1 = s_1/s_0$, where $[s_0,s_1] \in \ov{\square}_{\psi}= \mathbb{P}^1$ are the homogeneous coordinates, the Zariski closure $\ov{W}$ in $\mathbb{A}^r \times \ov{\square}_{\psi}$ is given by $s_0 = t_1 \cdots t_r g s_1$. Hence, on the normalization of $\ov{W}$, we have $\{ t_1 \cdots t_r = 0 \} \leq \{ s_0 = 0 \} = \{ y_1 = \infty \}$. So, $W$ has modulus $\un{m}$. On the other hand, $h (t_1, \cdots, t_r, 0) = 1$ and $h ( t_1, \cdots, t_r, 1) = f(t_1, \cdots, t_r)$ so that $\partial _1 ^0 W = 0$ and $\partial _1 ^1 W = Z$.  In particular, the intersections of $W$ with faces is proper and $\partial (W) = Z$. Hence $[Z] = 0$ in $\CH^1 (\mathbb{A}^r|D_{\un{m}}, 0)$. 
\end{proof}

\begin{lem}\label{lem:y degree bound}Let $\un{m} = (1, \cdots, 1)$, and $n \geq 1$. Let $W \in z^1 (\mathbb{A}^r|D_{\un{m}}, n)$ be an irreducible admissible cycle. Suppose it is given by a polynomial $h \in k[t_1, \cdots, t_r,][y_1,  \cdots y_n]$. Then, for each $1 \leq i \leq n$, $\deg _{y_i} (h) \leq 1$. 
\end{lem}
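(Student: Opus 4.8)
The plan is to read the degree bound directly off the modulus inequality on the normalized closure of $W$. Fix the index, say $i=1$. By \lemref{lem:divisor present} I may write $h = 1 - t_1\cdots t_r\, g$ with $g = \sum_{k=0}^{d} g_k y_1^k$, $g_k \in k[t_1,\dots,t_r][y_2,\dots,y_n]$ and $g_d \ne 0$, so that $d = \deg_{y_1}(h) = \deg_{y_1}(g)$; the goal is $d \le 1$. Write $T = t_1\cdots t_r$ and pass to the chart $w = 1/y_1$ of $\ov\square$ near $\{y_1 = \infty\}$. Clearing denominators, the closure $\ov W$ is cut out there by $\tilde h = w^d - T\,G$, where $G = g_d + g_{d-1}w + \cdots + g_0 w^d$; in particular the identity $w^d = T\,G$ holds on $\ov W^N$.

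First I would record the modulus condition in divisorial form: since $W$ has modulus $D_{\un{m}}$ relative to $F_n^1$, on $\ov W^N$ one has $\mathrm{div}_0(T) \le \sum_{i=1}^n \mathrm{div}_\infty(y_i)$, where $\mathrm{div}_0$ and $\mathrm{div}_\infty$ denote the zero and polar divisors (the $t_j$ are regular, hence contribute no poles, and $\{y_i = \infty\}$ is the reduced modulus divisor after the $\psi$-coordinate change). Next I would produce a convenient prime divisor. For each $j$ the locus $\overline{\{t_j = 0,\ w = 0\}}$ is irreducible of dimension $\dim\ov W - 1$, with generic point at which $t_{j'}$ ($j'\ne j$) and $y_2,\dots,y_n$ are generic, and it lies in $\ov W$ since $\tilde h$ vanishes there; hence it is covered by a prime divisor $E_j$ of $\ov W^N$ lying over $\{y_1=\infty\}$ but over none of the $\{y_i = \infty\}$ for $i\ge 2$.

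The core computation is then local at $E_j$. From $w^d = T\,G$ I get $d\,v_{E_j}(w) = v_{E_j}(T) + v_{E_j}(G)$; since $t_{j'}$ ($j'\ne j$) and the $y_i$ ($i\ge 2$) are units along $E_j$, one has $v_{E_j}(T) = v_{E_j}(t_j)\ge 1$, and the modulus inequality localizes to $v_{E_j}(t_j)\le v_{E_j}(w)$. If $t_j \nmid g_d$, then $g_d$ is a unit along $E_j$, so $v_{E_j}(G)=0$ and $d\,v_{E_j}(w) = v_{E_j}(t_j)\le v_{E_j}(w)$; as $v_{E_j}(w)\ge 1$ this forces $d\le 1$. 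Thus the bound is immediate as soon as some $t_j$ fails to divide the leading coefficient $g_d$, and by symmetry the same argument handles every variable $y_i$.

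The hard part will be the degenerate case in which $t_j \mid g_d$ for \emph{every} $j$, i.e.\ $T \mid g_d$: then $v_{E_j}(G)>0$ and the computation only yields $(d-1)\,v_{E_j}(w)\le v_{E_j}(G)$, which is too weak. I expect this to be the crux. The line of attack I would pursue is to bring in the primality of $h$ together with the full strength of the modulus inequality at the remaining boundary divisors: when $T\mid g_d$ the relation $w^d = T\,G$ forces $\ov W$ to be highly singular along each $E_j$ (all partials of $\tilde h$ vanish) and $\ov W\cap\{t_j=0\}=\{w=0\}$ set-theoretically with multiplicity $d$, so one must track $v_{E_j}(G)$ through the lower-order coefficients $g_{d-1},\dots,g_0$ and also test the modulus at the divisors sitting over $\{g_d=0\}$, combining these inequalities to contradict $\deg_{y_1}(h)\ge 2$ for a \emph{prime} $h$. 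Making this final combinatorial estimate on the valuations precise — and verifying that it really is forced by the modulus condition in the present normalization — is where the genuine difficulty lies.
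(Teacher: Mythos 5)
Your first three steps are correct, and they already recover everything the paper's own proof actually establishes. The paper homogenizes $h$ to write the closure as $s_{1,0}^{d_1}\cdots s_{n,0}^{d_n} = t_1\cdots t_r\,\tilde g$ and concludes from the resulting inequality $\sum_{j}\{t_j=0\} \le \sum_i d_i\{y_i=\infty\}$ on $\ov{W}^N$ that any $d_i>1$ ``violates the modulus condition.'' That inference is a non sequitur: this inequality points in the same direction as, and is weaker than, the modulus condition $\sum_j\{t_j=0\}\le\sum_i\{y_i=\infty\}$, so by itself it contradicts nothing. What the homogenized equation really yields is the equality $\sum_i d_i\,\nu^*\{y_i=\infty\} = \nu^*\{t_1\cdots t_r=0\} + \operatorname{div}(\nu^*\tilde g)$, and a contradiction with $d\ge 2$ emerges only along a divisor at infinity where some $t_j$ vanishes but $\nu^*\tilde g$ does not --- which is exactly your condition $v_{E_j}(G)=0$, i.e.\ $t_j\nmid g_d$. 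So your non-degenerate case is, in effect, the paper's one-line proof done correctly, and you have isolated precisely the term ($\operatorname{div}(\nu^*\tilde g)$, your $v_{E_j}(G)$) that the paper silently discards.

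The degenerate case $T\mid g_d$ that you flag as the crux cannot be closed, because the lemma fails there. Take $r=2$, $n=1$, ${\rm char}\,k\ne 2$, $c\in k^{\times}$, and $h=1-t_1t_2\,(t_1t_2y_1^2+y_1+c)$, so that $g_2=T$. Then $h$ is irreducible (its discriminant as a quadratic in $y_1$ is $t_1^2t_2^2(5-4ct_1t_2)$, which is not a square in $k(t_1,t_2)$, and $h$ is primitive), and it meets the faces properly since $h|_{y_1=0}=1-ct_1t_2$ and $h|_{y_1=1}=1-t_1t_2(t_1t_2+1+c)$ are nonzero. For the modulus, in the chart $w=1/y_1$ the closure is $w^2(1-ct_1t_2)=t_1t_2(w+t_1t_2)$. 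Since $h\equiv 1 \bmod (t_j)$, every prime divisor of $\ov{W}^N$ on which $t_1t_2$ vanishes lies over $\{w=0\}\cap\ov{W}=\{w=t_1=0\}\cup\{w=t_2=0\}$; at any prime divisor $E'$ over the generic point of $\{w=t_1=0\}$ the elements $t_2$ and $1-ct_1t_2$ are units, and the displayed relation forces $v_{E'}(w)=v_{E'}(t_1)$ (if $v(w)<v(t_1)$ it reads $2v(w)=v(t_1)+v(w)$, if $v(w)>v(t_1)$ it reads $2v(w)=2v(t_1)$, both contradictions). Hence $v_{E'}(t_1)+v_{E'}(t_2)=v_{E'}(w)$, so the modulus condition holds with equality, and symmetrically over $\{w=t_2=0\}$. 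Thus $V(h)\in z^1(\mathbb{A}^2|D_{(1,1)},1)$ is an irreducible admissible cycle with $\deg_{y_1}h=2$, contradicting the statement. Your instinct that the genuine difficulty lies in this case was right, but no valuation bookkeeping will rescue it: the statement itself is false in the degenerate case, the paper's proof is fallacious at exactly the step you identified, and the downstream use of the lemma (the multilinearity of $g_{\un{m}}$ invoked in Lemma~\ref{lem:rho recip}) is correspondingly unsupported without additional hypotheses.
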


\begin{proof}Write $y_i = s_{i,1}/ s_{i,0}$, where $[s_{i,0},s_{i,1}] \in \mathbb{P}^1$ are the homogeneous coordinates of the $i$-th $\ov{\square}$ in $\ov{\square}^n_{\psi}$. After scaling, we may assume $h(t_1, \cdots, t_r, y_1, \cdots, y_n) = 1 - t_1 \cdots t_r g_{\un{m}} + \sum_{\un{i}\not = \un{m}} t^{\un{i}} g_{\un{i}}$, where $g_{\un{m}}, g_{\un{i}} \in k[y_1, \cdots, y_n]$. Let $d_i = \deg_{y_i} (h)$. In terms of the homogeneous coordinates of the space $\mathbb{A}^r \times \ov{\square}^n_{\psi}$, the equation $0 = h$ of the closure $\ov{W}$ can be expressed as $s_{1,0}^{d_1} \cdots s_{n,0} ^{d_n} = t_1 \cdots t_r  \tilde{g},$ for some polynomial $\tilde{g} \in k[ \{ t_\ell,  s_{i, j} \} | \ 1 \leq i \leq n, j= 0, 1, 1 \leq \ell \leq r ]$. Thus, on the normalization of $\ov{W}$, we have $[ \{ t_1 \cdots t_r = 0 \}] \leq [\{ s_{1,0} ^{d_1} \cdots s_{n,0} ^{d_n} \}]$, which is equivalent to $\sum_{i=1}  ^r \{ t_i = 0 \} \leq \sum_{i=1} ^n d_i \{ s_{i,0} = 0 \} = \sum_{i=1} ^n d_i \{ y_i = \infty \}.$ But, if any one of $d_{i} > 1$, it violates the modulus condition. Thus, we have $d_i \leq 1$ for all $i$.
\end{proof}

For $\un{m}= (1, \cdots, 1)$, consider the homomorphism $ \rho: z^1 (\mathbb{A}^r |D_{\un{m}}, 1) \to k$ given as follows. Let $Z \in z^1 (\mathbb{A}^r | D_{\un{m}}, 1)$ be irreducible. We have $Z= V(f)$ for some $f \in k[t_1, \cdots, t_r][y_1, \cdots y_n]$ as in Lemma \ref{lem:divisor present}. Define $\rho$ by $\rho (Z):= {\rm res}_{y_1 = \infty} Ev_{(t_1 = \cdots = t_r = 0)} \left( \frac{ f(0, \cdots, 0, y_1) - f}{ f(0, \cdots, 0, y_1) t_1 \cdots t_r} \right).$ We want to show that for the boundary map $\partial: z^1 (\mathbb{A}^r|D_{\un{m}}, 2) \to z^1 (\mathbb{A}^r|D_{\un{m}}, 1)$, the composition $\rho \circ \partial = 0$. By Lemma \ref{lem:divisor present}, each irreducible cycle $W= V(h) \in z^1 (\mathbb{A}^r|D_{\un{m}}, 2)$ has the form $ h(t_1, \cdots, t_r, y_1, y_2) = 1 - t_1 \cdots t_r g_{\un{m}} + \sum_{\un{i}\not = \un{m}} t^{\un{i}} g_{\un{i}},$ where $\un{i} = (i_1, \cdots, i_r)$ with all $i_j \geq 1$, $t^{\un{i}} := t_1 ^{i_1} \cdots t_r ^{i_r}$ and $g_{\un{m}}, g_{\un{i}} \in k [y_1, y_2].$ Let $\hslash = \sum_{\un{i} \not = \un{m}} t^{\un{i}} g_{\un{i}}$, which is the higher order terms. For each $i=1, 2$ and $\epsilon = 0, 1$, we have $\partial _i ^{\epsilon} W = V( h|_{y_i = \epsilon})$. But, we have $Ev_{(t_1 = \cdots = t_r = 0)} \left( \frac{ 1 -  h(y_i = \epsilon)}{ t_1 \cdots t_r} \right) = Ev_{(t_1 = \cdots = t_r = 0)} \left( \frac{  t_1 \cdots t_r g_{\un{m}}|_{y_i = \epsilon} - \hslash |_{y_i = \epsilon}}{ t_1 \cdots y_r} \right)$ $ = Ev_{(t_1 = \cdots = t_r = 0)} ( g_{\un{m}}|_{y_i = \epsilon})$ so that, it is enough to consider the case when $h$ is replaced by $1- t_1 \cdots t_r g_{\un{m}}$, i.e., without the higher order terms $\hslash$. Now, that $\rho \circ \partial = 0$ can be checked readily in the following:

\begin{lem}\label{lem:rho recip}
Let $\un{m} = (1, \cdots, 1)$. Let $h = 1 - t_1 \cdots t_r g_{\un{m}}(y_1, y_2)$ and $W= V(h) \in z^1 (\mathbb{A}^r|D_{\un{m}}, 2)$. Then, $\rho \partial (W) = 0$. It induces a homomorphism $\rho: \CH^1 (\mathbb{A}^r |D_{\un{m}} , 1) \to k$.
\end{lem}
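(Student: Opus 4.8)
The plan is to reduce everything to an explicit residue computation on the four codimension-one faces of $W$, and to exhibit the vanishing of the alternating sum as a symmetry cancellation forced by \lemref{lem:y degree bound}. By the reduction carried out just before the statement, $\rho$ of any face of $W$ depends only on the leading term $g_{\un{m}}$, since $Ev_{(t_1=\cdots=t_r=0)}$ annihilates the higher-order part $\hslash$; thus I may work throughout with $h=1-t_1\cdots t_r g_{\un{m}}(y_1,y_2)$ as in the statement. First I would invoke \lemref{lem:y degree bound} to write $g_{\un{m}}$ as a bi-affine polynomial $g_{\un{m}}=a+by_1+cy_2+dy_1y_2$ with $a,b,c,d\in k$; this degree bound is the structural input that makes the cancellation possible.

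Next I would compute the four faces $\partial_i^\epsilon W=V(h|_{y_i=\epsilon})$, keeping careful track of the relabeling of the surviving cube coordinate: after $\partial_1^\epsilon$ the coordinate $y_2$ becomes the variable on which $\rho$ takes its residue, while after $\partial_2^\epsilon$ the coordinate $y_1$ survives. Reading off the $g$-parts gives, respectively, $a+cy_2$ and $(a+b)+(c+d)y_2$ for $i=1$, and $a+by_1$ and $(a+c)+(b+d)y_1$ for $i=2$. Applying $\rho$ amounts to feeding these affine-linear polynomials into ${\rm res}_{\infty}$. The two facts I would use are that ${\rm res}_{\infty}$ is $k$-linear and that it annihilates constants; consequently $\rho$ of each face is proportional to the coefficient of the surviving variable, and the constant terms of the four faces are individually killed. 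Substituting into $\partial=\sum_{i=1}^2(-1)^i(\partial_i^0-\partial_i^1)=-\partial_1^0+\partial_1^1+\partial_2^0-\partial_2^1$ then gives $\rho\partial(W)$ proportional to $-c+(c+d)+b-(b+d)=0$: the mixed coefficient $d=\partial_{y_1}\partial_{y_2}g_{\un{m}}$ enters with opposite signs through the two coordinate directions and cancels, while the pure constant parts are already gone.

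It remains to record the descent to homology. Since $\rho$ is $k$-linear and the value of $\rho$ on a face is computed by the single formula ${\rm res}_{\infty}\,Ev_{(t_1=\cdots=t_r=0)}\big((1-h|_{y_i=\epsilon})/(t_1\cdots t_r)\big)$, which is additive over the irreducible components of a reducible face, the computation above shows $\rho\circ\partial=0$ on all of $z^1(\mathbb{A}^r|D_{\un{m}},2)$. Hence $\rho$ vanishes on every boundary, so its restriction to cycles factors through $\CH^1(\mathbb{A}^r|D_{\un{m}},1)=\ker\partial/\im\partial$, giving the asserted homomorphism. I expect the main obstacle to be bookkeeping rather than depth: one must fix the coordinate relabeling under the face maps exactly and, crucially, use that the residue vanishes on constants---otherwise the constant-term contributions would survive and obstruct the cancellation. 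It is precisely the degree bound of \lemref{lem:y degree bound} that confines $g_{\un{m}}$ to the bi-affine range in which the symmetry of the cross term together with the vanishing of constant residues suffices.
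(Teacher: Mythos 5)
Your proof is correct and follows essentially the same route as the paper: reduce via Lemma \ref{lem:y degree bound} to a bi-affine $g_{\un{m}}$, compute the four faces with the coordinate relabeling, and observe that $\rho$ extracts the linear coefficient so the alternating sum $-c+(c+d)+b-(b+d)$ cancels, exactly matching the paper's computation up to a cosmetic renaming of the coefficients ($a+by_1+cy_2+dy_1y_2$ versus the paper's $ay_1y_2+by_1+cy_2+d$). Your added remarks on additivity of $\rho$ over components of a reducible face and on the vanishing of constant residues make explicit two points the paper leaves implicit, but they do not change the argument.
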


\begin{proof}By Lemma \ref{lem:y degree bound}, we must have $\deg_{y_i} g_{\un{m}} \leq 1$ for $i=1, 2$. Thus, the most general form of $g_{\un{m}}(y_1, y_2)$ is $g_{\un{m}}(y_1, y_2) = a y_1 y_2 + b y_1 + c y_2 + d,$ for some $a, b, c, d \in k$. By the identical method used in Lemma \ref{lem:y degree bound}, this cycle is admissible. 

Let's compute $\rho (\partial_i ^j W)$ for each $i = 1, 2$ and $j= 0, 1$. To compute the faces, note that for $\partial_1 ^0 (W)$, we intersect $(y_1 = 0)$ with $W$. This gives $h|_{y_1 = 0} = 1- t_1 \cdots t_r (cy_2 + d)$, and rename the coordinate $y_2$ by $y_1$. This gives $\partial_1 ^0 (W) = [V( 1- t_1 \cdots t_r ( cy_1 + d))]$. By the same method, we obtain, $\partial_1 ^1 (W) = [V( 1- t_1 \cdots t_r (  (a+ c) y_1 + b+d))]$, $\partial _2 ^0 (W) = [V( 1- t_1 \cdots t_r ( b y_1 + d))]$, and $\partial_2 ^1 (W) = [V (1 - t_1 \cdots t_r (  (a+b) y_1 + c+d))]$. Thus, $\rho (\partial W) = c - (a+c) - b + (a+b) = 0.$ The second part follows from the above discussion.
\end{proof}

\begin{thm}\label{thm:codim 1 n=1}Let $r \geq 2$ and let $\un{m} = (1,\cdots, 1)$. Then, $\rho: \CH^1 (\mathbb{A}^r|D_{\un{m}}, 1) \to k$ is surjective. In particular, $\CH^1 (\mathbb{A}^r |D_{\un{m}}, 1) \not = 0$.
\end{thm}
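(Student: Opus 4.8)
The plan is to use the homomorphism $\rho\colon \CH^1(\mathbb{A}^r|D_{\un{m}},1)\to k$ produced in Lemma~\ref{lem:rho recip} and to prove surjectivity by exhibiting, for each prescribed $\lambda\in k$, an explicit admissible cocycle $Z_\lambda\in z^1(\mathbb{A}^r|D_{\un{m}},1)$ with $\partial Z_\lambda=0$ and $\rho(Z_\lambda)=\lambda$; once every $\lambda$ is hit, $\rho$ is onto, and since $k\neq 0$ this forces $\CH^1(\mathbb{A}^r|D_{\un{m}},1)\neq 0$. First I would record the parametrization of the cycles we are allowed to use: by Lemma~\ref{lem:divisor present} every irreducible admissible $Z$ is cut out by $1-t_1\cdots t_r\,g$ with $g\in k[t_1,\dots,t_r][y_1]$, and by Lemma~\ref{lem:y degree bound} we may take $g=y_1\,u(t)+v(t)$ linear in $y_1$; then $\rho(Z)$ is the coefficient $u(0,\dots,0)$ extracted by $\mathrm{res}_{y_1=\infty}$, exactly as verified on the four boundary faces in Lemma~\ref{lem:rho recip}.

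Next I would reduce to $r=2$: given a candidate curve $C_1\subset\mathbb{A}^2\times\square$ realizing the residue $\lambda$, one sets $C=C_1\times(c_3,\dots,c_r)$ with fixed $c_i\in k^{\times}$ and checks that freezing the extra coordinates preserves both admissibility and the residue, exactly in the style of the specialization arguments in Theorems~\ref{thm:0 vanishing-additive} and~\ref{thm:0 vanishing-2}. The search for $Z_\lambda$ itself would then take place in $z^1(\mathbb{A}^2|D_{(1,1)},1)$: I would look for a $\mathbb{Z}$-combination of cycles $V(1-t_1t_2\,g_j)$, $g_j=y_1u_j+v_j$, whose $y_1$-coefficients at the origin sum to $\lambda$ while the two restrictions $\partial_1^{0}Z_\lambda$ and $\partial_1^{1}Z_\lambda$, which are the divisors $\sum_j n_j\,V(1-t_1t_2\,v_j)$ and $\sum_j n_j\,V(1-t_1t_2\,(u_j+v_j))$ on $\mathbb{A}^2$, agree as cycles. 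The essential device is that these restricted defining polynomials need not be irreducible; their factorizations $1-t_1t_2\,p=\prod_i(1-t_1t_2\,p_i)^{e_i}$ let distinct values of $p$ produce overlapping collections of prime divisors, so one can try to force the two faces to cancel componentwise even when the underlying polynomials differ.

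The main obstacle is precisely this last step: making $\partial Z_\lambda=0$ hold at the level of cycles while keeping $\rho(Z_\lambda)=\lambda\neq 0$. A naive single-equation cocycle cannot work, since equality of the two faces as prime divisors would force $v_j=u_j+v_j$ and hence $\lambda=0$; the construction must genuinely exploit reducibility of the face restrictions and the reciprocity encoded in Lemma~\ref{lem:rho recip}, so that the spurious face components pair off in $z^1(\mathbb{A}^2|D_{(1,1)},0)$ while the residue survives. I expect the bulk of the work to lie in writing down the correct polynomial data $\{(n_j,u_j,v_j)\}$ and checking, by a direct factorization computation, that the $y_1=0$ and $y_1=1$ divisors coincide; the residue computation $\rho(Z_\lambda)=\lambda$ is then immediate from the coefficient description of $\rho$. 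Having produced such $Z_\lambda$ for every $\lambda\in k$, surjectivity of $\rho$, and therefore the asserted nonvanishing, follow at once.
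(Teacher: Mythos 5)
There is a genuine gap here: you never produce the cycle $Z_\lambda$, and exhibiting such a cycle is the entire content of the theorem. Your setup of $\rho$ via Lemmas~\ref{lem:divisor present}, \ref{lem:y degree bound} and \ref{lem:rho recip} matches the paper, but you then explicitly defer ``writing down the correct polynomial data $\{(n_j,u_j,v_j)\}$'' and the face-cancellation check to future work; that is a statement of the problem, not a proof. Moreover, the obstacle your multi-component factorization scheme is designed to circumvent is self-inflicted: your claim that a single-equation cocycle cannot work ``since equality of the two faces as prime divisors would force $v_j=u_j+v_j$'' treats both faces as nonempty divisors that must match componentwise, and overlooks the possibility that a face is empty or dies in the quotient complex. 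The paper's proof is precisely the single-equation construction you rule out: for each $a\in k$ it takes $Z_a=V(1-a\,t_1\cdots t_r\,y_1)$, i.e.\ $u=a$, $v=0$ in your notation. Then $\partial_1^0(Z_a)=[V(1)]=[\emptyset]=0$, the other face $\partial_1^1(Z_a)=[V(1-a\,t_1\cdots t_r)]$ is disposed of as a degenerate cycle, hence zero, the modulus condition is verified exactly as in the middle of the proof of Theorem~\ref{thm:codim 1 n=0}, and $\rho(Z_a)=a$ by definition. So the search you describe as ``the bulk of the work'' is both unexecuted and aimed at a difficulty the paper's two-line construction sidesteps.

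A secondary but concrete error is your reduction to $r=2$: setting $C=C_1\times(c_3,\dots,c_r)$ turns a divisor $C_1\subset \mathbb{A}^2\times\square$ into a closed subscheme of codimension $r-1$ in $\mathbb{A}^r\times\square$, so it does not define an element of $z^1(\mathbb{A}^r|D_{\un{m}},1)$ at all; the ``cross with a point'' device belongs to the zero-cycle arguments of Theorems~\ref{thm:0 vanishing-additive} and \ref{thm:0 vanishing-2}, where the codimension bookkeeping is different. The codimension-preserving alternative, crossing with $\mathbb{A}^{r-2}$, destroys the modulus condition instead: by Lemma~\ref{lem:divisor present} an admissible divisor must be cut out by a polynomial of the form $1-t_1\cdots t_r\,g$, whereas the defining polynomial of $V(1-t_1t_2\,g)\times\mathbb{A}^{r-2}$ is congruent to $1$ only modulo $t_1t_2$. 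No reduction is needed in any case, since the paper's cycle $Z_a$ is written directly in $\mathbb{A}^r$, uniformly in $r\ge 2$.
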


\begin{proof}By Lemma \ref{lem:rho recip}, it is enough to show that $\rho$ is surjective. For any $a \in k$, consider the cycle $Z_a = V (1 - t_1 \cdots t_r ay_1)$. One checks $Z_a$ has modulus $\un{m}$, exactly as we did in the middle of the proof of Theorem \ref{thm:codim 1 n=0}. Here $\partial_1 ^0 (Z_a) = [V ( 1) ] = [\emptyset] = 0$, while $\partial_1 ^1 (Z_a) = [V (1- t_1 \cdots t_r)]$ is a degenerate cycle, which is $0$. So $Z_a$ intersects all faces properly and $\partial (Z_a) = 0$, i.e., it represents a class in $\CH^1 (\mathbb{A}^r|D_{\un{m}}, 1)$. 
As $\rho (Z_a) = a$ by definition, we conclude that $\rho$ is surjective.
\end{proof}

We do not know whether $\rho$ is injective, nor what $\CH^1 (\mathbb{A}^r |D_{\un{m}}, n)$ is when $n \geq 1$. 

\vspace*{.5cm}

\noindent\emph{Acknowledgments} JP thanks Moritz Kerz, Kay R\"ulling, and Shuji Saito for some inspiring conversations. 
JP thanks Juya for helps at home and Damy for being born during the work.  The authors are grateful fo the editor and the referee of Mathematical Research Letters for their numerous helps in improving the quality of this article.

During this research, he was partially supported by the National Research Foundation of Korea (NRF) grant (No. 2013042157) and Korea Institute for Advanced Study (KIAS) grant, both funded by the Korean government (MSIP), and TJ Park Junior Faculty Fellowship funded by POSCO TJ Park Foundation. AK thanks the mathematics department of KAIST for invitation in August 2014, where part of this work was done.


\end{document}